\newcommand{\Aut}{\mathop{\mathrm{Aut}}\nolimits}
\newcommand{\End}{\mathop{\mathrm{End}}\nolimits}
\newcommand{\Hom}{\mathop{\mathrm{Hom}}\nolimits}
\newcommand{\Sy}{\mathop{\mathfrak{S}}\nolimits}
\newtheorem{theorem}{Theorem}[section] 
\newtheorem{conjecture}[theorem]{Conjecture}
\newtheorem{corollary}[theorem]{Corollary}
\newtheorem{lemma}[theorem]{Lemma}
\newtheorem{proposition}[theorem]{Proposition}
\theoremstyle{remark} 
\newtheorem{example}[theorem]{Example}
\theoremstyle{remark} 
\newtheorem{remark}[theorem]{Remark}
\begin{document} 
\title{An analogue of Hilbert's Theorem 90 for infinite symmetric groups} 
\author{M.Rovinsky} 
\address{National Research University Higher School of Economics, 
AG Laboratory, HSE, 6 Usacheva str., Moscow, Russia, 119048 
\& Institute for Information Transmission Problems of Russian 
Academy of Sciences}
\email{marat@mccme.ru}
\date{}
\begin{abstract} Let $K$ be a field and $G$ be a group of its automorphisms. 

If $G$ is precompact then $K$ is a generator of the category of {\sl smooth} (i.e. with 
open stabilizers) $K$-{\sl semilinear} representations of $G$, cf. Proposition \ref{Satz1}. 

There are non-semisimple smooth semilinear representations of $G$ over $K$ if $G$ is not precompact. 

In this note the smooth semilinear representations of the group $\Sy_{\Psi}$ of all 
permutations of an infinite set $\Psi$ are studied. Let $k$ be a field and $k(\Psi)$ 
be the field freely generated over $k$ by the set $\Psi$ (endowed with the natural 
$\Sy_{\Psi}$-action). One of principal results describes the Gabriel spectrum of 
the category of smooth $k(\Psi)$-semilinear representations of $\Sy_{\Psi}$. 

It is also shown, in particular, that (i) for any smooth $\Sy_{\Psi}$-field $K$ any 
smooth finitely generated $K$-semilinear representation of $\Sy_{\Psi}$ is noetherian, 
(ii) for any $\Sy_{\Psi}$-invariant subfield $K$ in the field $k(\Psi)$, the object $k(\Psi)$ 
is an injective cogenerator of the category of smooth $K$-semilinear representations of $\Sy_{\Psi}$, 
(iii) if $K\subset k(\Psi)$ is the subfield of rational homogeneous functions of degree 
0 then there is a one-dimensional $K$-semilinear representation of $\Sy_{\Psi}$, whose 
integral tensor powers form a system of injective cogenerators of the category of smooth 
$K$-semilinear representations of $\Sy_{\Psi}$, 
(iv) if $K\subset k(\Psi)$ is the subfield generated over $k$ by $x-y$ for all 
$x,y\in\Psi$ then there is a unique isomorphism class of indecomposable smooth 
$K$-semilinear representations of $\Sy_{\Psi}$ of each given finite length. 
\end{abstract} 

\maketitle 
\section{Introduction} 
\subsection{Goals} Let $G$ be a group of permutations of a set $C$. Then the group $G$ is endowed 
with the topology, whose base is given by the translates of the pointwise stabilizers 
of the finite subsets in $C$. From now on, $C$ is a field and $G$ consists of field 
automorphisms. We are interested in continuous $G$-actions on discrete sets (i.e., with 
open stabilizers), called {\sl smooth} in what follows. These $G$-sets will be vector spaces 
over $G$-invariant subfields $K\subseteq C$, while the $G$-actions will be {\sl semilinear}. 

\subsection{Motivation} \label{motiv} The problem of describing certain irreducible smooth 
semilinear representations of $G$ in $C$-vector spaces arises in some algebro-geometric 
problems, where $C$ is an algebraically closed extension of infinite transcendence degree 
of an algebraically closed field $k$ of characteristic 0 and $G$ is the group of all 
automorphisms of the field $C$ leaving $k$ fixed. This is briefly explained in \S\ref{prehist}. 

Fix a transcendence base $\Psi$ of $C$ over $k$ and denote by $k(\Psi)$ the subfield of 
$C$ generated over $k$ by the set $\Psi$. Then taking invariants of the Galois group of the 
extension $C|k(\Psi)$ is a faithful and exact functor from the category of smooth semilinear 
representations of $G$ over $C$ to the category of smooth semilinear representations 
over $k(\Psi)$ of the group $\Sy_{\Psi}$ of all permutations of the set $\Psi$. 

Then the problem splits into two parts: (i) to describe the smooth $k(\Psi)$-semilinear 
representations of $\Sy_{\Psi}$ and (ii) to relate smooth $k(\Psi)$-semilinear representations 
of $\Sy_{\Psi}$ and `interesting' smooth $k$-linear and $C$-semilinear representations of $G$. 
We study (i) in detail and give some remarks on (ii). 

\subsection{Basic notation} 
For an abelian group $A$ and a set $S$ we denote by $A\langle S\rangle$ the abelian group, 
which is the direct sum of copies of $A$ indexed by $S$, i.e., the elements of 
$A\langle S\rangle$ are the finite formal sums $\sum_{i=1}^Na_i[s_i]$ for all integer $N\ge 0$, 
$a_i\in A$, $s_i\in S$, with addition defined obviously. In some cases, $A\langle S\rangle$ 
will be endowed with an additional structure, e.g., of a module, a ring, etc. 

If $A$ is an associative ring endowed with an action of a group $G$ respecting both 
operations in $A$, we consider $A\langle G\rangle$ as a unital associative ring with 
the unique multiplication such that $(a[g])(b[h])=ab^g[gh]$, where we write $a^h$ for 
the result of applying of $h\in G$ to $a\in A$. 

The left $A\langle G\rangle$-modules are 
also called $A$-{\sl semilinear representations} of $G$ if $A$ is a field. 

\subsection{Hilbert's theorem 90} 
Let now $K$ be a field and $G$ be a group of its automorphisms. Then Speiser's 
generalization of Hilbert's theorem 90, cf. \cite[Satz 1]{Speiser}, or 
\cite[Prop. 3, p.159]{CL}, can be interpreted and slightly generalized further as follows. 
\begin{proposition} \label{Satz1} The category of smooth $K$-semilinear 
representations of $G$ admits a simple generator if and only if $G$ is 
precompact, i.e., any open subgroup of $G$ is of finite index. \end{proposition}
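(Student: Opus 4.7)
The plan is to prove the two implications separately, using Speiser's Hilbert 90 for the forward implication and a direct construction together with Schur's lemma for the converse.

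For the forward implication, suppose $G$ is precompact. The representation $K$ is tautologically simple since it admits no proper nonzero $K$-subspaces. To show that $K$ generates it suffices to check that $V = K\cdot V^G$ for every smooth semilinear representation $V$. First I would replace $G$ by its closure in $\Aut(K)$ under the pointwise-convergence topology, which does not affect the category of smooth semilinear representations and renders $G$ profinite with $K/K^G$ Galois of Galois group $G$ (using that precompactness implies every element of $K$ has a finite, necessarily separable orbit over $K^G$). Then for any $v\in V$ the open stabilizer $G_v$ contains an open normal subgroup $N$ of finite index, and the Galois correspondence makes $K^N/K^G$ a finite Galois extension with group $G/N$. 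Now $V^N$ is a $K^N$-semilinear representation of the finite group $G/N$, so by the classical Speiser theorem $V^N \cong K^N \otimes_{K^G} V^G$; in particular $v \in V^N \subseteq K\cdot V^G$.

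For the converse, suppose $G$ is not precompact, so there is an open subgroup $H \subseteq G$ of infinite index. Consider $V := K\langle G/H\rangle$ with the semilinear action $g\cdot(k[aH]) = g(k)[gaH]$. Smoothness is clear (the stabilizer of $[aH]$ is the open subgroup $aHa^{-1}$), and $V^G = 0$: a nonzero invariant would have finite nonempty support stable under the transitive $G$-action on the infinite set $G/H$, which is impossible. Hence $\Hom(K,V) = V^G = 0$, so $K$ fails to be a generator. To rule out any simple generator, note that $K$ is always a simple object; if $S$ were a simple generator then the epimorphism $S^{(I)} \twoheadrightarrow K$ provided by the generating property would have some nonzero component $S \to K$, which by Schur's lemma in an abelian category is an isomorphism. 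Hence $S \cong K$, and $K$ would have to generate, a contradiction.

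The main obstacle is the reduction to Speiser's theorem in the forward direction: one needs that passage to the profinite closure preserves the category of smooth semilinear representations and that the Galois correspondence realizes $K^N/K^G$ as Galois with group exactly $G/N$ (rather than a quotient thereof). Once this is set up, Speiser's classical theorem applies verbatim to the $K^N$-semilinear representation $V^N$ of the finite group $G/N$, and the remainder of the argument is bookkeeping.
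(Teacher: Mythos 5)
Your argument is correct in substance and runs on the same two engines as the paper's proof -- Speiser's finite Hilbert 90 and the module $K\langle G/H\rangle$ for an open subgroup $H$ of infinite index -- but the reductions differ in detail. In the non-precompact direction you argue directly that $\Hom(K,K\langle G/H\rangle)=K\langle G/H\rangle^G=0$ and then identify any simple generator with the simple object $K$ via Schur's lemma; the paper instead observes that a simple generator would force every object to be a direct sum of copies of $K$, while $K\langle G/U\rangle$ has the simple quotient $K$ but no $G$-invariants. Your version is, if anything, the cleaner bookkeeping. In the precompact direction the paper never leaves $G$: for $v\in V$ it takes $H$ to be the intersection of the finitely many conjugates of the open stabilizer of $v$, so that $H$ is open of finite index, and applies the finite case to the $K^H$-semilinear representation $V^H$ of $G/H$; you instead pass to the closure $\overline{G}=\mathrm{Gal}(K/K^G)$ and invoke infinite Galois theory. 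Each route has one unproved hinge. Yours is the claim you flag: that restriction along the dense inclusion $G\subseteq\overline{G}$ is an equivalence on smooth semilinear representations. This is true and routine: precompactness makes every $G$-orbit in $K$ finite, so $K/K^G$ is separable normal and $G$ is dense in $\mathrm{Gal}(K/K^G)$ (its image in each finite Galois quotient has fixed field $K^G$, hence is everything); a smooth $G$-action then extends uniquely to $\overline{G}$ by continuity, setting $\overline{g}\,v:=gv$ for any $g\in G$ agreeing with $\overline{g}$ on a finite set whose pointwise stabilizer fixes $v$ and on whatever scalars occur, and one checks well-definedness, semilinearity and that invariants and morphisms are unchanged. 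The paper's hinge, left equally implicit, is that $G/H$ acts faithfully on $K^H$ (equivalently $G\cap\overline{H}=H$ for $H$ open), which is what licenses the independence-of-characters step; your closure formalism absorbs this into the Galois correspondence for the open normal subgroup $N$. So I assess your proposal as correct modulo the continuity lemma you yourself isolate, which is a genuine but entirely fillable verification rather than a flaw in the idea, and as a mild variant of, rather than a departure from, the paper's proof.
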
 
\begin{proof} If the category of smooth $K$-semilinear representations 
of $G$ admits a simple generator then any object is a direct sum of copies 
of $K$, in particular, semisimple. This implies that $G$ is precompact, since otherwise 
there is an open subgroup $U\subset G$ of infinite index, while the representation 
$K\langle G/U\rangle$ of $G$ has no non-zero vectors fixed by $G$, unlike its simple quotient $K$. 
(For a $G$-set $S$ we consider $K\langle S\rangle$ as a $K$-vector space with the diagonal $G$-action.) 

If $G$ is finite then \cite[Satz 1]{Speiser}, appropriately reformulated, implies that 
any $K$-semilinear representation of $G$ is a sum of copies of $K$. Namely, with 
$k:=K^G$, the field extension $K|k$ is finite, so the natural $G$-action on $K$ gives 
rise to a $k$-algebra homomorphism $K\langle G\rangle\to\mathrm{End}_k(K)$, which is (a) 
surjective by Jacobson's density theorem and (b) injective by independence of characters. 
Then any $K\langle G\rangle$-module is isomorphic to a direct sum of copies of $K$. 

For arbitrary precompact $G$, a smooth $K$-semilinear representation $V$ of $G$ 
and $v\in V$ the intersection $H$ of all conjugates of the stabilizer of $v$ in 
$G$ is of finite index. Thus, $v$ is contained in the $K^H$-semilinear 
representation $V^H$ of the group $G/H$. As $G/H$ is finite, 
$V^H=(V^H)^{G/H}\otimes_{(K^H)^{G/H}}K^H=V^G\otimes_{K^G}K^H$, i.e., $v$ is contained 
in a subrepresentation isomorphic to a direct sum of copies of $K$. \end{proof} 

\subsection{Results} \label{NotaF} Let $k$ be a field and $\{A_i\}_{i\in\Psi}$ be a collection 
of unital associative $k$-algebras, indexed by a set $\Psi$. Denote by $\bigotimes_{k,~i\in\Psi}A_i$ 
the coproduct of $k$-algebras, i.e., $\bigotimes_{k,~i\in\Psi}A_i:=
\mathop{\underrightarrow{\lim}}\limits_{I\subset\Psi}\bigotimes_{k,~i\in I}A_i$ is inductive 
limit of the system of the tensor products $\bigotimes_{k,~i\in I}A_i$ over $k$ for all finite 
subsets $I\subset\Psi$, consisting of finite linear combinations of tensor products 
of elements in $A_i$, almost all equal to 1. 

Let $F$ be a field and $k$ be a subfield algebraically closed in $F$. Denote by 
$F_{\Psi}=F_{k,\Psi}$ the field of fractions of the $k$-algebra $\bigotimes_{k,~i\in\Psi}F$. 
The group $\Sy_{\Psi}$ of all permutations of the set $\Psi$ acts on $\bigotimes_{k,~i\in\Psi}F$ 
by permuting the tensor factors, and thus, it acts on the field $F_{\Psi}$. 

For instance, if $F=k(x)$ is the field of rational functions in one variable then 
$F_{\Psi}=k(\Psi)$ is the field of rational functions over $k$ in the variables enumerated 
by the set $\Psi$, while the group $\Sy_{\Psi}$ acts on $k(\Psi)$ by permuting the variables. 

\begin{theorem} \label{Gabriel-spectrum} Let $\Psi$ be a set, $F$ be a field 
and $k$ be a subfield algebraically closed in $F$. 

Assume that transcendence degree of the field extension $F|k$ is at most continuum. 

Let $K\subseteq F_{\Psi}$ be an $\Sy_{\Psi}$-invariant subfield. Then the object $F_{\Psi}$ is 
an injective cogenerator of the category of smooth $K\langle\Sy_{\Psi}\rangle$-modules. 

In particular, {\rm (i)} any smooth $K\langle\Sy_{\Psi}\rangle$-module 
can be embedded into a direct product of copies of $F_{\Psi}$; {\rm (ii)} any smooth 
$F_{\Psi}\langle\Sy_{\Psi}\rangle$-module 
of finite length is isomorphic to a direct sum of copies of $F_{\Psi}$. 

Gabriel spectrum of the category of smooth $F_{\Psi}\langle\Sy_{\Psi}\rangle$-modules, 
i.e., the set of isomorphism classes of indecomposable injectives, 
consists of $F_{\Psi}\langle\binom{\Psi}{s}\rangle$ for all integer $s\ge 0$, 
where $\binom{\Psi}{s}$ denotes the set of all subsets of $\Psi$ of cardinality $s$. The closure of 
$F_{\Psi}\langle\binom{\Psi}{s}\rangle$ is the set 
$\{F_{\Psi},F_{\Psi}\langle\Psi\rangle,\dots,F_{\Psi}\langle\binom{\Psi}{s}\rangle\}$. \end{theorem}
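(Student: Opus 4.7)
The plan has two stages: establishing that $F_\Psi$ is an injective cogenerator of smooth $K\langle\Sy_\Psi\rangle$-modules, and (in the case $K=F_\Psi$) identifying the indecomposable injectives. For cogeneration, given a nonzero $v$ in a smooth module $M$, smoothness provides a finite $I\subset\Psi$ with $\Sy_{\Psi\setminus I}$ fixing $v$, so the submodule $K\langle\Sy_\Psi\rangle v$ is a quotient of the induced module $K\langle\Sy_\Psi/\Sy_{\Psi\setminus I}\rangle$. A $K\langle\Sy_\Psi\rangle$-linear map from this induced module to $F_\Psi$ amounts to specifying an $\Sy_{\Psi\setminus I}$-invariant element of $F_\Psi$, and $(F_\Psi)^{\Sy_{\Psi\setminus I}}$ contains the large subfield $F_I$. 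I would exploit the hypothesis $\mathrm{trdeg}(F|k)\le\mathfrak{c}$ to make a generic choice so that the resulting map on $K\langle\Sy_\Psi\rangle v$ is nonzero on $v$, and then invoke injectivity to extend this to a morphism $M\to F_\Psi$ separating $v$.

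For the injectivity, a Baer/Zorn reduction brings the extension problem down to a one-element smooth enlargement $N\subset N+K\langle\Sy_\Psi\rangle v$ with $v$ fixed by $\Sy_{\Psi\setminus I}$. This becomes a system of semilinear constraints for an element of $F_\Psi$ under the finite group $\Sy_I$ acting on $(F_\Psi)^{\Sy_{\Psi\setminus I}}$. Here Proposition~\ref{Satz1}, applied to the precompact (indeed finite) group $\Sy_I$, trivializes the $\Sy_I$-cohomological obstruction, while the tensor-product presentation $F_\Psi=\mathrm{Frac}\bigotimes_{k,\,i\in\Psi}F$ together with the cardinality bound on $\mathrm{trdeg}(F|k)$ supplies the required lift in $F_\Psi$. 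Assertions (i) and (ii) then follow formally: (i) is the cogenerator statement, and (ii) combines the embedding of a finite length module into some $F_\Psi^n$ with the finite-dimensional form of Hilbert~90 to split off each simple subquotient.

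For the Gabriel spectrum, each $Q_s:=F_\Psi\langle\binom{\Psi}{s}\rangle$ is smooth (stabilizers of the generators $[T]$ are open), inherits injectivity from $F_\Psi$ via its presentation as an induced module, and is indecomposable because $\End_{F_\Psi\langle\Sy_\Psi\rangle}(Q_s)$ identifies with the field $(F_\Psi)^{\Sy_T\times\Sy_{\Psi\setminus T}}$ for any fixed $T\in\binom{\Psi}{s}$. Exhaustiveness is obtained by assigning to every simple smooth $F_\Psi\langle\Sy_\Psi\rangle$-module $L$ the minimal $s$ such that $L^{\Sy_{\Psi\setminus T}}\neq 0$ for some $T$ with $|T|=s$, and showing that the injective hull of $L$ is $Q_s$. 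The closure relation $\overline{\{Q_s\}}=\{Q_0,\ldots,Q_s\}$ is then verified by exhibiting each $Q_t$ with $t\le s$ as a subquotient of $Q_s$ via the canonical maps relating $\binom{\Psi}{t}$ and $\binom{\Psi}{s}$, and by checking that no such relation holds in the opposite direction.

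The main obstacle is the injectivity of $F_\Psi$: threading a finite-group Hilbert~90 argument through the infinite ambient permutation group is delicate, and it is here that the tensor-product origin of $F_\Psi$ and the transcendence-degree bound become essential. A secondary subtlety is the exhaustiveness claim in the Gabriel spectrum, where ruling out ``exotic'' indecomposable injectives requires a structural analysis of simple smooth $F_\Psi\langle\Sy_\Psi\rangle$-modules via their minimal isotypic subsets.
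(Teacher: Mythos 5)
Your high-level outline correctly identifies the three tasks: injectivity of $F_\Psi$, cogeneration, and identification of the indecomposable injectives $Q_s:=F_\Psi\langle\binom{\Psi}{s}\rangle$. However, the two central technical steps are asserted rather than established, and the devices you propose in their place do not work.

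For the injectivity of $F_\Psi$, the Baer/Zorn reduction to a single-vector enlargement $N\subset N+K\langle\Sy_\Psi\rangle v$ with $v$ fixed by $\Sy_{\Psi\setminus I}$ is fine, but it does \emph{not} reduce to a problem over the finite group $\Sy_I$: the one new orbit $\Sy_\Psi v$ is infinite and its $K$-span is infinite-dimensional, so Proposition~\ref{Satz1} for the finite group is not the bottleneck. The genuine difficulty is to produce a $K\langle\Sy_{\Psi|J}\rangle$-linear retraction from the resulting module onto $F_{\Psi\setminus J}$, and that is where the paper spends its effort. Concretely, the paper first proves a local structure theorem (Proposition~\ref{local-structure}) showing that every finitely generated smooth module, restricted to $\Sy_{\Psi|J}$ for suitable finite $J$, becomes a finite direct sum of modules $K\langle\binom{\Psi\smallsetminus J}{s}\rangle$; then (Proposition~\ref{injectivity-K}) it builds the retraction explicitly by embedding $F_J$ (using the transcendence-degree bound and MacLane's theorem) into the Hahn power series field $\overline{k}((t^{\mathbb Q}))$ and taking the constant-term map. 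Your phrase ``the tensor-product presentation together with the cardinality bound supplies the required lift'' names the hypothesis that makes the construction possible without supplying the construction. As written, this is a gap, not a delicate point.

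The injectivity of $Q_s$ for $s\ge 1$ is also unaddressed. Inducing from an injective over a smaller group does not yield an injective in the ambient smooth category, so the claim that $Q_s$ ``inherits injectivity from $F_\Psi$ via its presentation as an induced module'' has no proof behind it. The paper's argument (Corollary~\ref{some-injective}) is different and essential: one passes to the $\Sy_\Psi$-invariant subfield $K\subset k(\Psi)$ generated by the \emph{squares} of elements of $\Psi$, observes that $k(\Psi)\cong\bigoplus_{s\ge 0}K\langle\binom{\Psi}{s}\rangle$ as $K\langle\Sy_\Psi\rangle$-modules, and then uses injectivity of $k(\Psi)$ over \emph{that} $K$ (which is precisely why Theorem~\ref{Gabriel-spectrum} is stated for an arbitrary invariant subfield $K\subseteq F_\Psi$) to conclude each summand is injective. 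This step is not recoverable from your outline, and similarly for the exhaustiveness of the list $\{Q_s\}$, the paper needs a concrete embedding of every finitely generated smooth module into a direct sum of $Q_s$'s, carried out by induction on $|J|$ via partial-fraction decomposition and Lemma~\ref{finite-index-split}; your proposed route through ``minimal isotypic subsets'' is plausible in spirit but would still have to reprove these reductions.
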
 
Theorem~\ref{Gabriel-spectrum}, can be considered as an example of a field $K$ 
and a non-precompact group $G$ of its automorphisms such that the smooth {\sl irreducible} 
$K$-semilinear representations of $G$ admit an explicit description. In Theorems 
\ref{smooth-simple-degree-0} and \ref{unip} two more examples are presented with the same 
group $G=\Sy_{\Psi}$ showing that description depends crucially on the field $K$. 

For each $d\in\mathbb Z$, let $V_d\subseteq k(\Psi)$ be the subset of homogeneous 
rational functions of degree $d$, so $V_0$ is an $\Sy_{\Psi}$-invariant subfield and 
$V_d\subseteq k(\Psi)$ is an $\Sy_{\Psi}$-invariant one-dimensional $V_0$-vector subspace. 
\begin{theorem} \label{smooth-simple-degree-0} The objects $V_d$ for 
$d\in\mathbb Z$ form a system of injective cogenerators of the category of smooth 
$V_0$-semilinear representations of $\Sy_{\Psi}$, i.e., any smooth $V_0$-semilinear 
representation $V$ of $\Sy_{\Psi}$ can be embedded into a direct product of cartesian 
powers of $V_d$. In particular, any smooth $V_0$-semilinear representation of $\Sy_{\Psi}$ 
of finite length is isomorphic to $\bigoplus_{d\in\mathbb Z}V_d^{m(d)}$ for a unique, 
if the set $\Psi$ is infinite, function $m:\mathbb Z\to\mathbb Z_{\ge 0}$ with finite 
support. \end{theorem}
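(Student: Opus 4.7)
The strategy is to derive the theorem from Theorem~\ref{Gabriel-spectrum} by constructing an explicit $V_0$-linear $\Sy_{\Psi}$-equivariant embedding $\phi:k(\Psi)\hookrightarrow\prod_{n\in\mathbb{Z}}V_n$ along which each $V_d$ splits off as a retract. Applied with $F=k(x)$ (so $F_{\Psi}=k(\Psi)$) and $K=V_0$, Theorem~\ref{Gabriel-spectrum} already gives that $k(\Psi)$ is an injective cogenerator of the category of smooth $V_0$-semilinear representations of $\Sy_{\Psi}$. Given $\phi$, the composite $\pi_d\circ\phi$ will be a retraction of $V_d\hookrightarrow k(\Psi)$, making $V_d$ a direct summand of the injective $k(\Psi)$ and hence itself injective; and any smooth $W$ embeds in a product of copies of $k(\Psi)$, so composing coordinate-wise with $\phi$ produces the embedding of $W$ into a product of cartesian powers of the $V_d$.

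To construct $\phi$, introduce an auxiliary variable $t$ and the $k$-algebra homomorphism $k[\Psi]\to k[\Psi][t]\subset k(\Psi)((t))$ sending $x\mapsto tx$ for every $x\in\Psi$. A nonzero polynomial has nonzero image (its lowest homogeneous component contributes the leading $t$-term), so this extends uniquely to a field homomorphism $\bar\phi:k(\Psi)\to k(\Psi)((t))$. The map $\bar\phi$ is $\Sy_{\Psi}$-equivariant (both sides agree on the generators $x\in\Psi$, with $\Sy_{\Psi}$ fixing $t$) and fixes $V_0$ elementwise (every element of $V_0$ is $p/q$ with $p,q$ homogeneous of equal degree $e$, whence $\bar\phi(p/q)=(t^ep)/(t^eq)=p/q$). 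Writing $\bar\phi(f)=\sum_n a_n(f)\,t^n$ with $a_n(f)\in k(\Psi)$, the crucial point is that $a_n(f)\in V_n$. To see this, consider for each $\lambda\in k^{\times}$ the automorphism $\mu_{\lambda}$ of $k(\Psi)$ acting by $x\mapsto\lambda x$ (extended trivially in $t$ to $k(\Psi)((t))$) and the automorphism $\nu_{\lambda}$ of $k(\Psi)((t))$ fixing $k(\Psi)$ and sending $t\mapsto\lambda t$; a direct check on generators gives $\mu_{\lambda}\circ\bar\phi=\bar\phi\circ\mu_{\lambda}=\nu_{\lambda}\circ\bar\phi$, so comparing coefficients of $t^n$ forces $\mu_{\lambda}(a_n)=\lambda^n a_n$ and hence $a_n\in V_n$. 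Setting $\phi(f):=(a_n(f))_n$ yields the desired injection, and since $\bar\phi(f)=t^df$ for $f\in V_d$, indeed $\pi_d\circ\phi$ restricts to the identity on $V_d$.

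For the finite-length assertion, the preceding cogeneration shows that any simple smooth $V_0$-semilinear representation $S$ admits a nonzero (hence injective) morphism to some $V_d$, which is an isomorphism since both sides are simple; so $\{V_d\}_{d\in\mathbb{Z}}$ exhausts the simples. For infinite $\Psi$ the $V_d$ are pairwise non-isomorphic, because $V_d\cong V_{d'}$ would produce a nonzero $\Sy_{\Psi}$-invariant element of $V_{d-d'}$ with $d\ne d'$, yet a nonzero homogeneous rational function of nonzero degree involves only finitely many variables and cannot be invariant under swapping one of them with an unused element of $\Psi$. If $W$ has finite length, its socle is a finite sum $\bigoplus_d V_d^{m(d)}$ of injective simples and hence is itself injective; it splits off as a direct summand, and any complement has finite length with trivial socle in $W$ and therefore vanishes. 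Uniqueness of $m(d)=\dim_{V_0}\Hom(V_d,W)$ for infinite $\Psi$ is automatic.

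The principal technical obstacle is the construction and analysis of $\phi$: verifying that $\bar\phi$ extends to the entire field $k(\Psi)$ and, above all, pinning down its $t^n$-coefficients as precisely the homogeneous pieces $V_n$ via the $\mu_{\lambda}$-equivariance identity. Once $\phi$ is in place, injectivity of each $V_d$, cogeneration by $\{V_d\}_{d\in\mathbb{Z}}$, the classification of simples, and the decomposition of finite-length objects all follow by standard abelian-category reasoning.
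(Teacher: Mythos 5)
Your route is essentially the paper's. The paper obtains the embedding $k(\Psi)\hookrightarrow\prod_{d\in\mathbb Z}V_d$ from the $\Sy_{\Psi}$-invariant degree valuation trivial on $V_0^{\times}$ (completion $V_0((x^{-1}))$), while your substitution $x\mapsto tx$ into $k(\Psi)((t))$ is the same device built from the opposite degree valuation; in both versions the coefficient maps $k(\Psi)\to V_d$ split the inclusions $V_d\subset k(\Psi)$, so each $V_d$ becomes a direct summand of the injective cogenerator $k(\Psi)$ supplied by Theorem \ref{smooth-simple} (equivalently Theorem \ref{Gabriel-spectrum} with $F=k(x)$, $K=V_0$), and your handling of the simples, the multiplicities and the finite-length decomposition matches the statement being proved.

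One step as written would fail for finite $k$: from $\mu_{\lambda}(a_n)=\lambda^n a_n$ for all $\lambda\in k^{\times}$ you cannot conclude $a_n\in V_n$ when $k^{\times}$ is finite (over $\mathbb F_2$ the hypothesis is vacuous), and the theorem is stated for an arbitrary field $k$. The conclusion is nevertheless true for every $k$ and needs no scaling argument: writing $f=p/q$ and $\bar\phi(f)=\bigl(\sum_i p_i t^i\bigr)\bigl(\sum_j q_j t^j\bigr)^{-1}$ with $p_i,q_j$ the homogeneous components, factor out the lowest term $q_{j_0}t^{j_0}$ and expand the inverse as a geometric series; in every factor the coefficient of $t^r$ is homogeneous of degree $r$, hence the same holds for the product, i.e. $a_n(f)\in V_n$. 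With that replacement your proof is complete and coincides in substance with the paper's.
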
 

\begin{remark} \label{irred-contained-irred-semilin} Let $K$ be a field and 
$G$ be a group of automorphisms of $K$. Let $k\subseteq K^G$ be a subfield. 
Then any smooth irreducible representation $W$ of $G$ over $k$ can 
be embedded into a smooth irreducible $K$-semilinear representation of $G$. 
Indeed, $W$ can be embedded into any irreducible quotient of the $K$-semilinear 
representation $W\otimes_kK$. 
\end{remark} 

\begin{corollary} In the above notation, any 
smooth irreducible representation of $\Sy_{\Psi}$ over a field $k$ can be embedded 
into the $V_0$-semilinear representation $V_d\subset k(\Psi)$ for some integer $d$. 
\end{corollary}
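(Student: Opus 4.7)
The plan is to combine Remark~\ref{irred-contained-irred-semilin} with Theorem~\ref{smooth-simple-degree-0} applied to the field $K := V_0\subseteq k(\Psi)$. First, I would check the hypothesis of the remark: the elements of $k$ are constants in $k(\Psi)$, so $k$ sits inside $V_0$ as homogeneous rational functions of degree $0$, and every element of $k$ is fixed by $\Sy_{\Psi}$. Thus $k\subseteq V_0^{\Sy_{\Psi}}$, which is what the remark requires.

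Next, given a smooth irreducible $k$-linear representation $W$ of $\Sy_{\Psi}$, Remark~\ref{irred-contained-irred-semilin} produces an embedding of $W$ into some smooth irreducible $V_0$-semilinear representation $U$ of $\Sy_{\Psi}$.

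It then remains to identify the smooth irreducible $V_0$-semilinear representations. Any smooth irreducible representation has finite length (length one), so by Theorem~\ref{smooth-simple-degree-0} it decomposes as $\bigoplus_{d\in\mathbb Z}V_d^{m(d)}$; being irreducible, this decomposition reduces to a single summand $V_d$. Each $V_d$ is a one-dimensional $V_0$-vector space and so is automatically irreducible as a $V_0$-semilinear $\Sy_{\Psi}$-representation. Consequently $U\cong V_d$ for some $d\in\mathbb Z$, giving an embedding $W\hookrightarrow V_d$.

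There is essentially no obstacle here; the argument is a direct application of the cited results. The only tiny point to verify is the identification of the irreducibles in the classification of Theorem~\ref{smooth-simple-degree-0} with the $V_d$ themselves, which is immediate from one-dimensionality over $V_0$.
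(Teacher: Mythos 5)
Your argument is correct and is exactly the paper's proof: the paper disposes of this corollary in one line by citing Remark~\ref{irred-contained-irred-semilin} together with Theorem~\ref{smooth-simple-degree-0}, and your write-up simply fills in the same steps (checking $k\subseteq V_0^{\Sy_{\Psi}}$, embedding $W$ into an irreducible $V_0$-semilinear representation, and identifying the irreducibles, i.e.\ the length-one objects, with the $V_d$ via the theorem's classification).
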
 
This follows from Remark \ref{irred-contained-irred-semilin} and 
Theorem \ref{smooth-simple-degree-0}. \qed 

\begin{theorem} \label{unip} Suppose that the set $\Psi$ is infinite. Let 
$K\subset k(\Psi)$ be the subfield generated over $k$ by the rational functions 
$x-y$ for all $x,y\in\Psi$, so the group $\Sy_{\Psi}$ acts naturally on the 
fields $k(\Psi)$ and $K$. Fix some $x\in\Psi$. Then {\rm (i)} the injective envelope of $K$ 
in the category of smooth $K$-semilinear representations of $\Sy_{\Psi}$ is isomorphic to 
$K[x]$; {\rm (ii)} object $K[x]$ of is an indecomposable cogenerator 
of the category of smooth $K$-semilinear representations of $\Sy_{\Psi}$; {\rm (iii)} for 
any integer $N\ge 1$ there exists a unique isomorphism class of smooth $K$-semilinear 
indecomposable representations of $\Sy_{\Psi}$ of length $N$. \end{theorem}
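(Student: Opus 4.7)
The plan is to identify $K[x]\subset k(\Psi)$ as the injective envelope of $K$ and as a uniserial cogenerator, from which the three assertions follow directly.

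First I would pin down the structure of $K[x]$ as a smooth $K\langle\Sy_{\Psi}\rangle$-module. Since $\sigma(x)\in\Psi$ and $\sigma(x)-x\in K$ for every $\sigma\in\Sy_{\Psi}$, the group acts on $K[x]$ by $K$-algebra translations $x\mapsto x+c_{\sigma}$ with $c_{\sigma}\in K$; smoothness is immediate because any polynomial involves only finitely many elements of $\Psi$. The degree filtration $F_{n}:=K[x]_{\le n}$ is then by sub-$K\langle\Sy_{\Psi}\rangle$-modules, and the leading-coefficient map induces an isomorphism $F_{n}/F_{n-1}\cong K$. Next I would show $K[x]$ is uniserial with subobject lattice $\{F_{n}\}_{n\ge 0}$ by the following degree-reduction trick: for a non-zero submodule $V\subseteq K[x]$ and a non-zero $P\in V$ of minimal degree $n$ with leading coefficient $a_{n}$, the element $a_{n}\cdot\sigma(P)-\sigma(a_{n})\cdot P\in V$ has degree strictly less than $n$, so it must vanish; this forces $P/a_{n}\in k(\Psi)^{\Sy_{\Psi}}=k$, whence $P\in K$, $n=0$ and $V\supseteq K$. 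Combined with simplicity of each $F_{n}/F_{n-1}\cong K$, an iteration of this argument on the graded pieces shows $V=F_{m}$ for the maximal $m$ with $F_{m}\cap V\ne 0$, which also yields essentiality of $K\hookrightarrow K[x]$.

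The third step uses Theorem~\ref{Gabriel-spectrum}: since $k(\Psi)$ is an injective cogenerator of the category, the injective envelope of $K$ computed inside $k(\Psi)$ contains $K[x]$. To force equality, I would argue that for any $f\in k(\Psi)\setminus K[x]$ (necessarily of positive denominator degree in $x$) the $K\langle\Sy_{\Psi}\rangle$-submodule it generates modulo $K[x]$ intersects the image of $K$ only in zero, ruling out a strictly larger essential extension; this proves (i). For (ii), $K[x]$ is indecomposable as the injective envelope of a simple, and $K$ is the unique simple smooth $K$-semilinear representation of $\Sy_{\Psi}$ (any such simple embeds into $k(\Psi)$ by cogeneration, lies in $K[x]$ by the essentiality argument, and coincides with $K$ by uniseriality), so $K[x]$ cogenerates the whole category via socle embeddings extended by injectivity. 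Assertion (iii) is then immediate: a finite-length indecomposable has socle $K$, embeds into its injective envelope $K[x]$, and the length-$N$ subobjects of $K[x]$ are exactly the $F_{N-1}$.

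The most delicate point I expect to encounter is the uniseriality of $K[x]$: the basic degree-reduction trick only directly delivers $V\cap K\ne 0$ and has to be bootstrapped carefully to yield $V=F_{n}$, and in positive characteristic the identity $(x+c)^{p}=x^{p}+c^{p}$ threatens to produce spurious submodules like $K+Kx^{p}$ that fall outside the filtration, so closing the induction in full generality requires either a characteristic hypothesis or a finer cocycle analysis that rules out such submodules. The verification that $K[x]$ is maximal essential inside $k(\Psi)$, rather than being contained in a larger essential extension, is the second step where a careful case analysis on denominators will be needed.
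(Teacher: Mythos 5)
Your degree-reduction argument --- pick a minimal-degree $P\in V$ and kill $a_n\sigma(P)-\sigma(a_n)P$ --- is essentially the paper's (which normalizes $P$ to be monic and uses $Q-\sigma Q$ directly), and your filtration $F_n=\bigoplus_{j\le n}x^jK$ is exactly the representative produced at the end of the paper's proof. Where you are too vague is in showing $K[x]$ is injective and in locating all simples inside it. The paper writes the partial-fraction decomposition $k(\Psi)=K[x]\oplus\bigoplus_R\bigoplus_{m\ge1}V^{(m)}_R$ explicitly as a direct sum of $K\langle\Sy_\Psi\rangle$-submodules; this at once gives injectivity of $K[x]$ (direct summand of the injective cogenerator $k(\Psi)$) and, after filtering each $V^{(m)}_R$ by $V^{(j,m)}_R$ and identifying $V^{(0,m)}_R\cong K\langle\Sy_\Psi/\mathrm{Stab}_Q\rangle$, lets Lemma~\ref{no-simple-submod} rule out simple submodules outside $K[x]$. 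Your clause ``lies in $K[x]$ by the essentiality argument'' is not an argument: $K[x]$ is \emph{not} essential in $k(\Psi)$ (there is a complementary summand), and the explicit decomposition is what replaces it. Your ``case analysis on denominators'' is the right instinct; you need to actually carry it out.

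The gap you flag in (iii) is real and not a side issue. In characteristic $p$ the module $K+Kx^p$ is an indecomposable submodule of $K[x]$ of length $2$ lying outside your degree filtration (since $\sigma(x^p)=x^p+c_\sigma^p$), so $K[x]$ is not uniserial and the chain $\{F_n\}$ does not exhaust the finite-length subobjects. The paper does not prove or use uniseriality. Once one knows every composition factor is $K$ (so finite length means finite $K$-dimension $N$), the paper trivializes the cocycle over $k(\Psi)$ via Theorem~\ref{smooth-simple}, writes $f_\sigma=\Phi(I)\Phi(\sigma I)^{-1}$ with $\Phi\in\mathrm{GL}_Nk(I)$, and exploits the invariance of $f_\sigma$ under the translations $T_\lambda$ (which fix $K$ pointwise) to produce a homomorphism of algebraic $k$-groups $\mathbb G_{a,k}\to\mathrm{GL}_{N,k}$ whose block decomposition classifies the indecomposables. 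That is exactly the ``finer cocycle analysis'' you mention as a fallback; in the paper it is the argument. Note, finally, that the characteristic-$p$ worry survives this translation: the two one-parameter unipotent subgroups of $\mathrm{GL}_{2,k}$ with upper-right entry $\lambda$ and $\lambda^p$ respectively are not conjugate, and they arise from the non-isomorphic length-$2$ modules $K+Kx$ and $K+Kx^p$; so the uniqueness in (iii) appears to presuppose a restriction on the characteristic of $k$ that neither your argument nor the paper's makes explicit, and you are right to regard this as the delicate point.
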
 

Finally, Theorem~\ref{locally-noetherian} asserts that, for any left noetherian associative 
ring $A$ endowed with a smooth $\Sy_{\Psi}$-action, the category of smooth left 
$A\langle\Sy_{\Psi}\rangle$-modules is locally noetherian. 

\section{Open subgroups and permutation modules} 
For any set $\Psi$ and a subset $T\subseteq\Psi$, we denote by 
$\Sy_{\Psi|T}$ the pointwise stabilizer of $T$ in the group $\Sy_{\Psi}$. 
Let $\Sy_{\Psi,T}:=\Sy_{\Psi\smallsetminus T}\times\Sy_T$ denote the 
group of all permutations of $\Psi$ preserving $T$ (in other words, 
the setwise stabilizer of $T$ in the group $\Sy_{\Psi}$, or 
equivalently, the normalizer of $\Sy_{\Psi|T}$ in $\Sy_{\Psi}$). 

\begin{lemma} \label{strong-generation} For any pair of finite subsets 
$T_1,T_2\subset\Psi$ the subgroups $\Sy_{\Psi|T_1}$ and $\Sy_{\Psi|T_2}$ 
generate the subgroup $\Sy_{\Psi|T_1\cap T_2}$. \end{lemma}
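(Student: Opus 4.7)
The inclusion $\langle \Sy_{\Psi|T_1}, \Sy_{\Psi|T_2}\rangle \subseteq \Sy_{\Psi|T_1\cap T_2}$ is immediate, since each generator of the left-hand side fixes the (smaller) subset $T_1\cap T_2$ pointwise. The content of the lemma is the reverse inclusion, and my plan is to establish it by writing an arbitrary $\sigma \in \Sy_{\Psi|T_1\cap T_2}$ as a three-fold product $\sigma = \alpha\beta\gamma$ with $\alpha,\gamma\in\Sy_{\Psi|T_2}$ and $\beta\in\Sy_{\Psi|T_1}$.

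Since $\sigma$ already fixes $T_1\cap T_2$, only its behaviour on $A := T_1\setminus T_2$ has to be corrected by the sandwich. The plan is to use $\gamma^{-1} \in \Sy_{\Psi|T_2}$ to relocate $A$ into a region where $\sigma$ sends it outside $T_2$, and then use $\alpha^{-1} \in \Sy_{\Psi|T_2}$ to bring the image back. Concretely, exploiting that $T_2 \cup \sigma^{-1}(T_2)$ is a finite subset of an infinite $\Psi$, I would pick $A' \subseteq \Psi \setminus (T_2 \cup \sigma^{-1}(T_2))$ of cardinality $|A|$ and extend any bijection $A \to A'$ to an element $\gamma^{-1} \in \Sy_{\Psi|T_2}$; then $\sigma(A') \subseteq \Psi \setminus T_2$, so the prescribed assignment $\sigma(\gamma^{-1}(a))\mapsto a$ (for $a \in A$) extends to some $\alpha^{-1} \in \Sy_{\Psi|T_2}$. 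With these choices, $\beta := \alpha^{-1}\sigma\gamma^{-1}$ fixes $T_1\cap T_2$ pointwise (because each of the three factors does) and fixes $A$ pointwise (by the construction of $\alpha$ and $\gamma$), so $\beta \in \Sy_{\Psi|T_1}$ and $\sigma = \alpha\beta\gamma$ lies in the group generated by the two stabilizers.

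The only real obstacle is the set-theoretic bookkeeping that selects $A'$: one needs enough room inside $\Psi\setminus T_2$ to host both a copy of $A$ and to keep it disjoint from $\sigma^{-1}(T_2)$, which is exactly what the infinitude of $\Psi$ provides. Once $A'$ is chosen, extending the two prescribed bijections to global elements of $\Sy_{\Psi|T_2}$ and verifying the two fixed-point properties of $\beta$ are straightforward from the definitions.
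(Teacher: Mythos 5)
Your argument is correct. It differs in structure from the paper's: the paper first establishes the exact set identity $\Sy_{\Psi|T_1}\Sy_{\Psi|T_2}=\Xi:=\{g\in\Sy_{\Psi|T_1\cap T_2}\mid g(T_2)\cap T_1=T_1\cap T_2\}$, by identifying $\Xi/\Sy_{\Psi|T_2}$ with the set of embeddings $T_2\smallsetminus(T_1\cap T_2)\hookrightarrow\Psi\smallsetminus T_1$ and observing this is a single $\Sy_{\Psi|T_1}$-orbit, and then (implicitly) reduces a general $\sigma\in\Sy_{\Psi|T_1\cap T_2}$ to an element of $\Xi$ by one further multiplication; you instead bypass the description of the two-fold product set and directly exhibit, for every $\sigma\in\Sy_{\Psi|T_1\cap T_2}$, a sandwich factorization $\sigma=\alpha\beta\gamma\in\Sy_{\Psi|T_2}\,\Sy_{\Psi|T_1}\,\Sy_{\Psi|T_2}$ by parking $A=T_1\smallsetminus T_2$ outside $T_2\cup\sigma^{-1}(T_2)$ and pulling its $\sigma$-image back. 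The underlying mechanism is the same in both cases (there is room in the infinite set $\Psi$ to move a finite set clear of another finite set, and any bijection of finite subsets of $\Psi\smallsetminus T_2$ extends to an element of $\Sy_{\Psi|T_2}$), but your version is more explicit and self-contained, while the paper's yields the finer structural fact $\Sy_{\Psi|T_1}\Sy_{\Psi|T_2}=\Xi$, i.e.\ a precise description of the two-fold product, at the cost of leaving the last reduction step to the reader. Note that both arguments use that $\Psi$ is infinite (a standing assumption in the paper; for small finite $\Psi$ the statement can fail, e.g.\ $\Psi=\{1,2\}$, $T_1=\{1\}$, $T_2=\{2\}$), and you invoke this correctly when choosing $A'$.
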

\begin{proof} Let us show first that 
$\Sy_{\Psi|T_1}\Sy_{\Psi|T_2}=\{g\in\Sy_{\Psi|T_1\cap T_2}~|~
g(T_2)\cap T_1=T_1\cap T_2\}=:\Xi$. The inclusion $\subseteq$ is trivial. 
On the other hand, \[\Xi/\Sy_{\Psi|T_2}=\{\text{embeddings 
$T_2\smallsetminus(T_1\cap T_2)\hookrightarrow\Psi\smallsetminus T_1$}\},\] 
while the latter is an $\Sy_{\Psi|T_1}$-orbit. \end{proof} 

\begin{lemma} \label{open-subgrps-descr} For any open subgroup $U$ of 
$\Sy_{\Psi}$ there exists a unique subset $T\subset\Psi$ such that 
$\Sy_{\Psi|T}\subseteq U$ and the following equivalent conditions 
hold: {\rm (a)} $T$ is minimal; {\rm (b)} $\Sy_{\Psi|T}$ is normal 
in $U$; {\rm (c)} $\Sy_{\Psi|T}$ is of finite index in $U$. 
In particular, {\rm (i)} such $T$ is finite, {\rm (ii)} the 
open subgroups of $\Sy_{\Psi}$ correspond bijectively to 
the pairs $(T,H)$ consisting of a finite subset $T\subset\Psi$ 
and a subgroup $H\subseteq\mathrm{Aut}(T)$ under $(T,H)\mapsto
\{g\in\Sy_{\Psi,T}~|~\text{{\rm restriction of $g$ to $T$ 
belongs to $H$}}\}$. \end{lemma}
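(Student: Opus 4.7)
My plan is to extract a minimal $T$ from openness of $U$ using Lemma~\ref{strong-generation}, check the equivalences (a)$\Leftrightarrow$(b)$\Leftrightarrow$(c), and then read off (ii). Since $U$ is open, some finite $T_0\subset\Psi$ satisfies $\Sy_{\Psi|T_0}\subseteq U$; let $\mathcal{F}:=\{T\subset\Psi\text{ finite}:\Sy_{\Psi|T}\subseteq U\}$. Lemma~\ref{strong-generation} makes $\mathcal{F}$ closed under pairwise, hence finite, intersection. Since $\{T\cap T_0:T\in\mathcal{F}\}$ is a finite subfamily of the power set of $T_0$ closed under intersection, its total intersection $T_{\min}$ lies in $\mathcal{F}$ and coincides with $\bigcap_{T\in\mathcal{F}}T$, yielding (a), uniqueness, and finiteness (i).

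For the equivalence of conditions, for (a)$\Rightarrow$(b) I would note that for $u\in U$ one has $u\Sy_{\Psi|T}u^{-1}=\Sy_{\Psi|u(T)}\subseteq U$, so Lemma~\ref{strong-generation} gives $\Sy_{\Psi|T\cap u(T)}\subseteq U$, and minimality together with finiteness forces $u(T)=T$; hence $U\subseteq\Sy_{\Psi,T}$, inside which $\Sy_{\Psi|T}$ is normal. Conversely, (b) gives $\Sy_{\Psi|u(T)}=u\Sy_{\Psi|T}u^{-1}=\Sy_{\Psi|T}$ for every $u\in U$, so $u(T)=T$ (the pointwise stabilizer of a finite subset of $\Psi$ recovers the subset as its fixed locus when $|\Psi\smallsetminus T|\geq 2$), giving $U\subseteq\Sy_{\Psi,T}$; if some $T'\subsetneq T$ lay in $\mathcal{F}$, the transposition $(x\,y)$ for $x\in T\smallsetminus T'$ and $y\in\Psi\smallsetminus T$ would lie in $\Sy_{\Psi|T'}\subseteq U$ but not preserve $T$, contradicting $U\subseteq\Sy_{\Psi,T}$. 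For (a)$\Rightarrow$(c), the restriction map $\Sy_{\Psi,T}\to\Sy_T$ has kernel $\Sy_{\Psi|T}$ and target of order $|T|!$, so $[U:\Sy_{\Psi|T}]$ is finite. For (c)$\Rightarrow$(a), assuming $\Psi$ infinite, a strict $T'\subsetneq T$ in $\mathcal{F}$ would produce an infinite coset space $\Sy_{\Psi|T'}/\Sy_{\Psi|T}\subseteq U/\Sy_{\Psi|T}$ indexed by the $T'$-fixing embeddings $T\hookrightarrow\Psi$, contradicting (c).

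For (ii), given $U$ and its associated $T$, the restriction map $U\hookrightarrow\Sy_{\Psi,T}\to\Aut(T)$ has kernel $\Sy_{\Psi|T}$ and some image $H\subseteq\Aut(T)$, so $U$ is precisely the preimage of $H$; conversely, any pair $(T,H)$ yields an open subgroup whose minimal subset must be $T$ by the same transposition argument used in (b)$\Rightarrow$(a). The main obstacle is purely organizational: correctly deploying Lemma~\ref{strong-generation} to extract minimality, and ensuring the edge case $|\Psi\smallsetminus T|\leq 1$ is either excluded by the infinite-$\Psi$ context of the paper or handled as a degenerate case.
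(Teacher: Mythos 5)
Your proposal is correct and follows essentially the same route as the paper: Lemma~\ref{strong-generation} yields a unique minimal (indeed minimum) finite $T$ with $\Sy_{\Psi|T}\subseteq U$, conjugation by elements of $U$ gives $U\subseteq\Sy_{\Psi,T}$, and (ii) follows by passing to the image in $\Aut(T)$. You merely spell out the implications among (a), (b), (c) more explicitly (the paper handles uniqueness in case (c) via non-commensurability of $\Sy_{\Psi|T}$ and $\Sy_{\Psi|T'}$, which is the same infinite-index observation you use), and your caveat about $|\Psi\smallsetminus T|\le 1$ is moot since $\Psi$ is infinite and $T$ finite in the paper's setting.
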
 
\begin{proof} Any open subgroup $U$ in $\Sy_{\Psi}$ contains 
the subgroup $\Sy_{\Psi|T}$ for a finite subset $T\subset\Psi$. 
Assume that $T$ is chosen to be minimal. If $\sigma\in U$ then 
$U\supseteq\sigma\Sy_{\Psi|T}\sigma^{-1}=\Sy_{\Psi|\sigma(T)}$, and 
therefore, (i) $\sigma(T)$ is also minimal, (ii) $U$ contains the 
subgroup generated by $\Sy_{\Psi|\sigma(T)}$ and $\Sy_{\Psi|T}$. 
By Lemma \ref{strong-generation}, the subgroup generated by 
$\Sy_{\Psi|\sigma(T)}$ and $\Sy_{\Psi|T}$ is $\Sy_{\Psi|T\cap\sigma(T)}$, 
and thus, $U$ contains the subgroup $\Sy_{\Psi|T\cap\sigma(T)}$. 
The minimality of $T$ means that $T=\sigma(T)$, i.e., 
$U\subseteq\Sy_{\Psi,T}$. If $T'\subset\Psi$ is another minimal 
subset such that $\Sy_{\Psi|T'}\subseteq U$ then, by Lemma 
\ref{strong-generation}, $\Sy_{\Psi|T\cap T'}\subseteq U$, so 
$T=T'$, which proves (b) and (the uniqueness in the case) (a). 
It follows from (b) that $\Sy_{\Psi|T}\subseteq U\subseteq\Sy_{\Psi,T}$, 
so $\Sy_{\Psi|T}$ is of finite index in $U$. As the subgroups 
$\Sy_{\Psi|T}$ and $\Sy_{\Psi|T'}$ are not commensurable for 
$T'\neq T$, we get the uniqueness in the case (c). \end{proof} 

\begin{lemma} \label{no-simple-submod} Let $G$ be a group acting on a field 
$K$ and $K'\subseteq K$ be a $G$-invariant subfield such that any simple 
$K'\langle G\rangle$-submodule in $K$ is isomorphic to $K'$. Let $U\subset G$ 
be a subgroup such that an element $g\in G$ acts identically on $K^U$ if and 
only if $g\in U$. Then there are no irreducible $K'$-semilinear subrepresentations 
in $K\langle G/U\rangle$, unless $U$ is of finite index in $G$. If $G$ acts faithfully on $K$ 
and $U$ is of finite index in $G$ then $K\langle G/U\rangle$ is trivial. 

If $G=\Sy_{\Psi}$ and $U\subset\Sy_{\Psi}$ is a proper open subgroup then 
{\rm (i)} index of $U$ in $\Sy_{\Psi}$ is infinite; {\rm (ii)} there 
are no elements in $\Sy_{\Psi}\smallsetminus U$ acting identically on 
$K^U$ if the $\Sy_{\Psi}$-action on $K$ is non-trivial. \end{lemma}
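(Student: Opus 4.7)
The plan is to route the whole argument through the family of \(K\)-linear, \(G\)-equivariant maps \(\pi_c\colon K\langle G/U\rangle\to K\) indexed by \(c\in K^U\) and defined on the canonical basis by \(\pi_c([gU])=c^g\). Well-definedness relies on \(c\in K^U\), and \(G\)-equivariance is a direct check against the diagonal semilinear action; these are essentially the only natural such maps out of \(K\langle G/U\rangle\), so the whole lemma should be readable through the collection \(\{\pi_c\}\).

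The first key step is to show that \(\{\pi_c\}_{c\in K^U}\) is jointly injective on \(K\langle G/U\rangle\). The hypothesis on \(U\) says exactly that distinct cosets \(g_iU\) yield distinct field embeddings \(g_i|_{K^U}\colon K^U\hookrightarrow K\), so Dedekind's theorem on \(K\)-linear independence of characters of \((K^U)^{\times}\) forces any nonzero \(w=\sum a_i[g_iU]\) to have \(\pi_c(w)=\sum a_ic^{g_i}\neq 0\) for some \(c\in K^U\). The first assertion of the lemma unfolds quickly from here: a simple \(K'\langle G\rangle\)-submodule \(W\subseteq K\langle G/U\rangle\) admits a nonzero, hence injective, restriction \(\pi_c|_W\colon W\hookrightarrow K\); the standing hypothesis on \(K\) forces \(W\cong K'\); and then \(W\) is generated by a \(G\)-fixed vector whose finite support is \(G\)-invariant, so transitivity of the \(G\)-action on \(G/U\) forces \([G:U]<\infty\). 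For the second assertion (\(G\) faithful, \(n=[G:U]<\infty\)) I would push the same Dedekind argument a step further to produce \(c_1,\dots,c_n\in K^U\) for which the matrix \((g_jc_i)\) is invertible over \(K\); the tuple \((\pi_{c_i})_i\) is then an isomorphism of \(K\langle G\rangle\)-modules \(K\langle G/U\rangle\xrightarrow{\sim}K^n\), which I read as ``trivial'' in the spirit of Proposition~\ref{Satz1}.

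For the \(\Sy_{\Psi}\) part, (i) is essentially immediate from Lemma~\ref{open-subgrps-descr}: a proper open \(U\) lies between \(\Sy_{\Psi|T}\) and \(\Sy_{\Psi,T}\) for a finite nonempty \(T\subset\Psi\), and \(\Sy_{\Psi}/\Sy_{\Psi,T}\) is the infinite set of \(|T|\)-subsets of \(\Psi\). Part (ii) is where I expect the main obstacle to lie: from the bare assumption that the \(\Sy_{\Psi}\)-action on \(K\) is nontrivial one must exhibit, for each \(g\in\Sy_{\Psi}\smallsetminus U\), an element of \(K^U\) moved by \(g\). My plan is to start from some \(a\in K\) with nontrivial orbit and open stabilizer \(\Sy_{\Psi|T'}\), enlarge \(T'\) to contain \(T\), and symmetrise \(a\) over a finite system of coset representatives of \(\Sy_{\Psi|T'}\) inside \(U\) to land in \(K^U\). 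The real work will be verifying that the resulting invariant still distinguishes \(T\) from \(gT\) in the case \(gT\neq T\), and that it is moved by \(g\) in the remaining case \(gT=T\) with \(g|_T\notin H\); the delicacy comes from the fact that nothing a priori prevents the symmetrisation from collapsing too much of the information about \(T\).
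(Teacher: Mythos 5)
Your treatment of the general part of the lemma is correct and follows the paper's own route: your maps $\pi_c$ are exactly the morphisms $\sum_g b_g[g]\mapsto\sum_g b_gQ^g$ used there, joint injectivity is the same appeal to Artin--Dedekind independence of the characters attached to distinct cosets (distinctness of these characters being precisely the hypothesis on $U$), and your finite-support/transitivity argument just makes explicit the step the paper compresses into ``$K\langle G/U\rangle^G\neq0$ can happen only if $[G:U]<\infty$''. For the second claim your route genuinely differs: the paper passes to the normal core $U'=\bigcap_g gUg^{-1}$, invokes Speiser's Hilbert 90 to get $K^{U'}\langle G/U'\rangle\cong(K^{U'})^{[G:U']}$ and then extends scalars, whereas you choose $c_1,\dots,c_n\in K^U$ with $\det\bigl(c_i^{g_j}\bigr)\neq0$ (which exists by the same independence of characters) and read off an isomorphism $K\langle G/U\rangle\cong K^n$ directly. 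That argument is correct and arguably cleaner: it avoids the reduction to a normal subgroup and uses only the standing hypothesis on $U$, not faithfulness. Part (i) for $\Sy_{\Psi}$ is also fine and is what the paper means by citing Lemma~\ref{open-subgrps-descr}.

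The genuine gap is part (ii), which you leave as a plan and correctly flag as delicate. As written the plan cannot be executed: once $T'\supsetneq T$ the coset space $U/\Sy_{\Psi|T'}$ is infinite (already $[\Sy_{\Psi|T}:\Sy_{\Psi|T'}]=\infty$), so the ``finite system of coset representatives'' you want to symmetrise over does not exist, and an orbit sum is available only when the $U$-orbit of $a$ happens to be finite. Moreover, no argument from smoothness and non-triviality alone can close the gap, because the assertion fails in that generality: take $K=k\bigl(x_e\mid e\in\binom{\Psi}{2}\bigr)$ with the natural (smooth, faithful) $\Sy_{\Psi}$-action and $U=\Sy_{\Psi|\{a,b\}}$; then $K^U=k(x_{\{a,b\}})$, and the transposition $(a\,b)\notin U$ fixes $K^U$ pointwise. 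So (ii) has to be read with the extra hypotheses under which the lemma is actually applied, as in the Example following it: for $U=\Sy_{\Psi,I}$ maximal proper with $K^U\neq K^{\Sy_{\Psi}}$ it is immediate, since the elements acting identically on $K^U$ form a subgroup containing $U$ which cannot be all of $\Sy_{\Psi}$. For comparison, the paper's own proof of (i)--(ii) is only the phrase ``follow from Lemma~\ref{open-subgrps-descr}'', so you are not missing a hidden trick; but your proposal as it stands does not prove (ii), and the symmetrisation idea is not the way to repair it.
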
 

{\sc Example and notation.} Let $G$ be a group acting on a field $K=K'$; 
$U\subset G$ be a maximal proper subgroup. Assume that $K^U\neq K^G=:k$. 
Then we are under assumptions of Lemma \ref{no-simple-submod}. 

The representation $K\langle G/U\rangle$ is highly reducible: any finite-dimensional 
$K^G$-vector subspace $\Xi$ in $K^U$, determines a surjective morphism 
$K\langle G/U\rangle\to\Hom_k(\Xi,K)$, $[g]\mapsto[Q\mapsto Q^g]$, which is surjective, 
since $K^U=\Hom_{K\langle\Sy_{\Psi}\rangle}(K\langle G/U\rangle,K)$ under $Q:[g]\mapsto gQ$. 

More particularly, let $G=\Sy_{\Psi}$. Let $U\subset\Sy_{\Psi}$ be a maximal proper 
subgroup, i.e., $U=\Sy_{\Psi,I}$ for a finite subset $I\subset\Psi$ (so $\Sy_{\Psi}/U$ can 
be identified with the set $\binom{\Psi}{\#I}$). Suppose that $K^{\Sy_{\Psi,I}}\neq k$. 
Then we are under assumptions of Lemma \ref{no-simple-submod}, so there are no 
irreducible $K$-semilinear subrepresentations in $K\langle\binom{\Psi}{\#I}\rangle$. 

\begin{proof} The elements $[g]\in G/U$ can be considered as certain pairwise 
distinct one-dimensional characters $\chi_{[g]}:(K^U)^{\times}\to K^{\times}$. 
By Artin's independence of characters theorem, the characters 
$\chi_{[g]}$ are linearly independent in the $K$-vector space of all functions 
$(K^U)^{\times}\to K^{\times}$, so the morphism $K\langle G/U\rangle\to\prod_{(K^U)^{\times}}K$, 
given by $\sum_gb_g[g]\mapsto(\sum_gb_gf^g)_{f\in(K^U)^{\times}}$, 
is injective. Then, for any non-zero element $\alpha\in K\langle G/U\rangle$, there exists 
an element $Q\in K^U$ such that the morphism $K\langle G/U\rangle\to K$, given by 
$\sum_gb_g[g]\mapsto\sum_gb_gQ^g$, does not vanish on $\alpha$. 
Then $\alpha$ generates a $K'$-semilinear subrepresentation $V$ 
admitting a non-zero morphism to $K$. If $V$ is irreducible then it is 
isomorphic to $K'$, so $V^G\neq 0$. In particular, $K\langle G/U\rangle^G\neq 0$, 
which can happen only if index of $U$ in $G$ is finite. 

If $U$ is of finite index in $G$ set $U'=\cap_{g\in G/U}gUg^{-1}$. This is a normal subgroup 
of finite index. Then $K\langle G/U'\rangle=K\otimes_{K^{U'}}K^{U'}\langle G/U'\rangle$ 
and $K^{U'}\langle G/U'\rangle\cong(K^{U'})^{[G:U']}$ is trivial by Speiser's version  
of Hilbert's theorem 90, so we get $K\langle G/U'\rangle\cong K^{[G:U']}$. 

(i) and (ii) follow from the explicit description 
of open subgroups in Lemma \ref{open-subgrps-descr}. \end{proof} 

\begin{lemma} \label{finite-index-split} Let $K$ be a field, $G$ be a group of 
automorphisms of the field $K$. Let $U\subseteq H\subseteq G$ be open subgroups of $G$. 
Then the natural right $K^H$-vector space structure on $K\langle G/H\rangle$, given by 
$[g]\cdot f=f^g\cdot[g]$, commutes with the natural left $K$-vector space structure. 
If index of $U$ in $H$ is finite then there is a natural isomorphism 
$K\langle G/H\rangle\otimes_{K^H}K^U\stackrel{\sim}{\longrightarrow}K\langle G/U\rangle$, 
$[g]\otimes f\mapsto\sum_{[\xi]\in G/U,~[\xi]\bmod H=[g]}f^{\xi}[\xi]$. \end{lemma}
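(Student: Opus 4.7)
The plan is to verify the compatibility of the two actions directly, and then to prove the isomorphism by reducing to a dimension count combined with Artin's independence of characters. First, the right $K^H$-action $[g]\cdot f=f^g[g]$ is well defined: if $g'=gh$ with $h\in H$ and $f\in K^H$, then $f^{g'}=(f^h)^g=f^g$, so the formula is independent of the coset representative. Commutativity with the left $K$-multiplication is immediate since both act on the scalar coefficient of $[g]$ from opposite sides and $K$ is commutative.

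Next, to see the displayed formula defines a map, I fix coset representatives $\xi_1,\ldots,\xi_n$ of $U$ in $H$; then $\{g\xi_i U\}_i$ enumerates the $U$-cosets inside $gH$. For $f\in K^U$, $f^{g\xi_i u}=f^{g\xi_i}$ for $u\in U$, so each summand depends only on $[g\xi_i]\in G/U$. Replacing $g$ by $gh$ only permutes the set $\{g\xi_i U\}_i$, so the sum is independent of the chosen representative of $[g]\in G/H$; and the $K^H$-balance follows from $c^{g\xi_i}=c^g$ for $c\in K^H$, since $\xi_i\in H$.

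For the isomorphism, both sides split as direct sums indexed by $G/H$: the source as $\bigoplus_{[g]}K[g]\otimes_{K^H}K^U$ and the target as $\bigoplus_{[g]}K\langle gH/U\rangle$, with the map respecting the splitting. Left $G$-translation identifies the $[g]$-summand with the $[e]$-summand, so it suffices to show that the restriction $K\otimes_{K^H}K^U\to K\langle H/U\rangle$, $a\otimes f\mapsto\sum_i af^{\xi_i}[\xi_i]$, is an isomorphism. With a $K^H$-basis $f_1,\ldots,f_n$ of $K^U$, this map is represented by the matrix $(f_j^{\xi_i})\in M_n(K)$, and its non-singularity is the main step. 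I would deduce it by passing to the normal core $U'=\bigcap_{h\in H}hUh^{-1}$, which is of finite index in $H$: standard Galois theory for the finite quotient $H/U'$ acting on $K^{U'}\supseteq K^U$ makes $K^U/K^H$ a separable extension of degree $[H:U]$, shows the $\xi_i$ induce pairwise distinct $K^H$-embeddings $K^U\hookrightarrow K$, and Artin--Dedekind independence of characters then gives the non-vanishing of the determinant. Equal $K$-dimensions $n$ on the two sides complete the proof; alternatively, one can construct the inverse explicitly using a trace-dual basis of $K^U/K^H$.
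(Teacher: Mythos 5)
Your argument is correct and follows essentially the same strategy as the paper's terse proof: both first observe that the map decomposes over $G/H$, reducing to the case $G=H$, and then combine Artin--Dedekind independence of characters with a normal-core argument for the finite-index step. The packaging differs slightly: the paper cites Lemma \ref{no-simple-submod} for the surjectivity of $K\otimes_{K^H}K^U\to K\langle H/U\rangle$ (whose proof passes to $U'=\bigcap_{h}hUh^{-1}$ and invokes Speiser's version of Hilbert 90 for the finite quotient acting on $K^{U'}$), whereas you prove bijectivity in one stroke via nonsingularity of the matrix $(f_j^{\xi_i})$. Since the ingredients are the same, this is a more self-contained phrasing rather than a genuinely different route.

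One point you pass over with the phrase ``standard Galois theory,'' and which the paper also leaves implicit, is that the whole argument hinges on the restriction map $H/U\to\Aut_{K^H}(K^U)$ being injective, i.e., on $\{h\in H~|~h|_{K^U}=\mathrm{id}\}=U$; equivalently, that $H/U'$ acts faithfully on $K^{U'}$. This is precisely the hypothesis ``$g$ acts identically on $K^U$ iff $g\in U$'' of Lemma \ref{no-simple-submod}, silently assumed when the paper cites it. Without it, your claim that $K^U/K^H$ has degree $[H:U]$ can fail. For instance, take $K=\mathbb{Q}(t)$, $G=H=\langle t\mapsto t+1\rangle\cong\mathbb{Z}$ (the topology is discrete since the stabilizer of $t$ is trivial, so every subgroup is open) and $U$ the index-$2$ subgroup: then $K^H=K^U=\mathbb{Q}$, so $K\otimes_{K^H}K^U\cong K$ has $K$-dimension one while $K\langle H/U\rangle$ has $K$-dimension two. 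The needed faithfulness does hold for open subgroups of $\Sy_{\Psi}$ acting on the fields considered in the paper (by Lemma \ref{open-subgrps-descr}, $U$ and $H$ share the same pointwise-stabilizer core $\Sy_{\Psi|T}$, and finite Galois theory for $\Aut(T)$ acting on $K^{\Sy_{\Psi|T}}$ gives it), which is the only case actually used; but you should state this assumption explicitly rather than fold it into ``standard Galois theory.''
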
 
\begin{proof} The injectivity follows from Artin's independence of characters theorem.  
To check the surjectivity, it suffices to check the surjectivity of the restriction 
$K\otimes_{K^H}K^U\stackrel{\sim}{\longrightarrow}K\langle H/U\rangle$, but this is 
Lemma \ref{no-simple-submod}. \end{proof} 

\begin{lemma} \label{semilin-gener} Let $K$ be a field, $G$ be 
a group of automorphisms of the field $K$. Let $B$ be such a system of 
open subgroups of $G$ that any open subgroup contains a subgroup 
conjugated, for some $H\in B$, to an open subgroup of finite index 
in $H$. Then the objects $K\langle G/H\rangle$ for all $H\in B$ form a system 
of generators of the category of smooth $K$-semilinear 
representations of $G$. \end{lemma}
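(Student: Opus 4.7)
The plan is to show that every element $v$ of an arbitrary smooth $K$-semilinear representation $V$ of $G$ lies in the image of a morphism to $V$ from a finite direct sum of copies of some $K\langle G/H\rangle$ with $H\in B$. Assembling such morphisms over all $v\in V$ then yields an epimorphism onto $V$ from a direct sum of objects of the listed form, proving that the family is a system of generators.

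First I would exploit smoothness: the stabilizer $U_v:=\{g\in G~|~gv=v\}$ is open, so the hypothesis on $B$ supplies elements $g_0\in G$ and $H\in B$ together with an open subgroup $H'\subseteq H$ of finite index such that $g_0H'g_0^{-1}\subseteq U_v$. Equivalently, $H'$ fixes the vector $v':=g_0^{-1}v$, so the assignment $[\xi]\mapsto\xi v'$ defines a morphism $\phi:K\langle G/H'\rangle\to V$ of smooth $K$-semilinear representations whose image contains $v=\phi([g_0])$. It therefore remains to realize $K\langle G/H'\rangle$ as a finite direct sum of copies of $K\langle G/H\rangle$.

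For this I would invoke Lemma~\ref{finite-index-split}, which provides a $K$-semilinear $G$-equivariant isomorphism $K\langle G/H\rangle\otimes_{K^H}K^{H'}\stackrel{\sim}{\longrightarrow}K\langle G/H'\rangle$. Since $[H:H']$ is finite, the extension $K^{H'}|K^H$ is finite of degree $[H:H']$ (applying Proposition~\ref{Satz1} to the finite quotient $H/H_0$ acting on $K^{H_0}$, where $H_0\subseteq H'$ is the normal core of $H'$ in $H$), so any $K^H$-basis of $K^{H'}$ induces an isomorphism $K\langle G/H'\rangle\cong K\langle G/H\rangle^{[H:H']}$ in the category. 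Composing $\phi$ with this identification exhibits $v$ as the image of an element of a finite direct sum of copies of $K\langle G/H\rangle$ with $H\in B$, completing the reduction. The sole genuine input is Lemma~\ref{finite-index-split}; the hypothesis on $B$ has been tailored precisely to match its finite-index assumption, so no further obstacle arises beyond routine bookkeeping with stabilizers.
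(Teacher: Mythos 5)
Your argument is correct, but it is not the route the paper takes, so a comparison is in order. The paper also starts from the open stabilizer and a conjugate finite-index subgroup $H'\subseteq H$ fixing a translate $v'$ of $v$, but it then works inside $V$ itself: the $H$-orbit of $v'$ is finite, its $K$-linear span is a semilinear representation of $H$ which is shown to be trivial (a Speiser/Hilbert 90 argument as in the precompact case of Proposition~\ref{Satz1}), so $v'$ lies in the $K$-span of $V^H$, and each $H$-fixed vector supplies a morphism $K\langle G/H\rangle\to V$, $[g]\mapsto gw$; a finite cartesian power of $K\langle G/H\rangle$ then hits $v'$, hence $v$. You instead map $K\langle G/H'\rangle$ onto the cyclic submodule generated by $v'$ and outsource the Hilbert 90 input to Lemma~\ref{finite-index-split}, identifying $K\langle G/H'\rangle$ with $K\langle G/H\rangle\otimes_{K^H}K^{H'}$ and hence with a finite direct sum of copies of $K\langle G/H\rangle$ (for this last step one also uses that the right $K^H$-structure commutes with the $G$-action, which is immediate from the formula $[g]\cdot f=f^g[g]$). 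The two proofs rest on the same mechanism --- the triviality of finite-index permutation modules, which in the paper enters through Lemma~\ref{no-simple-submod} and in your write-up through the surjectivity part of Lemma~\ref{finite-index-split} --- so yours buys modularity and a shorter write-up at the cost of importing that lemma in the full generality of the present statement, while the paper's version re-derives only the special instance it needs from the finite orbit of $v'$. One small inaccuracy: $[K^{H'}:K^H]$ need not equal $[H:H']$, since the finite quotient $H/H_0$ need not act faithfully on $K^{H_0}$; but this is harmless, as all you need is finiteness of the degree (Artin's lemma gives $[K^{H'}:K^H]\le[H:H_0]<\infty$), and indeed even an infinite direct sum of copies of $K\langle G/H\rangle$ would suffice for the generation statement.
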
 
\begin{proof} Let $V$ be a smooth semilinear representation of $G$. Then the 
stabilizer of any vector $v\in V$ is open, i.e., the stabilizer of some vector $v'$ 
in the $G$-orbit of $v$ admits a subgroup commensurable with some $H\in B$. 
The $K$-linear envelope of the (finite) $H$-orbit of $v'$ is a smooth $K$-semilinear 
representation of $H$, so it is trivial, i.e., $v'$ belongs to the $K$-linear 
envelope of the $K^H$-vector subspace fixed by $H$. As a consequence, there is 
a morphism from a finite cartesian power of $K\langle G/H\rangle$ to $V$, containing $v'$ 
(and therefore, containing $v$ as well) in the image. \end{proof} 

\begin{example} \label{Omega-binom} Let $K$ be a field endowed 
with a smooth faithful $\Sy_{\Psi}$-action. 
Let $S\subseteq\mathbb N$ be an infinite set of positive integers. Then (i) the assumptions of 
Lemma \ref{semilin-gener} hold if $B$ is the set of subgroups $\Sy_{\Psi,T}$ for a collection of 
subsets $T\subset\Psi$ with cardinality in $S$, (ii) $K\langle\binom{\Psi}{N}\rangle$ is isomorphic to 
$K\langle\Sy_{\Psi}/\Sy_{\Psi,T}\rangle$ for any $T$ of order $N$. 

Thus, the objects $K\langle\binom{\Psi}{N}\rangle$ for $N\in S$ form a system of generators 
of the category of smooth $K$-semilinear representations of $\Sy_{\Psi}$. One has 
$K\langle\binom{\Psi}{N}\rangle\cong\bigwedge_K^NK\langle\Psi\rangle\cong\Omega^N_{K|k}$, 
$[\{s_1,\dots,s_N\}]\leftrightarrow\prod_{1\le i<j\le N}(s_i-s_j)
[s_1]\wedge\dots\wedge[s_N]\leftrightarrow\prod_{1\le i<j\le N}(s_i-s_j)
ds_1\wedge\dots\wedge ds_N$, if $K=k(\Psi)$. \qed \end{example} 

\section{Structure of smooth semilinear representations of $\Sy_{\Psi}$} 
The following result will be used in the particular case of the trivial 
$G$-action on the $A$-module $V$ (i.e., $\chi\equiv id_V$), claiming the 
injectivity of the natural map $A\otimes_{A^G}V^G\to V$ (since $V_{id_V}=V^G$). 
\begin{lemma} \label{inject} Let $G$ be a group, $A$ be 
a division ring endowed with a 
$G$-action $G\to\Aut_{\mathrm{ring}}(A)$, $V$ be an $A\langle G\rangle$-module 
and $\chi:G\to\Aut_A(V)$ be a $G$-action on the $A$-module $V$. 

Set $V_{\chi}:=\{w\in V~|~\sigma w=\chi(\sigma)w
\text{ {\rm for all} $\sigma\in G$}\}$. 

Then $V_{\chi}$ is an $A^G$-module and the natural map 
$A\otimes_{A^G}V_{\chi}\to V$ is injective. \end{lemma}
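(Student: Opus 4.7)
The plan is to follow the standard Artin-style minimal-dependence argument, adapted so that the semilinear $G$-action and the $A$-linear $G$-action $\chi$ interact in the right way.

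First I would observe that $A^G$ is itself a division ring: for $a\in A^G\smallsetminus\{0\}$ the inverse $a^{-1}\in A$ satisfies $\sigma(a^{-1})=(\sigma a)^{-1}=a^{-1}$. Then I would check that $V_{\chi}$ is an $A^G$-submodule of $V$: if $w\in V_{\chi}$ and $a\in A^G$ then for every $\sigma\in G$ one has $\sigma(aw)=a^{\sigma}(\sigma w)=a\chi(\sigma)w=\chi(\sigma)(aw)$, using that $\chi(\sigma)$ is $A$-linear. In particular $V_{\chi}$ is free as an $A^G$-module, so it makes sense to speak of $A^G$-linear independence.

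Next I would rephrase the injectivity of $A\otimes_{A^G}V_{\chi}\to V$ as the statement that any family $(w_i)$ in $V_{\chi}$ which is linearly independent over $A^G$ remains linearly independent over $A$. Assume on the contrary that there is a nontrivial relation $\sum_{i=1}^n a_iw_i=0$ in $V$, with all $a_i\in A^{\times}$, and pick one of minimal length $n$. Normalizing, we may take $a_1=1$. Applying $\sigma\in G$ and using $\sigma w_i=\chi(\sigma)w_i$, we obtain
\[
0=\sigma\Bigl(\sum_{i=1}^n a_iw_i\Bigr)=\sum_{i=1}^n a_i^{\sigma}\chi(\sigma)w_i=\chi(\sigma)\sum_{i=1}^n a_i^{\sigma}w_i,
\]
and since $\chi(\sigma)$ is an $A$-linear automorphism of $V$, it follows that $\sum_{i=1}^n a_i^{\sigma}w_i=0$. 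Subtracting the two relations kills the $i=1$ term, leaving $\sum_{i=2}^n(a_i-a_i^{\sigma})w_i=0$, a shorter relation. By minimality each $a_i$ is fixed by every $\sigma$, i.e.\ $a_i\in A^G$, contradicting the assumed $A^G$-independence of the $w_i$.

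The only real subtlety is the interplay between the two $G$-actions at the step where $\sigma$ is applied to $\sum a_iw_i$; once one sees that $\chi(\sigma)$ can be pulled out as an $A$-linear automorphism, the argument reduces to the classical Dedekind/Artin trick. I expect no further obstacle; the proof is then complete by this minimality contradiction.
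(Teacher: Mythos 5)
Your proof is correct and follows essentially the same route as the paper: both run the classical Dedekind--Artin minimal-dependence argument, applying $\sigma$ to a minimal nontrivial $A$-relation among $A^G$-independent elements of $V_{\chi}$, pulling the $A$-linear automorphism $\chi(\sigma)$ out to cancel it, subtracting to get a shorter relation, and concluding the coefficients lie in $A^G$, a contradiction. The only cosmetic difference is your normalization $a_1=1$ versus the paper's form $w_1=\sum_{j\ge 2}\lambda_j w_j$ with the operator $\sigma-\chi(\sigma)$.
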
 
\begin{proof} This is well-known: Suppose that some elements 
$w_1,\dots,w_m\in V_{\chi}$ are $A^G$-linearly independent, 
but $A$-linearly dependent for a minimal $m\ge 2$. 
Then $w_1=\sum_{j=2}^m\lambda_jw_j$ for some $\lambda_j\in A^{\times}$. 

Applying $\sigma-\chi(\sigma)$ for each $\sigma\in G$ to both sides of the latter 
equality, we get $\sum_{j=2}^m(\lambda_j^{\sigma}-\lambda_j)\chi(\sigma)w_j=0$, 
and therefore, $\sum_{j=2}^m(\lambda_j^{\sigma}-\lambda_j)w_j=0$. 
By the minimality of $m$, one has $\lambda_j^{\sigma}-\lambda_j=0$ for 
each $\sigma\in G$, so $\lambda_j\in A^G$ for any $j$, contradicting to 
the $A^G$-linear independence of $w_1,\dots,w_m$. \end{proof}

\subsection{Growth estimates} \label{substructures} 
Let $G\subseteq\Sy_{\Psi}$ be a permutation group of a set $\Psi$. 

For a subset $S\subset\Psi$, (i) we denote by $G_S$ the pointwise stabilizer of 
the set $S$; (ii) we call the fixed set $\Psi^{G_S}$ the $G$-{\sl closure} of $S$. 
We say that a subset $S\subset\Psi$ is $G$-{\sl closed} if $S=\Psi^{G_S}$. 

Any intersection $\bigcap_iS_i$ of $G$-closed sets $S_i$ is $G$-closed: 
as $G_{S_i}\subseteq G_{\bigcap_jS_j}$, one has $G_{S_i}s=s$ for any 
$s\in\Psi^{G_{\bigcap_jS_j}}$, so $s\in\Psi^{G_{S_i}}=S_i$ for any $i$, and thus, 
$s\in\bigcap_iS_i$. This implies that the subgroup generated by $G_{S_i}$'s 
is dense in $G_{\bigcap_iS_i}$ (and coincides with $G_{\bigcap_iS_i}$ if 
at least one of $G_{S_i}$'s is open). 

The $G$-closed subsets of $\Psi$ form a small concrete category with the 
morphisms being all those embeddings that are induced by elements of $G$. 

For a finite $G$-closed subset $T\subset\Psi$, 
(hiding $G$ and $\Psi$ from notation) set $\Aut(T):=N_G(G_T)/G_T$. 

Assume that for any integer $N\ge 0$ the $G$-closed subsets of length $N$ 
form a non-empty $G$-orbit. For each integer $N\ge 0$ fix a $G$-closed subset 
$\Psi_N\subset\Psi$ of length $N$, i.e., $N$ is the minimal cardinality of 
the subsets $S\subset\Psi$ such that $\Psi_N$ is the $G$-closure of $S$. 

For a division ring endowed with a $G$-action and an $A\langle G\rangle$-module 
$M$ define a function $d_M:{\mathbb Z}_{\ge 0}\to{\mathbb Z}_{\ge 0}\coprod\{\infty\}$ by 
$d_M(N):=\dim_{A^{G_{\Psi_N}}}(M^{G_{\Psi_N}})$. 
\begin{lemma} \label{growth} Let $G$ be either $\Sy_{\Psi}$ (and then $q:=1$) or the group 
of automorphisms of an $\mathbb F_q$-vector space $\Psi$ fixing a subspace of finite dimension 
$v\ge 0$. Let $A$ be a division ring endowed with 
a $G$-action. If $0\neq M\subseteq A\langle G/G_{\Psi_n}\rangle$ for some $n\ge 0$ 
then $d_M$ grows as a $q$-polynomial of degree $n$: 
\[\frac{1}{d_n(n)}([N]_q-[n+m-1]_q)^n\le\frac{d_{m+n}(N)}{d_m(N)d_n(n)}
\le d_M(N)\le q^{vn}d_n(N)\le q^{vn}[N]_q^n\] 
for some $m\ge 0$, where $[s]_q:=\#\Psi_s$ and $d_n(N)$ is the number of embeddings 
$\Psi_n\hookrightarrow\Psi_N$ induced by elements of $G$, 
which is $([N]_q-[0]_q)\cdots([N]_q-[n-1]_q)$. \end{lemma}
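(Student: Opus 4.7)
The plan is to establish the two substantive inequalities separately --- the upper bound $d_M(N)\leq q^{vn}d_n(N)$ and the lower bound $d_{m+n}(N)/(d_m(N)d_n(n))\leq d_M(N)$ --- since the flanking estimates are arithmetic consequences of the formula $d_n(N)=\prod_{i=0}^{n-1}([N]_q-[i]_q)$: one has $d_{m+n}(N)/d_m(N)=\prod_{i=m}^{m+n-1}([N]_q-[i]_q)\geq([N]_q-[m+n-1]_q)^n$ and $d_n(N)\leq[N]_q^n$.

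\emph{Upper bound.} First I would reduce to the ambient module: since $M\subseteq A\langle G/G_{\Psi_n}\rangle$, one has $d_M(N)\leq\dim_{A^{G_{\Psi_N}}}A\langle G/G_{\Psi_n}\rangle^{G_{\Psi_N}}$. Identifying $G/G_{\Psi_n}$ with the set of $G$-embeddings $\phi\colon\Psi_n\hookrightarrow\Psi$, any finitely supported $G_{\Psi_N}$-invariant element decomposes as a sum over \emph{finite} $G_{\Psi_N}$-orbits with the coefficient on each orbit constrained to lie in $A$ fixed by the stabilizer. By Lemma~\ref{strong-generation} the stabilizer of $[\phi]$ in $G_{\Psi_N}$ is the pointwise stabilizer of $\Psi_N\cup\phi(\Psi_n)$, and by Lemma~\ref{open-subgrps-descr} this is of finite index in $G_{\Psi_N}$ precisely when $\phi(\Psi_n)\subseteq\Psi_N$; in that case the orbit is a singleton contributing a one-dimensional $A^{G_{\Psi_N}}$-summand. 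Counting the relevant embeddings yields $d_n(N)$ in the $\Sy_\Psi$ case, and at most $q^{vn}d_n(N)$ in the linear case, the extra factor reflecting the freedom of a $G$-embedding along the fixed $v$-dimensional subspace.

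\emph{Lower bound.} I would pick any nonzero $v\in M$; its stabilizer is open, so after a $G$-translation one may assume $v\in M^{G_{\Psi_m}}$ for some $m\geq 0$ with $\Psi_m\subseteq\Psi_N$ (using the transitivity of $G$ on length-$m$ $G$-closed subsets). Applying the upper-bound analysis with $N$ replaced by $m$ shows $v=\sum a_\phi[\phi]$ with $\phi(\Psi_n)\subseteq\Psi_m$, which in particular forces $m\geq n$; I then fix $\phi_0$ with $a_{\phi_0}\neq 0$ and set $S:=\phi_0(\Psi_n)$. For each $G$-closed length-$n$ subset $T\subseteq\Psi_N\setminus\Psi_m$, I choose $g_T\in G$ with $g_T|_{\Psi_m\setminus S}=\mathrm{id}$ and $g_T(S)=T$; since $g_T(\Psi_m)\subseteq\Psi_N$, this yields $g_Tv\in M^{G_{\Psi_N}}$. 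Projecting onto coordinates $[\phi']$ with $\phi'(\Psi_n)\cap(\Psi_m\setminus S)=\emptyset$, a short check (using $g_T|_{\Psi_m\setminus S}=\mathrm{id}$ and $T\subseteq\Psi_N\setminus\Psi_m$) shows this projection annihilates every term of $g_Tv$ except those indexed by $\phi\in\mathrm{supp}(v)$ with $\phi(\Psi_n)=S$, whose images under $g_T$ all lie in $T$. The supports for distinct $T$'s are therefore in disjoint sets of coordinates, and the coefficient at $[g_T\phi_0]$ equals $a_{\phi_0}^{g_T}\neq 0$, so any putative relation $\sum_Tc_Tg_Tv=0$ with $c_T\in A^{G_{\Psi_N}}$ forces $c_Ta_{\phi_0}^{g_T}=0$ in the division ring $A$ and hence $c_T=0$. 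A direct count of admissible $T$'s matches $d_{m+n}(N)/(d_m(N)d_n(n))$.

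The main obstacle is the combinatorial bookkeeping in the linear case, where the fixed subspace of dimension $v$ must be carefully tracked: it produces the factor $q^{vn}$ in the upper-bound orbit count and forces a corresponding adjustment in the lower-bound count of $G$-closed length-$n$ subsets of $\Psi_N$ disjoint from $\Psi_m$. The $\Sy_\Psi$ case proceeds cleanly with the combinatorial identities above.
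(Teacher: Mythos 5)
Your overall strategy is the same as the paper's: bound $d_M(N)$ above by counting the $G_{\Psi_N}$-fixed cosets in $G/G_{\Psi_n}$ (embeddings $\Psi_n\hookrightarrow\Psi_N$), and below by translating a normalized element of $M$ so that its leading configuration sweeps out the $G$-closed length-$n$ subsets of $\Psi_N$ meeting $\Psi_m$ trivially, separating the translates by a coordinate projection; in the symmetric case your execution is in fact more detailed than the paper's own sketch and the count $\binom{N-m}{n}=d_{m+n}(N)/(d_m(N)d_n(n))$ comes out right. However, your very first reduction has a genuine gap: ``pick $v\in M$; its stabilizer is open, so after a $G$-translation $v\in M^{G_{\Psi_m}}$'' is unjustified, because the lemma assumes only a $G$-action on the division ring $A$, not a smooth one; the coefficients of $v$ need not have open stabilizers, so $M$ is not a smooth module and it is not a priori clear that it contains any nonzero $G_{\Psi_m}$-fixed vector. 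This is precisely why the paper's proof only normalizes the \emph{support} of $\xi\alpha$ and lets the coefficients $b_\sigma\in A$ be arbitrary. The gap is repairable by the trick underlying Lemma~\ref{inject}: take $v\in M$ with support of minimal cardinality and scale one coefficient to $1$; for $g$ in the pointwise stabilizer of the $G$-closure $\Psi_m$ of the union of the images in the support, $gv-v\in M$ has strictly smaller support, hence vanishes, so $v$ is automatically $G_{\Psi_m}$-invariant. As written, though, your step would fail for non-smooth coefficient actions, which the statement allows.

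The second gap is the linear case $q>1$, which is not deferrable bookkeeping. A $G$-closed subset of length $n$ is a subspace containing $\Psi_0$, so ``$T\subseteq\Psi_N\smallsetminus\Psi_m$'' is impossible (the correct condition, as in the paper, is that $T\cap\Psi_m$ has length $0$), and an element $g_T$ that is the identity on the set $\Psi_m\smallsetminus S$ is by linearity the identity on all of $\Psi_m$ (the complement of a proper subspace spans it), contradicting $g_T(S)=T$. Replacing $\Psi_m\smallsetminus S$ by a closed complement $S'$ of $S$ in $\Psi_m$ repairs the construction of $g_T$, but then the projection no longer isolates the terms with $\phi(\Psi_n)=S$: every $\phi$ in the support with $\phi(\Psi_n)\cap S'=\Psi_0$ survives, its image $g_T(\phi(\Psi_n))$ depends on both $T$ and $\phi$, and distinct admissible $T$ in the same class modulo $\Psi_m$ can produce overlapping supports, so the disjointness argument and the identification of the coefficient at $[g_T\phi_0]$ both break. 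Separating the translates only by the class of $T$ modulo $\Psi_m$ yields about $\binom{N-m}{n}_q$ independent invariants, which falls short of the claimed constant $d_{m+n}(N)/(d_m(N)d_n(n))=q^{mn}\binom{N-m}{n}_q$ by the factor $q^{mn}$ --- and that sharper constant is exactly what the essentiality statement in Corollary~\ref{def-binom} extracts from the lemma. So the linear case needs a genuinely finer separation argument (e.g.\ a more careful choice of $\phi_0$, $S'$ and of the projection), which your proposal acknowledges but does not supply.
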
 
\begin{proof} As $M^{G_{\Psi_N}}\subseteq A\langle N_G(G_{\Psi_N})/(N_G(G_{\Psi_N})\cap G_{\Psi_n})\rangle$ 
and (by Lemma \ref{inject}) $A\otimes_{A^{G_{\Psi_N}}}M^{G_{\Psi_N}}\to M\subseteq A\langle G/G_{\Psi_n}\rangle$ 
is injective, there is a natural inclusion \[A\otimes_{A^{G_{\Psi_N}}}M^{G_{\Psi_N}}\hookrightarrow 
A\langle N_G(G_{\Psi_N})/(N_G(G_{\Psi_N})\cap G_{\Psi_n})\rangle=A\langle\Aut(\Psi_N)/\Aut(\Psi_N|\Psi_n)\rangle,\] 
if $n\le N$. (Here $\Aut(\Psi_N|\Psi_n)$ denotes the automorphisms of $\Psi_N$ identical on 
$\Psi_n$.) Then one has 
$d_M(N)\le\#(\Aut(\Psi_N)/\Aut(\Psi_N|\Psi_n))=q^{vn}d_n(N)$. 
The lower bound of $d_M(N)$ is given by the number of $G$-closed 
subsets in $\Psi_N$ with length-0 intersection with $\Psi_m$. 
Indeed, for any non-zero element $\alpha\in M\subseteq A\langle G/G_{\Psi_n}\rangle$ there exist 
an integer $m\ge 0$ and elements $\xi,\eta\in G$ such that $\xi\alpha$ is congruent 
to $\sum_{\sigma\in\Aut(\Psi_n)}b_{\sigma}\eta\sigma$ for some non-zero collection 
$\{b_{\sigma}\in A\}_{\sigma\in\Aut(\Psi_n)}$ modulo monomorphisms whose 
images have intersection of positive length with a fixed finite $\Psi_m$. \end{proof} 

Let $q$ be either 1 or a primary integer. Let $S$ be a plain set if $q=1$ 
and an $\mathbb F_q$-vector space if $q>1$. For each integer $s\ge 0$, we 
denote by $\binom{S}{s}_q$ the set of subobjects of $S$ ($G$-closed subsets of $\Psi$, 
if $S=\Psi$, where $G=\Sy_{\Psi}$ if $q=1$ and $G=\mathrm{GL}_{\mathbb F_q}(\Psi)$ if $q>1$) of length $s$. 
In other words, $\binom{S}{s}_1:=\binom{S}{s}$, while $\binom{S}{s}_q$ is 
the Grassmannian of the $s$-dimensional subspaces in $S$ if $q>1$. 
\begin{corollary} \label{def-binom} Let $G$ be either 
$\Sy_{\Psi}$ (and then $q:=1$) or the group of automorphisms of an $\mathbb F_q$-vector space $\Psi$ 
fixing a finite-dimensional subspace of $\Psi$. Let $A$ be a division 
ring endowed with a $G$-action. Let $\Xi$ be a finite subset in 
$\Hom_{A\langle G\rangle}(A\langle G/G_T\rangle,A\langle G/G_{T'}\rangle)$ 
for some finite $G$-closed $T'\subsetneqq T\subset\Psi$. Then 
\begin{enumerate}\item \label{any-is-essential} any non-zero $A\langle G\rangle$-submodule 
of $A\langle\binom{\Psi}{m}_q\rangle$ is essential for any integer 
$m\ge 0$;\footnote{Recall, that an injection $M\hookrightarrow N$ in an abelian category 
is called an {\sl essential} extension if any non-zero subobject of $N$ has a non-zero 
intersection with the image of $M$.} 
\item\label{level} there are no nonzero isomorphic 
$A\langle G\rangle$-submodules in $A\langle G/G_T\rangle$ and $A\langle G/G_{T'}\rangle$; 
\item \label{kernel-of-finite} the common kernel $V_{\Xi}$ of 
all elements of $\Xi$ is an essential 
$A\langle G\rangle$-submodule in $A\langle G/G_T\rangle$. \end{enumerate} \end{corollary}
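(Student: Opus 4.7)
The plan is to derive all three claims from Lemma~\ref{growth}, tracking the degree and leading coefficient of $d_M(N)$ as a $q$-polynomial in $[N]_q$. I would handle (\ref{level}) first, then (\ref{any-is-essential}), then (\ref{kernel-of-finite}), since (\ref{level}) feeds into (\ref{kernel-of-finite}) and (\ref{any-is-essential}) is the most delicate.

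For (\ref{level}), any nonzero $A\langle G\rangle$-submodule of $A\langle G/G_T\rangle$ has $d_M(N)$ asymptotically a $q$-polynomial of degree equal to the length of $T$ (by combining the lower and upper bounds of Lemma~\ref{growth} with $n$ equal to that length); similarly, any nonzero submodule of $A\langle G/G_{T'}\rangle$ has $d$-function of degree equal to the length of $T'$. Since $T' \subsetneqq T$ are finite and $G$-closed, the length of $T'$ is strictly smaller than that of $T$. An $A\langle G\rangle$-module isomorphism preserves $d$-functions, hence their degrees, so no nonzero submodules of the two modules can be isomorphic.

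For (\ref{any-is-essential}), first compute the ambient growth. The finite $G_{\Psi_N}$-orbits in $\binom{\Psi}{m}_q$ are exactly the length-$m$ subobjects of $\Psi_N$, each a singleton fixed point; by Lemma~\ref{inject} (applied with $\chi$ trivial) the $G_{\Psi_N}$-invariants of $A\langle\binom{\Psi}{m}_q\rangle$ are freely generated over $A^{G_{\Psi_N}}$ by these fixed basis elements, so $d_{A\langle\binom{\Psi}{m}_q\rangle}(N) = \binom{N}{m}_q = d_m(N)/d_m(m)$, with leading coefficient $1/d_m(m)$ in $[N]_q$. For any nonzero submodule $M$, the lower bound $d_M(N) \ge d_{m'+m}(N)/(d_{m'}(N)\,d_m(m))$ in Lemma~\ref{growth} (for some $m' \ge 0$) also has leading coefficient $1/d_m(m)$, while the inclusion into the ambient bounds the leading coefficient of $d_M$ above by $1/d_m(m)$; hence it equals $1/d_m(m)$. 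If two nonzero submodules $M, M'$ of $A\langle\binom{\Psi}{m}_q\rangle$ had $M \cap M' = 0$, then $d_M + d_{M'} = d_{M\oplus M'} \le d_{\mathrm{amb}}$ would force $2/d_m(m) \le 1/d_m(m)$ on leading coefficients, which is impossible (the case $m=0$ is trivial since $A$ is a division ring).

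For (\ref{kernel-of-finite}), each $\phi \in \Xi$ has $\ker\phi$ essential in $A\langle G/G_T\rangle$: otherwise some nonzero $M \subseteq A\langle G/G_T\rangle$ meets $\ker\phi$ trivially, so $\phi|_M$ embeds $M$ into $A\langle G/G_{T'}\rangle$, contradicting (\ref{level}). A finite intersection of essential submodules is essential (if $U_1,U_2$ are essential in $L$ and $N \subseteq L$ is nonzero, then $N \cap U_1 \neq 0$, whence $(N \cap U_1) \cap U_2 \neq 0$; iterate), so $V_\Xi = \bigcap_{\phi \in \Xi}\ker\phi$ is essential. The main obstacle is the leading-coefficient computation in (\ref{any-is-essential}): one must interpret the growth asymptotics in the variable $[N]_q$ rather than $N$, and verify carefully that the finite $G_{\Psi_N}$-orbits in $\binom{\Psi}{m}_q$ are exactly the subobjects of $\Psi_N$, so that the ambient growth really is $\binom{N}{m}_q$ with the predicted leading coefficient $1/d_m(m)$ that matches the lower bound from Lemma~\ref{growth}.
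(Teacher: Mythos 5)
Your proposal is correct and takes essentially the same route as the paper: items (\ref{any-is-essential}) and (\ref{level}) are exactly the paper's appeal to the lower and upper growth bounds of Lemma~\ref{growth} (the paper leaves your leading-coefficient bookkeeping in the variable $[N]_q$ implicit), and your proof of (\ref{kernel-of-finite}) via ``each $\ker\xi$ is essential by (\ref{level}), and a finite intersection of essential submodules is essential'' is the same argument the paper runs by iteratively replacing $M$ with $\ker\xi\cap M$. Two cosmetic points: the computation of $\bigl(A\langle\binom{\Psi}{m}_q\rangle\bigr)^{G_{\Psi_N}}$ is a direct orbit argument (only the subobjects of $\Psi_N$ have finite $G_{\Psi_N}$-orbits, and their coefficients must lie in $A^{G_{\Psi_N}}$), not an application of Lemma~\ref{inject}; and to invoke Lemma~\ref{growth} for submodules of $A\langle\binom{\Psi}{m}_q\rangle$ one should note the $A\langle G\rangle$-module embedding $A\langle\binom{\Psi}{m}_q\rangle\hookrightarrow A\langle G/G_{\Psi_m}\rangle$, $[S]\mapsto\sum_{\mathrm{im}\,\iota=S}[\iota]$, a point the paper also glosses over.
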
 
\begin{proof} (\ref{any-is-essential}) follows from 
the lower growth estimate of Lemma \ref{growth}. 

(\ref{level}) follows immediately from Lemma \ref{growth}. 

(\ref{kernel-of-finite}) Suppose that there exists a nonzero submodule 
$M\subseteq A\langle G/G_T\rangle$ such that $M\cap V_{\Xi}=0$. Then 
restriction of some $\xi\in\Xi$ to $M$ is nonzero. If $\xi|_M$ is not 
injective, replacing $M$ with $\ker\xi\cap M$, we can assume that 
$\xi|_M=0$. In other words, we can assume that restriction to $M$ of 
any $\xi\in\Xi$ is either injective or zero. In particular, restriction 
to $M$ of some $\xi\in\Xi$ is injective, i.e. $\xi$ embeds $M$ into 
$A\langle G/G_{T'}\rangle$, contradicting to (\ref{level}). \end{proof} 

\subsection{Smooth $\Sy_{\Psi}$-sets and $F_{\Psi}$-semilinear representation 
of $\Sy_{\Psi}$ as sheaves}  

Let $\mathrm{FinEmb}$ be the 
following category. Its objects are the finite sets. 
Its morphisms are opposite to the embeddings. 
For each objet $T\in\mathrm{FinEmb}$ denote by 
$\mathrm{FinEmb}_T$ the category of morphisms to $T$. (E.g., $\mathrm{FinEmb}_T$ 
is equivalent to $\mathrm{FinEmb}$, $S\mapsto S\smallsetminus T$, $S\mapsto S\coprod T$.) 
The category $\mathrm{FinEmb}$ admits products: product of a pair of objects $T_1$, $T_2$ 
of $\mathrm{FinEmb}_T$ is $T_1\sqcup_TT_2$. 

Consider $\mathrm{FinEmb}_T$ as a site, where any morphism is covering.

\begin{lemma} \label{iso-restr} Let $\Psi$ be an infinite set. Let $F$ be a field and $k$ be 
a subfield algebraically closed in $F$. To each sheaf of sets $\mathcal F$ on $\mathrm{FinEmb}$ 
we associate the $\Sy_{\Psi}$-set $\mathcal F(\Psi):=\mathop{\underrightarrow{\lim}}
\limits_{J\subset\Psi}\mathcal F(J)$, where $J$ runs over the finite subsets of $\Psi$. 
Let $\mathcal O$ be the sheaf of fields $S\mapsto F_S$. 

This gives rise to the following equivalences of categories: 
\begin{gather*} \nu_{\Psi}:\{\text{{\rm sheaves of sets on $\mathrm{FinEmb}$}}\}
\stackrel{\sim}{\longrightarrow}\{\text{{\rm smooth $\Sy_{\Psi}$-sets}}\};\\ 
\{\text{{\rm sheaves of $k$-vector spaces on $\mathrm{FinEmb}$}}\}
\stackrel{\sim}{\longrightarrow}\{\text{{\rm smooth representations of 
$\Sy_{\Psi}$ over $k$}}\};\\
\{\text{{\rm sheaves of $\mathcal O$-modules on $\mathrm{FinEmb}$}}\}
\stackrel{\sim}{\longrightarrow}\{\text{{\rm smooth $F_{\Psi}$-semilinear 
representations of $\Sy_{\Psi}$}}\}.\end{gather*} 
The functor $\nu_{\Psi}$ admits a quasi-inverse $\nu_{\Psi}^{-1}$ such that for any infinite 
subset $\Psi'\subseteq\Psi$ the functor $\nu_{\Psi'}\circ\nu_{\Psi}^{-1}$ is given by 
$M\mapsto M':=\mathop{\underrightarrow{\lim}}\limits_{J\subset\Psi'}M^{\Sy_{\Psi|J}}\subseteq 
M^{\Sy_{\Psi|\Psi'}}$, where $J$ runs over the finite subsets of $\Psi'$,\footnote{This does 
not lead to confusion in the cases $M=\Psi$, since $\Psi'=\mathop{\underrightarrow{\lim}}\limits_{J\subset\Psi'}J
=\mathop{\underrightarrow{\lim}}\limits_{J\subset\Psi'}\Psi^{\Sy_{\Psi|J}}$.} 
and gives rise to the following equivalences of categories: 
\begin{gather*} \{\text{{\rm smooth $\Sy_{\Psi}$-sets}}\}
\stackrel{\sim}{\longrightarrow}\{\text{{\rm smooth $\Sy_{\Psi'}$-sets}}\};\\ 
\{\text{{\rm smooth representations of $\Sy_{\Psi}$ over $k$}}\}
\stackrel{\sim}{\longrightarrow}\{\text{{\rm smooth representations of 
$\Sy_{\Psi'}$ over $k$}}\};\\
\left\{\begin{array}{c}\text{{\rm smooth $F_{\Psi}$-semilinear}}\\ 
\text{{\rm representations of $\Sy_{\Psi}$}}\end{array}\right\}\stackrel{\sim}{\longrightarrow}
\left\{\begin{array}{c}\text{{\rm smooth $F_{\Psi'}$-semilinear}}\\ 
\text{{\rm representations of $\Sy_{\Psi'}$}}\end{array}\right\}.\end{gather*} 
\end{lemma}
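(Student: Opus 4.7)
The approach is to build $\nu_\Psi^{-1}$ explicitly and reduce everything to a verification of the sheaf axiom via Lemma~\ref{strong-generation}. For a smooth $\Sy_\Psi$-set $M$, I would define $\nu_\Psi^{-1}(M)(J):=M^{\Sy_{\Psi|\iota(J)}}$ after choosing an embedding $\iota\colon J\hookrightarrow\Psi$; any two choices differ by an element of $\Sy_\Psi$, which yields a canonical isomorphism between the resulting fixed sets (the ambiguity in the lift is absorbed by a stabilizer that fixes its own invariants). Functoriality with respect to a morphism $S\to T$ in $\mathrm{FinEmb}$ (i.e.\ an embedding $T\hookrightarrow S$) is obtained by lifting the embedding to an element of $\Sy_\Psi$, the same argument showing independence of the lift.

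The core check is the sheaf axiom for the single-morphism covering $\{S\to T\}$, which demands that $M^{\Sy_{\Psi|T}}$ be the equalizer of the two maps $M^{\Sy_{\Psi|S}}\rightrightarrows M^{\Sy_{\Psi|S\sqcup_TS}}$. I would realize $T\subseteq S\subset\Psi$ and choose $\sigma\in\Sy_{\Psi|T}$ with $\sigma(S)\cap S=T$; then $S\cup\sigma(S)\subset\Psi$ represents $S\sqcup_T S$, and the two structural maps $S\to S\sqcup_T S$ are the inclusion and the action of $\sigma$. An element $x\in M^{\Sy_{\Psi|S}}$ lies in the equalizer precisely when $\sigma(x)=x$, i.e.\ when $x$ is fixed by the subgroup generated by $\Sy_{\Psi|S}$ and $\Sy_{\Psi|\sigma(S)}=\sigma\Sy_{\Psi|S}\sigma^{-1}$, which by Lemma~\ref{strong-generation} equals $\Sy_{\Psi|T}$. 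The quasi-inverse property $\nu_\Psi\circ\nu_\Psi^{-1}(M)=M$ then follows from smoothness, while $\nu_\Psi^{-1}\circ\nu_\Psi(\mathcal{F})=\mathcal{F}$ is obtained by the same $\sigma$-trick inside the colimit: an invariant class represented by $y\in\mathcal{F}(J')$ with $J\subseteq J'$ satisfies the equalizer condition for the cover $J'\to J$, so by the sheaf axiom on $\mathcal{F}$ it lifts uniquely to $\mathcal{F}(J)$. The $k$-vector-space version is identical internal to $k$-vector spaces.

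The $\mathcal{O}$-module version requires the extra input that $\mathcal{O}$ itself is a sheaf of fields, equivalently $F_\Psi^{\Sy_{\Psi|J}}=F_J$ for every finite $J\subset\Psi$. Granted this, a smooth $F_\Psi$-semilinear representation $V$ corresponds to the sheaf $J\mapsto V^{\Sy_{\Psi|J}}$, which is automatically an $F_J=\mathcal{O}(J)$-module, and both equivalences then follow from the underlying set and $k$-module equivalences. For the restriction to an infinite $\Psi'\subseteq\Psi$, since every finite set embeds into $\Psi'$, the full subcategory of $\mathrm{FinEmb}^{op}$ on finite subsets of $\Psi'$ is essentially surjective and cofinal in the colimit $\nu_{\Psi'}$; the formula $\nu_{\Psi'}\circ\nu_\Psi^{-1}(M)=\mathop{\underrightarrow{\lim}}\limits_{J\subset\Psi'}M^{\Sy_{\Psi|J}}$ and its compatibility with all three structural levels (sets, $k$-modules, $\mathcal{O}$-modules) drop out immediately.

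The main technical obstacle is the Galois-descent identity $F_\Psi^{\Sy_{\Psi|J}}=F_J$ needed for the $\mathcal{O}$-module case. Since $F|k$ is arbitrary, this must be argued by a finite-by-finite reduction inside $\bigotimes_{k,\Psi}F$, exploiting that $k$ is algebraically closed in $F$ to guarantee integrality of all tensor subproducts and a tractable action of $\Sy_{\Psi|J}$ on the fraction field; the rest of the lemma is essentially a reformulation of the bookkeeping of open stabilizers.
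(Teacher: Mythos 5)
Your proposal is correct and follows essentially the same route as the paper: the quasi-inverse $S\mapsto M^{\Sy_{\Psi|\iota_S(S)}}$ with the ambiguity of embeddings absorbed by the stabilizers (double cosets), and the sheaf axiom verified precisely via Lemma~\ref{strong-generation}; you in fact make explicit the equalizer check with $\sigma(S)\cap S=T$ and the cofinality argument for $\Psi'\subseteq\Psi$ that the paper leaves implicit. The one point you defer, $F_{\Psi}^{\Sy_{\Psi|J}}=F_J$, is a routine linear-disjointness fact in this setup (and for the stated equivalence only the obvious inclusion $F_J\subseteq F_{\Psi}^{\Sy_{\Psi|J}}$ together with $\varinjlim_J F_J=F_{\Psi}$ is actually used), so this is not a genuine gap — the paper's own proof does not address it either.
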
 
\begin{proof} For any pair of sheaves $\mathcal F$ and $\mathcal G$, a map of sets 
$\alpha:\mathcal F(\Psi)\to\mathcal G(\Psi)$ and any embedding $\iota:S\hookrightarrow\Psi$ such 
that $\alpha(\mathcal F(\Psi)^{\Sy_{\Psi|\iota(S)}})\subseteq\mathcal G(\Psi)^{\Sy_{\Psi|\iota(S)}}$ 
there is a unique map $\alpha_S:\mathcal F(S)\to\mathcal G(S)$ making commutative the square 
\[\begin{array}{ccc}\mathcal F(S)&\stackrel{\alpha_S}{\longrightarrow}&\mathcal G(S)\\
\downarrow\lefteqn{\iota_{\mathcal F}}&&\downarrow\lefteqn{\iota_{\mathcal G}}\\ 
\mathcal F(\Psi)&\stackrel{\alpha}{\longrightarrow}&\mathcal G(\Psi).\end{array}\] 
If $\alpha$ is a morphism of $\Sy_{\Psi}$-sets then $\alpha_S$ is 
independent of $\iota$, since all embeddings $S\hookrightarrow\Psi$ form a single 
$\Sy_{\Psi}$-orbit. This gives gives rise to a natural bijection 
$\Hom_{\Sy_{\Psi}}(\mathcal F(\Psi),\mathcal G(\Psi))\stackrel{\sim}{\longrightarrow}
\Hom(\mathcal F,\mathcal G)$, the inverse map is given by restriction to finite subsets 
of $\Psi$. 

To construct a functor $\nu_{\Psi}^{-1}$ quasi-inverse to $\nu_{\Psi}$, for each finite 
set $S$ fix an embedding $\iota_S:S\hookrightarrow\Psi$, which is identical if $S$ 
is a subset of $\Psi$. Then to a smooth $\Sy_{\Psi}$-set $M$ we associate the presheaf 
$S\mapsto M^{\Sy_{\Psi|\iota_S(S)}}$ and to each embedding $j:S\hookrightarrow T$ we 
associate a unique map $M^{\Sy_{\Psi|\iota_S(S)}}\hookrightarrow M^{\Sy_{\Psi|\iota_T(T)}}$ 
induced by the element 
$\iota_Tj\iota_S^{-1}\in\Sy_{\Psi|\iota_T(T)}\backslash\Sy_{\Psi}/\Sy_{\Psi|\iota_S(S)}$. 
It follows from Lemma~\ref{strong-generation} that this presheaf is a sheaf. \end{proof}

\subsection{Local structure of smooth semilinear representations of $\Sy_{\Psi}$} 
\begin{proposition} \label{local-structure} Let $K$ be a field endowed with a faithful 
smooth $\Sy_{\Psi}$-action. Then for any smooth finitely generated $K\langle\Sy_{\Psi}\rangle$-module $V$ 
there is a finite subset $J\subset\Psi$ and an isomorphism of $K\langle\Sy_{\Psi|J}\rangle$-modules 
$\bigoplus_{s=0}^NK\langle\binom{\Psi\smallsetminus J}{s}\rangle^{\kappa_s}\stackrel{\sim}{\longrightarrow}V$ 
for some integer $N,\kappa_0,\dots,\kappa_N\ge 0$. \end{proposition}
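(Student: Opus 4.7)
The plan is to combine Example~\ref{Omega-binom}, which identifies the modules $\{K\langle\binom{\Psi}{s}\rangle\}_{s\ge 0}$ as a system of generators of the category of smooth $K\langle\Sy_\Psi\rangle$-modules, with the clean decomposition of these generators under restriction to $\Sy_{\Psi|J}$. Since $\Sy_{\Psi|J}$ fixes $J$ pointwise, the $\Sy_{\Psi|J}$-orbits on $\binom{\Psi}{s}$ are indexed by $T=S\cap J$, yielding the isomorphism of $K\langle\Sy_{\Psi|J}\rangle$-modules
\[
K\langle\binom{\Psi}{s}\rangle\;\cong\;\bigoplus_{T\subseteq J,\,|T|\le s}K\langle\binom{\Psi\smallsetminus J}{s-|T|}\rangle.
\]
So the $\Sy_\Psi$-generators restrict to direct sums of $\Sy_{\Psi|J}$-generators of exactly the shape needed in the target.

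First pick a finite generating set $v_1,\ldots,v_m$ of $V$ as a $K\langle\Sy_\Psi\rangle$-module and, by smoothness, a finite $J_0\subset\Psi$ with $v_i\in V^{\Sy_{\Psi|J_0}}$ for all $i$. Each $\Sy_\Psi$-orbit $\Sy_\Psi v_i$ splits under $\Sy_{\Psi|J_0}$ into orbits parameterized by the double cosets in $\Sy_{\Psi|J_0}\backslash\Sy_\Psi/\mathrm{Stab}(v_i)$, and each such orbit is a quotient of $\binom{\Psi\smallsetminus J_0}{t}$ for some $t\le|J_0|$ measuring the part of a translated copy of $J_0$ lying outside $J_0$. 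Assembling these morphisms produces a $K\langle\Sy_{\Psi|J_0}\rangle$-surjection
\[
\pi\colon\bigoplus_{s=0}^{|J_0|}K\langle\binom{\Psi\smallsetminus J_0}{s}\rangle^{\mu_s}\twoheadrightarrow V
\]
for suitable multiplicities $\mu_s$.

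The main obstacle is to upgrade $\pi$ to an isomorphism after enlarging $J_0$ to a finite $J\supseteq J_0$. The plan is to control $\ker\pi$ level by level: Corollary~\ref{def-binom}(\ref{level}) isolates the relations inside $\ker\pi$ to one level $s$ at a time, while Corollary~\ref{def-binom}(\ref{any-is-essential}), combined with Lemma~\ref{inject} applied to the trivial $\Sy_{\Psi|J}$-action on $V^{\Sy_{\Psi|J}}$, confines any surviving level-$s$ relation to the $\Sy_{\Psi|J}$-invariants. Choosing $J$ large enough that these finitely many level-wise relations become $\Sy_{\Psi|J}$-invariant then lets one trim the multiplicities $\mu_s$ down to the genuine $\kappa_s$. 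The delicate point, and the real hard part, is arranging a uniform $J$ that handles every level and all relations simultaneously; finite generation of $V$ is used only to bound $N$ by $|J_0|$, so only finitely many levels contribute, after which the above machinery can be applied one level at a time to produce the claimed direct-sum decomposition.
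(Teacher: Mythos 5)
Your first half is fine and matches the paper's starting point: Lemma~\ref{semilin-gener} (equivalently Example~\ref{Omega-binom}) gives a surjection onto $V$ from a finite direct sum of generators, and restricting to $\Sy_{\Psi|J_0}$ decomposes each $K\langle\binom{\Psi}{s}\rangle$ into the modules $K\langle\binom{\Psi\smallsetminus J_0}{t}\rangle$ (up to the finite-index bookkeeping of Lemma~\ref{finite-index-split}). But the heart of the proposition is precisely the part you defer: converting the presentation $\pi$ into a direct-sum \emph{isomorphism}. Your proposed mechanism for this does not work as stated. You want to enlarge $J$ so that the ``level-$s$ relations become $\Sy_{\Psi|J}$-invariant'' and then trim multiplicities; however $K\langle\binom{\Psi\smallsetminus J}{s}\rangle^{\Sy_{\Psi|J}}=0$ for every $s\ge 1$ and every finite $J$ (the $\Sy_{\Psi|J}$-orbit of any $[S]$ with $S\cap J=\varnothing$, $S\neq\varnothing$, is infinite, while elements are finite sums), so a nonzero relation with a positive-level component can never be made invariant by growing $J$. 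Lemma~\ref{inject} and Corollary~\ref{def-binom} do not supply a trimming procedure either: knowing that submodules are essential or that different levels share no isomorphic submodules does not by itself let you delete a summand from the source of a surjection. You explicitly acknowledge that finding a uniform $J$ handling all relations is ``the real hard part,'' and that is exactly the content that is missing.

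For comparison, the paper resolves this by a double induction, on the top level $N$ and on its multiplicity $m$, with a dichotomy. If the top-level map $\alpha:K\langle\binom{\Psi}{N}\rangle^m\to V$ is injective, the growth estimates of Lemma~\ref{growth} force $\mathrm{Im}(\alpha)\cap\mathrm{Im}(\beta)=0$, so $V\cong K\langle\binom{\Psi}{N}\rangle^m\oplus\mathrm{Im}(\beta)$ and the lower levels are handled by induction; this is where the direct sum (not merely an extension) comes from, and nothing in your sketch plays this role. If $\alpha$ is not injective, one takes a single nonzero relation $\xi_1=\sum_i a_i[I_i]$ and sets $J:=\bigcup_i I_i\smallsetminus I_1$ --- deliberately keeping $I_1$ \emph{outside} $J$ --- so that, over $\Sy_{\Psi|J}$, the relation and its translates eliminate one top-level copy at the cost of adding generators $K\langle\binom{\Psi\smallsetminus J}{s}\rangle$ of strictly lower level; this reduces $m$ by one and the induction terminates. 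In other words, relations are used to \emph{eliminate} generators rather than to become invariant, which is the idea your proposal is missing.
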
 
\begin{proof} By Lemma \ref{semilin-gener}, there is a surjection of $K\langle G\rangle$-modules 
$K\langle\binom{\Psi}{N}\rangle^m\oplus\bigoplus_{s=0}^{N-1}K\langle\binom{\Psi}{s}\rangle^{m_s}\to V$ 
for some $N\ge 0$ and $m_s\ge 0$. The proof proceeds by induction on $N$, the case $N=0$ being trivial. 

The induction step proceeds by induction on $m$, the case $m=0$ being the induction 
assumption of the induction on $N$. Let $\alpha:K\langle\binom{\Psi}{N}\rangle^m\to V$ and 
$\beta:\bigoplus_{s=0}^{N-1}K\langle\binom{\Psi}{s}\rangle^{m_s}\to V$ be two morphisms such that 
$\alpha+\beta:K\langle\binom{\Psi}{N}\rangle^m\oplus\bigoplus_{s=0}^{N-1}K\langle\binom{\Psi}{s}
\rangle^{m_s}\to V$ is surjective. Suppose that $\alpha$ is injective. Then, by Lemma \ref{growth}, 
the images of $\alpha$ and of $\beta$ have zero intersection. Therefore, 
$V\cong K\langle\binom{\Psi}{N}\rangle^m\oplus\mathrm{Im}(\beta)$, thus, concluding the induction step. 
Suppose now that $\alpha$ is not injective. Then $\alpha$ factots through a quotient 
$K\langle\binom{\Psi}{N}\rangle^m/\langle(\xi_1,\dots,\xi_m)\rangle$ for a non-zero collection 
$(\xi_1,\dots,\xi_m)$. Without loss of generality, we may assume that $\xi_1\neq 0$, 
so $\xi_1=\sum_{i=1}^ba_iI_i$ for some $I_i\subset\Psi$ of order $N$ and non-zero $a_i$. 
Set $J:=\bigcup_{i=1}^bI_i\smallsetminus I_1$. Then the inclusion 
$K\langle\binom{\Psi}{N}\rangle^{m-1}\hookrightarrow K\langle\binom{\Psi}{N}\rangle^m$ induces 
a surjection of $K\langle G_J\rangle$-modules $K\langle\binom{\Psi}{N}\rangle^{m-1}\oplus
\bigoplus_{\Lambda\subsetneqq J}K\langle\binom{\Psi\smallsetminus J}{\#\Lambda}\rangle\to 
K\langle\binom{\Psi}{N}\rangle^m/\langle(\xi_1,\dots,\xi_m)\rangle$ 
giving rise to a surjection of $K\langle G_J\rangle$-modules 
$K\langle\binom{\Psi}{N}\rangle^{m-1}\oplus\bigoplus_{s=0}^{N-1}
K\langle\binom{\Psi\smallsetminus J}{s}\rangle^{\binom{\#J}{s}+m_s}\to V$. \end{proof} 

\begin{remark} By Krull--Schmidt--Remak--Azumaya Theorem the integers 
$N,\kappa_0,\dots,\kappa_N\ge 0$ in Proposition are uniquely determined. 
Clearly, $N$ and $\kappa_N$ are independent of $J$. We call $N$ {\sl level} of $V$. 
It is easy to show 
that any non-zero submodule of $K\langle\binom{\Psi\smallsetminus S}{N}\rangle$ is of level $N$. \end{remark} 

\begin{corollary} Let $K$ be a field endowed with a smooth $\Sy_{\Psi}$-action. 
Then any smooth finitely generated $K\langle\Sy_{\Psi}\rangle$-module $V$ is 
admissible, i.e., $\dim_{K^U}V^U<\infty$ for any open subgroup $U\subseteq\Sy_{\Psi}$. \qed 
\end{corollary}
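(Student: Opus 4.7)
The plan is to reduce admissibility to the case of the basic open subgroups $U=\Sy_{\Psi|T}$ with $T$ finite, and then to read off $V^{\Sy_{\Psi|T}}$ from the structural decomposition of Proposition~\ref{local-structure}.

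First I would invoke Lemma~\ref{open-subgrps-descr}: any open $U\subseteq\Sy_\Psi$ contains some $\Sy_{\Psi|T'}$ ($T'$ finite) as a normal subgroup of finite index. Enlarging $T'$ to a finite set $T$ also containing the subset $J\subset\Psi$ furnished by Proposition~\ref{local-structure} applied to $V$, the containment $\Sy_{\Psi|T}\subseteq U$ of finite index persists. From $V^U\subseteq V^{\Sy_{\Psi|T}}$ one then gets
\[
\dim_{K^U}V^U\;\le\;[K^{\Sy_{\Psi|T}}:K^U]\cdot\dim_{K^{\Sy_{\Psi|T}}}V^{\Sy_{\Psi|T}},
\]
where the first factor is at most $[U:\Sy_{\Psi|T}]<\infty$ by Artin's theorem (equivalently, Speiser's form of Hilbert~90 already used in Proposition~\ref{Satz1}) applied to the finite group $U/\Sy_{\Psi|T}$ acting on $K^{\Sy_{\Psi|T}}$. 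It therefore suffices to bound the second factor.

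Now I would plug in the isomorphism $V\cong\bigoplus_{s=0}^N K\langle\binom{\Psi\smallsetminus J}{s}\rangle^{\kappa_s}$ of $K\langle\Sy_{\Psi|J}\rangle$-modules (hence of $K\langle\Sy_{\Psi|T}\rangle$-modules, since $\Sy_{\Psi|T}\subseteq\Sy_{\Psi|J}$) and compute $(K\langle\binom{\Psi\smallsetminus J}{s}\rangle)^{\Sy_{\Psi|T}}$ orbit-by-orbit. The $\Sy_{\Psi|T}$-orbits on $\binom{\Psi\smallsetminus J}{s}$ are indexed by the intersection $A=I\cap T\subseteq T\smallsetminus J$: the orbit is a singleton exactly when $|A|=s$ and is infinite otherwise. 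The crucial observation is that on an infinite orbit the semilinear invariance $a_{\sigma I}=a_I^\sigma$ forces every coefficient to vanish, for otherwise the whole orbit would receive non-zero coefficients and $v=\sum_I a_I[I]$ would fail to have finite support, contradicting membership in the direct sum $K\langle\,\cdot\,\rangle$. So only the $\binom{|T|-|J|}{s}$ singleton orbits contribute, each a one-dimensional $K^{\Sy_{\Psi|T}}$-line $K^{\Sy_{\Psi|T}}\cdot[A]$, giving
\[
\dim_{K^{\Sy_{\Psi|T}}}V^{\Sy_{\Psi|T}}\;=\;\sum_{s=0}^N\kappa_s\binom{|T|-|J|}{s}\;<\;\infty.
\]

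The substantive content is already concentrated in Proposition~\ref{local-structure}; the main obstacle in the present argument is the bookkeeping needed to pass between the three fields $K^U\subseteq K^{\Sy_{\Psi|T}}\subseteq K^{\Sy_{\Psi|J}}$ and to rule out the infinite-orbit contributions via the finite-support constraint. Once these are in place, admissibility reduces to the elementary combinatorial count above.
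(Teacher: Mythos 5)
Your orbit-by-orbit computation of $(K\langle\binom{\Psi\smallsetminus J}{s}\rangle)^{\Sy_{\Psi|T}}$ is sound and is indeed the substantive combinatorial input: the finite-support constraint kills the coefficients on every infinite $\Sy_{\Psi|T}$-orbit, and only the singleton orbits $\{A\}$ with $A\subseteq T\smallsetminus J$, $|A|=s$, survive, giving $\dim_{K^{\Sy_{\Psi|T}}}V^{\Sy_{\Psi|T}}=\sum_{s=0}^N\kappa_s\binom{|T|-|J|}{s}$. However, your first reduction contains a genuine error. After enlarging the canonical $T'$ to a finite $T\supseteq T'\cup J$, you assert that ``the containment $\Sy_{\Psi|T}\subseteq U$ of finite index persists.'' This is false whenever $T\supsetneq T'$: already $[\Sy_{\Psi|T'}:\Sy_{\Psi|T}]$ equals the number of embeddings of the nonempty finite set $T\smallsetminus T'$ into the infinite set $\Psi\smallsetminus T'$, which is infinite, so $[U:\Sy_{\Psi|T}]=\infty$ and correspondingly $[K^{\Sy_{\Psi|T}}:K^U]$ is infinite (for instance $[k(T):k(T')]=\infty$ when $K=k(\Psi)$ and $U=\Sy_{\Psi|T'}$). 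Your displayed inequality $\dim_{K^U}V^U\le[K^{\Sy_{\Psi|T}}:K^U]\cdot\dim_{K^{\Sy_{\Psi|T}}}V^{\Sy_{\Psi|T}}$ is therefore vacuous, and Artin's theorem (or Speiser's Hilbert~90) cannot rescue the first factor.

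The missing ingredient is Lemma~\ref{inject}, applied with $A=K$ and trivial $\chi$. It gives injections $K\otimes_{K^U}V^U\hookrightarrow V$ and $K\otimes_{K^{\Sy_{\Psi|T}}}V^{\Sy_{\Psi|T}}\hookrightarrow V$, so that $\dim_{K^U}V^U=\dim_K\bigl(K\cdot V^U\bigr)$ and $\dim_{K^{\Sy_{\Psi|T}}}V^{\Sy_{\Psi|T}}=\dim_K\bigl(K\cdot V^{\Sy_{\Psi|T}}\bigr)$, where the dot denotes the $K$-span inside $V$. Since $\Sy_{\Psi|T}\subseteq\Sy_{\Psi|T'}\subseteq U$ one has $V^U\subseteq V^{\Sy_{\Psi|T}}$, hence $K\cdot V^U\subseteq K\cdot V^{\Sy_{\Psi|T}}$, and therefore
\[
\dim_{K^U}V^U\;\le\;\dim_{K^{\Sy_{\Psi|T}}}V^{\Sy_{\Psi|T}}\;=\;\sum_{s=0}^N\kappa_s\binom{|T|-|J|}{s}\;<\;\infty,
\]
with no need to compare the degrees of the fields $K^U$ and $K^{\Sy_{\Psi|T}}$ over one another. (The finite-degree argument is only legitimate for the step from $U$ down to the canonical $\Sy_{\Psi|T'}$ of Lemma~\ref{open-subgrps-descr}, where it is anyway subsumed by Lemma~\ref{inject}.) With this replacement of the bookkeeping step, the remainder of your proof is correct.
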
 

\begin{proposition} \label{injectivity-K} Let $\Psi$ be a set, $F$ be a field and $k$ be a subfield 
algebraically closed in $F$, $K=F_{\Psi}$ be the field defined on p.\pageref{NotaF} endowed with 
the standard $\Sy_{\Psi}$-action. Assume that transcendence degree of the field extension $F|k$ is 
at most continuum. 
Then the smooth $K\langle\Sy_{\Psi}\rangle$-module $K$ is an injective 
object of the category of smooth $K$-semilinear representations of $\Sy_{\Psi}$. \end{proposition}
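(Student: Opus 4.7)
The plan is to verify Baer's criterion for $K$ with respect to the system of generators $\{K\langle\binom{\Psi}{n}\rangle\}_{n\ge 0}$ furnished by Lemma~\ref{semilin-gener} and Example~\ref{Omega-binom}. A standard Zorn's lemma argument reduces the task to the following concrete extension problem: for every $n\ge 0$, every finitely generated submodule $M\subseteq K\langle\binom{\Psi}{n}\rangle$, and every morphism $f\colon M\to K$ of smooth $K\langle\Sy_{\Psi}\rangle$-modules, $f$ extends to a morphism $K\langle\binom{\Psi}{n}\rangle\to K$.

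Given such $M$ and $f$, the first step is to invoke Proposition~\ref{local-structure} to produce a finite subset $J\subset\Psi$ and an isomorphism of $K\langle\Sy_{\Psi|J}\rangle$-modules
\[
M\cong\bigoplus_{s=0}^{n}K\langle\tbinom{\Psi\smallsetminus J}{s}\rangle^{\kappa_{s}}.
\]
The same proposition (combined with the orbit decomposition of Example~\ref{Omega-binom}) yields an analogous $K\langle\Sy_{\Psi|J}\rangle$-decomposition of the ambient $K\langle\binom{\Psi}{n}\rangle$ into summands $K\langle\binom{\Psi\smallsetminus J}{s}\rangle$ indexed by subsets of $J$. The sheaf-theoretic equivalence of Lemma~\ref{iso-restr} then identifies $\Hom_{K\langle\Sy_{\Psi|J}\rangle}(K\langle\binom{\Psi\smallsetminus J}{s}\rangle,K)$ with a space of sections of the structure sheaf $\mathcal O$; by Lemma~\ref{inject} and independence of characters (as in Lemma~\ref{no-simple-submod}), each such morphism is determined by a single element of $K^{\Sy_{\Psi|T}}=F_{T}$, for an appropriate finite $T\supseteq J$, subject to a residual compatibility under the finite group $\Aut(T)$. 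So, at the level of the open subgroup $\Sy_{\Psi|J}$, the extension of $f$ is straightforward and reduces to a linear-algebraic problem over the field $F_T$.

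The decisive step, and the principal obstacle, is to \emph{globalize} the $\Sy_{\Psi|J}$-equivariant extension to a genuinely $\Sy_{\Psi}$-equivariant morphism. This amounts to solving a descent system for the extending elements of $K$ along the orbits of $\Sy_{\Psi}$ on $\binom{\Psi}{n}$ not already represented in $M$, in direct analogy with Hilbert~90 but in the non-precompact setting where the classical argument of Proposition~\ref{Satz1} breaks down. It is here that the hypothesis $\mathrm{trdeg}(F|k)\le 2^{\aleph_{0}}$ should enter: the cardinality bound on $F$ guarantees a sufficient supply of embeddings $F_{T}\hookrightarrow F_{\Psi}$ realizing the prescribed $\Sy_{\Psi}$-action, thereby ensuring that the descent system admits a solution. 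Once a compatible global assignment is constructed, the resulting morphism $K\langle\binom{\Psi}{n}\rangle\to K$ restricts to $f$ on $M$, completing Baer's criterion. The hard part will be verifying that the transcendence-degree bound is exactly what is needed to produce this descent: for larger $F$ one would expect genuine $\mathrm{Ext}^{1}$-classes to appear, obstructing injectivity.
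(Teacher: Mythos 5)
Your reduction via Baer's criterion is legitimate in this setting (and, since $K\langle\binom{\Psi}{n}\rangle$ is noetherian by Proposition \ref{noetherian-generators}, restricting to finitely generated $M$ costs nothing), but the argument has a genuine gap exactly where you defer the work. The step you call straightforward --- extending over $\Sy_{\Psi|J}$ --- is in fact where the entire difficulty sits: after restricting, $K=F_{\Psi}$ is the fraction field of $K'\otimes_kF_J$ with $K'=F_{\Psi\smallsetminus J}$, and what is needed is a $K'\langle\Sy_{\Psi|J}\rangle$-equivariant map $K\to K'$ that is the identity on $K'$; the constants have been enlarged from $k$ to $F_J$, so this is not a ``linear-algebraic problem over $F_T$''. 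It is precisely here, and not in any ``supply of embeddings $F_T\hookrightarrow F_{\Psi}$'', that the transcendence-degree hypothesis enters in the paper's proof: since $k((t))$ has transcendence degree continuum over $k$, a transcendence basis of $F_J|k$ can be sent to algebraically independent elements of $k((t))$, MacLane's theorem then embeds $F_J$ into the Hahn series field $\overline{k}((t^{\mathbb Q}))$, hence $K$ into $(K'\otimes_k\overline{k})((t^{\mathbb Q}))$, and the constant-term map gives the desired equivariant retraction $\xi:K\to K'$. Your sketch has no counterpart of this construction.

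Second, your ``decisive'' globalization step --- solving a descent system to upgrade an $\Sy_{\Psi|J}$-equivariant extension to an $\Sy_{\Psi}$-equivariant one --- is left entirely unargued: asserting that the cardinality bound ``should'' make the descent solvable is not a proof, and no mechanism is proposed. The paper never has to solve such a global descent, because it does not use Baer's criterion: it shows instead that $K$ admits no proper essential extension. Given a cyclic essential extension $E\supseteq K$, Proposition \ref{local-structure} and Lemma \ref{iso-restr} produce the restricted module $E'$ over $K'\langle\Sy_{\Psi|J}\rangle$, and it suffices to exhibit a $K'\langle\Sy_{\Psi|J}\rangle$-morphism $E'\to K'$ identical on $K'$ (projection onto the level-zero part $K^{\kappa_0}$ composed with a $K'$-rational linear map and the retraction $\xi$ above); essentiality then forces its kernel to be zero, so $E'=K'$ and $E=K$. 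So to repair your proposal you must either construct the $\Sy_{\Psi}$-equivariant extension directly --- for which you currently have no tool --- or replace Baer's criterion by the essential-extension characterization of injectivity, where only $\Sy_{\Psi|J}$-equivariance is ever required, and then supply the Hahn-series retraction that makes that local step work.
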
 
\begin{proof} Let a smooth $K\langle\Sy_{\Psi}\rangle$-module $E$ be an essential 
extension of $K$. We are going to show that $E=K$, so we may assume that $E$ is cyclic. 
By Proposition \ref{local-structure}, there is a finite subset $J\subset\Psi$ and 
an isomorphism of $K\langle\Sy_{\Psi|J}\rangle$-modules 
$\bigoplus_{s=0}^NK\langle\binom{\Psi\smallsetminus J}{s}\rangle^{\kappa_s}\stackrel{\sim}{\longrightarrow}E$ 
for some integer $N,\kappa_0,\dots,\kappa_N\ge 0$. Let, in notation of Lemma~\ref{iso-restr}, 
$E':=\mathop{\underrightarrow{\lim}}\limits_IE^{\Sy_{\Psi|I}}$, where $I$ runs over finite 
subsets of $\Psi\smallsetminus J$, so $E'$ is a cyclic $K'\langle\Sy_{\Psi|J}\rangle$-submodule 
of $\bigoplus_{s=0}^NK\langle\binom{\Psi\smallsetminus J}{s}\rangle^{\kappa_s}$ 
which is an essential extension of $K'$. The natural projection defines a morphism of 
$K'\langle\Sy_{\Psi|J}\rangle$-modules $\pi:E'\to K^{\kappa_0}$ injective on $K'\subseteq E'$. 

To show that $E'=K'$, we have to construct a morphism $\lambda:E'':=\pi(E')\to K'$ 
identical on $K'$. A morphism $\lambda$ is constructed as composition of (i) any $K$-linear 
morphism $K^{\kappa_0}\to K$, which is $K'$-rational and identical on 
$K'\subseteq(E'')^{\Sy_{\Psi|J}}\subset(K^{\kappa_0})^{\Sy_{\Psi|J}}=(K')^{\kappa_0}$ 
with (ii) a morphism of $K'\langle\Sy_{\Psi|J}\rangle$-modules $\xi$ from the fraction field 
$K$ of $K'\otimes_kF_J$ to $K'$ identical on $K'$. We define $\xi$ as follows. Let 
$k_0\subseteq k$ be the prime subfield. Then the cardinality of $k_0((t))$ is continuum, so 
transcendence degree of $k_0((t))$ (and of $k((t))$ over $k$ as well) is continuum. This 
implies that we can send the elements of a chosen transcendence basis of $F_J|k$ to elements 
of $k((t))$ algebraically independent over $k$. 
By \cite{MacLane}, this extends to an embedding of the field $F_J$ into the field 
$\overline{k}((t^{\mathbb Q}))$ of Hahn power series (i.e., of formal expressions of 
the form $\sum_{s\in\mathbb Q}a_st^s$ with $a_s\in\overline{k}$ such that 
the set $S=\{s\in\mathbb Q~|~a_s\neq 0\}$ is bounded from below and the set 
$\{s\in\mathbb Q~|~s<r,~a_s\neq 0\}$ is finite for each real $r<\sup S$), so 
$K$ becomes a subfield of $(K'\otimes_k\overline{k})((t^{\mathbb Q}))$. Let $\xi:K\to K'$ 
be the constant term of the Hahn power series expression. \end{proof} 

\subsection{Proofs of Theorems \ref{Gabriel-spectrum}, 
\ref{smooth-simple-degree-0} and \ref{unip}} 
\begin{lemma} \label{simple-in-K} Let $\Psi$ be a set and $J\subset\Psi$ be a subset. 
Let $F$ be a field and $k$ be a subfield algebraically closed in $F$. Then any 
simple $F_{\Psi\smallsetminus J}\langle\Sy_{\Psi|J}\rangle$-submodule $M$ of $F_{\Psi}$ 
coincides with $aF_{\Psi\smallsetminus J}$ for some $a\in F_J^{\times}$. In particular, 
$M$ is isomorphic to $F_{\Psi\smallsetminus J}$. \end{lemma}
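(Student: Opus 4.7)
The plan is to produce a non-zero $\Sy_{\Psi|J}$-invariant element of $M$; once this is done, any such element $a$ satisfies $a\in F_\Psi^{\Sy_{\Psi|J}}=F_J$, and the subset $aF_{\Psi\smallsetminus J}\subseteq F_\Psi$ is automatically $\Sy_{\Psi|J}$-stable (since $\sigma(af)=a\sigma(f)$ for $\sigma\in\Sy_{\Psi|J}$, $f\in F_{\Psi\smallsetminus J}$), non-zero, and contained in $M$, so by the simplicity of $M$ we conclude $M=aF_{\Psi\smallsetminus J}$.

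The identity $F_\Psi^{\Sy_{\Psi|J}}=F_J$ (and more generally $F_\Psi^{\Sy_{\Psi|T}}=F_T$ for every finite $T\subseteq\Psi$) follows from the standard minimality argument: any element of $F_\Psi$ lies in $F_I$ for a minimal finite $I\subseteq\Psi$, and if some $i\in I\smallsetminus T$ existed one could choose a transposition $(i\;i')\in\Sy_{\Psi|T}$ with $i'\notin I$; invariance under this transposition, combined with the intersection identity $F_I\cap F_{(I\smallsetminus\{i\})\cup\{i'\}}=F_{I\smallsetminus\{i\}}$ inside $F_\Psi$ (which is valid because $\bigotimes_{k,j\in S}F$ is an integral domain for every set $S$, $k$ being algebraically closed in $F$) would contradict the minimality of $I$.

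To produce the required invariant, I construct an $F_{\Psi\smallsetminus J}\langle\Sy_{\Psi|J}\rangle$-linear retraction $\xi\colon F_\Psi\twoheadrightarrow F_{\Psi\smallsetminus J}$, identical on $F_{\Psi\smallsetminus J}$, such that $\xi|_M\neq 0$. Given such a $\xi$, the image $\xi(M)$ is a non-zero $F_{\Psi\smallsetminus J}\langle\Sy_{\Psi|J}\rangle$-submodule of the simple module $F_{\Psi\smallsetminus J}$, hence coincides with $F_{\Psi\smallsetminus J}$, making $\xi|_M\colon M\to F_{\Psi\smallsetminus J}$ a surjection from the simple module $M$ and thus an isomorphism; setting $a:=(\xi|_M)^{-1}(1)\in M$ yields a non-zero $\Sy_{\Psi|J}$-invariant element (by equivariance of $\xi|_M^{-1}$ and the $\Sy_{\Psi|J}$-invariance of $1\in F_{\Psi\smallsetminus J}$). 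The retraction $\xi$ comes from the Hahn-series construction used in the proof of Proposition~\ref{injectivity-K}: embed $F_J\hookrightarrow\overline k((t^{\mathbb Q}))$, extend $F_{\Psi\smallsetminus J}$-linearly to an embedding $F_\Psi\hookrightarrow(F_{\Psi\smallsetminus J}\otimes_k\overline k)((t^{\mathbb Q}))$, take the constant term, and compose with any $k$-linear retract $\overline k\to k$.

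The hard part will be ensuring $\xi|_M\neq 0$ for at least one choice of Hahn embedding. Fixing a non-zero $m\in M$, smoothness places $m$ in $F_{I\cup L}$ for some finite $I\subseteq J$ and $L\subseteq\Psi\smallsetminus J$, so only finitely many factors of $F$ (those indexed by $I$) actually enter $m$; the problem reduces to choosing the embedding of the finitely generated subfield $F_I\subseteq F_J$ so that $m$ has non-zero constant term, a generic and therefore realizable condition on the parameter space of such embeddings.
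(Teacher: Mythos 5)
Your endgame is sound and coincides with the paper's: once a non-zero $\Sy_{\Psi|J}$-equivariant, $F_{\Psi\smallsetminus J}$-linear map $M\to F_{\Psi\smallsetminus J}$ (equivalently a non-zero invariant $a\in M^{\Sy_{\Psi|J}}$) exists, one gets $a\in F_{\Psi}^{\Sy_{\Psi|J}}=F_J$ and $M=aF_{\Psi\smallsetminus J}$ by simplicity. The gap is exactly at the crux: you never establish that some Hahn-series retraction $\xi$ satisfies $\xi|_M\neq 0$. Your last paragraph concedes this and substitutes the assertion that non-vanishing of the constant term of $m$ is ``a generic and therefore realizable condition on the parameter space of such embeddings'', but no parameter space is described, no genericity statement is formulated, and no reason is given why a generic condition would be realizable here. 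Note that for a \emph{fixed} embedding the map $\xi$ can genuinely kill $M$: if $M=aF_{\Psi\smallsetminus J}$ with $a$ of positive $t$-adic valuation, every element of $M$ has zero constant term; so the whole content of the lemma is hidden in the unproved choice of embedding. Two further defects of this route: (i) $F_I$ is \emph{not} a finitely generated extension of $k$ (each tensor factor is a full copy of $F$), so the claimed reduction to ``the finitely generated subfield $F_I$'' is not what one gets; what is true is only that $m=\alpha/\beta$ with $\alpha,\beta\in F_{\Psi\smallsetminus J}\otimes_k\bigotimes_{k,\,i\in I}A_i$ for finitely generated $k$-subalgebras $A_i\subset F$; (ii) the MacLane embedding of $F_I$ into $\overline{k}((t^{\mathbb Q}))$ requires $\mathrm{trdeg}(F|k)$ to be at most continuum, a hypothesis present in Proposition \ref{injectivity-K} but deliberately absent from Lemma \ref{simple-in-K}, so even if completed your argument proves a weaker statement than the lemma.

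The paper closes the crux by a finite-type specialization instead of Hahn series: writing $Q=\alpha/\beta$ as above, it chooses $k$-algebra homomorphisms $\varphi_i:A_i\to k'$ into a finite extension $k'|k$ (possible because the $A_i$ are finitely generated domains, by the Nullstellensatz) such that $\varphi:=\mathrm{id}\otimes\prod_i\varphi_i$ does not kill $\alpha\beta$. Since $\varphi$ touches only the tensor factors indexed by $I\subseteq J$, it is $\Sy_{\Psi|J}$-equivariant and defined on $M$, giving a non-zero morphism $M\otimes_kk'\to F_{\Psi\smallsetminus J}\otimes_kk'$; as both sides are finite direct sums of copies of $M$, respectively of $F_{\Psi\smallsetminus J}$, this yields a non-zero morphism $M\to F_{\Psi\smallsetminus J}$ with no cardinality restriction on $F|k$. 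If you try to make your ``genericity'' claim honest, you will in effect be forced back to exactly this kind of specialization of the finitely many $F$-factors indexed by $I$, at which point the Hahn-series apparatus is unnecessary for this lemma.
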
 
\begin{proof} Let $Q\in F_{\Psi}^{\times}$ be a non-zero element of $M$, so $Q=\alpha/\beta$ 
is a ratio of a pair of elements $\alpha,\beta\in F_{\Psi\smallsetminus J}\otimes_k\bigotimes_{k,~i\in I}A_i$ 
for a finite subset $I\subseteq J$ and a finitely generated $k$-subalgebras $A_i$ of $F$. There is 
a finite field extension $k'|k$ and a collection of $k$-algebra homomorphisms $\varphi_i:A_i\to k'$ 
such that for the $k$-algebra homomorphism $\varphi:=id_{}\otimes\prod_{i\in I}\varphi_i:
F_{\Psi\smallsetminus J}\otimes_k\bigotimes_{k,~i\in I}A_i\to F_{\Psi\smallsetminus J}\otimes_kk'$ 
one has $\varphi(\alpha\beta)\neq 0$. Then $\varphi$ gives rise to a non-zero morphism of 
$F_{\Psi\smallsetminus J}\otimes_kk'\langle\Sy_{\Psi|J}\rangle$-modules 
$M\otimes_kk'\to F_{\Psi\smallsetminus J}\otimes_kk'$. As the 
$F_{\Psi\smallsetminus J}\langle\Sy_{\Psi|J}\rangle$-modules $M\otimes_kk'$ and 
$F_{\Psi\smallsetminus J}\otimes_kk'$ are isomorphic to (finite) direct sums of copies, 
respectively, of $M$ and of $F_{\Psi\smallsetminus J}$, we get $M\cong F_{\Psi\smallsetminus J}$. 
Let $a\in M^{\Sy_{\Psi|J}}\cong k$ be non-zero. Then $M=aF_{\Psi\smallsetminus J}$. \end{proof} 

\begin{theorem} \label{smooth-simple} Let $\Psi$ be a set, $F$ be a field 
and $k$ be a subfield algebraically closed in $F$. 

Assume that transcendence degree of the field extension $F|k$ is at most continuum. 

Let $K\subseteq F_{\Psi}$ be an $\Sy_{\Psi}$-invariant subfield. Then the object $F_{\Psi}$ is 
an injective cogenerator of the category of smooth $K$-semilinear representations of $\Sy_{\Psi}$. 
In particular, {\rm (i)} 
any smooth $K$-semilinear representation of $\Sy_{\Psi}$ can be embedded into a direct product of 
copies of $F_{\Psi}$; {\rm (ii)} any smooth $F_{\Psi}$-semilinear representation of $\Sy_{\Psi}$ 
of finite length is isomorphic to a direct sum of copies of $F_{\Psi}$. \end{theorem}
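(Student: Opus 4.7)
The plan is to reduce to the case $K=F_\Psi$ via an extension-of-scalars adjunction, and then to establish that case by combining Proposition~\ref{injectivity-K} with the Hahn-series construction from its proof.

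For a smooth $K$-semilinear representation $V$, the $F_\Psi\langle\Sy_\Psi\rangle$-module $F_\Psi\otimes_K V$ with its diagonal $\Sy_\Psi$-action is smooth, giving the natural adjunction isomorphism
\[
\Hom_{K\langle\Sy_\Psi\rangle}(V,F_\Psi) \;\cong\; \Hom_{F_\Psi\langle\Sy_\Psi\rangle}(F_\Psi\otimes_K V,F_\Psi).
\]
Since $K\subseteq F_\Psi$ is a field extension, $F_\Psi$ is faithfully flat over $K$, so $F_\Psi\otimes_K(-)$ is exact and faithful. Exactness of this left adjoint forces the forgetful functor to preserve injectives, reducing injectivity of $F_\Psi$ as a smooth $K$-semilinear representation to Proposition~\ref{injectivity-K}. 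Faithfulness ensures $1\otimes v\ne 0$ in $F_\Psi\otimes_K V$ for every $0\ne v\in V$, so any $F_\Psi\langle\Sy_\Psi\rangle$-morphism $F_\Psi\otimes_K V\to F_\Psi$ not killing $1\otimes v$ is adjoint to a $K\langle\Sy_\Psi\rangle$-morphism $V\to F_\Psi$ not killing $v$. Hence it suffices to prove the theorem for $K=F_\Psi$.

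In that case, injectivity is Proposition~\ref{injectivity-K}. For the cogenerator property, let $W$ be a nonzero smooth $F_\Psi$-semilinear representation and $w\in W\setminus\{0\}$; by injectivity I may assume $W$ is cyclic, generated by $w$. Choose a finite $J\subset\Psi$ with $w\in W^{\Sy_{\Psi|J}}$. Proposition~\ref{local-structure} then provides an isomorphism of $F_\Psi\langle\Sy_{\Psi|J}\rangle$-modules
\[
W\;\cong\;\bigoplus_{s=0}^N F_\Psi\langle\tbinom{\Psi\smallsetminus J}{s}\rangle^{\kappa_s}.
\]
A finite-support computation, using that $\Sy_{\Psi|J}$ acts transitively with infinite orbits on $\binom{\Psi\smallsetminus J}{s}$ for $s\ge 1$, yields $W^{\Sy_{\Psi|J}}=F_J^{\kappa_0}$, placing $w$ in the $s=0$ summand. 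I then compose the $F_\Psi$-linear, $\Sy_{\Psi|J}$-equivariant projection $W\to F_\Psi^{\kappa_0}\to F_\Psi$ onto a coordinate in which $w$ has nonzero image with the Hahn-series constant-term map $\xi\colon F_\Psi\to F_{\Psi\smallsetminus J}$ constructed in the proof of Proposition~\ref{injectivity-K}; this map is $F_{\Psi\smallsetminus J}$-linear and $\Sy_{\Psi|J}$-equivariant. Lemma~\ref{iso-restr}, applied with $\Psi'=\Psi\smallsetminus J$, identifies the category of smooth $F_\Psi$-semilinear representations of $\Sy_\Psi$ with that of smooth $F_{\Psi\smallsetminus J}$-semilinear representations of $\Sy_{\Psi\smallsetminus J}$, and sends $F_\Psi$ to $F_{\Psi\smallsetminus J}$; the resulting $F_{\Psi\smallsetminus J}\langle\Sy_{\Psi\smallsetminus J}\rangle$-morphism into $F_{\Psi\smallsetminus J}$ corresponds under this equivalence to the desired $F_\Psi\langle\Sy_\Psi\rangle$-morphism $W\to F_\Psi$ not killing $w$.

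Consequences (i) and (ii) are then formal. Statement (i) is the definition of injective cogenerator in a Grothendieck category. Statement (ii) follows because $F_\Psi$ is a simple object of the $F_\Psi$-semilinear category (its only $F_\Psi$-subspaces are $0$ and $F_\Psi$), so a finite-length object, being injective and cogenerated by $F_\Psi$, splits as a finite direct sum of copies of $F_\Psi$. The main obstacle is the cogenerator step in the $K=F_\Psi$ case, specifically producing a genuinely $\Sy_\Psi$-equivariant morphism from the locally available $\Sy_{\Psi|J}$-equivariant projection; the Hahn-series map and the restriction equivalence of Lemma~\ref{iso-restr} together deliver this upgrade.
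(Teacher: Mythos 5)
Your reduction to the case $K=F_{\Psi}$ via the exact, faithful, smoothness-preserving left adjoint $F_{\Psi}\otimes_K(-)$ is sound, as is the reduction of the cogenerator property to cyclic modules plus Proposition~\ref{injectivity-K}. The gap is exactly at the step you single out as the main obstacle: upgrading the $\Sy_{\Psi|J}$-equivariant functional $\xi\circ p$ to an $\Sy_{\Psi}$-equivariant morphism $W\to F_{\Psi}$ not killing $w$. The equivalence of Lemma~\ref{iso-restr} (with $\Psi'=\Psi\smallsetminus J$) converts morphisms $W'\to F_{\Psi'}$, where $W':=\mathop{\underrightarrow{\lim}}\limits_{I\subset\Psi'}W^{\Sy_{\Psi|I}}$ over finite $I$, into morphisms $W\to F_{\Psi}$; it does not apply to a morphism defined on the restriction of all of $W$ to $\Sy_{\Psi|J}$. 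Your anchor $w$ lies in $W^{\Sy_{\Psi|J}}$, whose coordinates lie in $F_J$, and such elements are in general \emph{not} in $W'$ (already $a\in F_J\smallsetminus k\subset F_{\Psi}$ is fixed by $\Sy_{\Psi|J}$ but by no $\Sy_{\Psi|I}$ with $I$ finite and disjoint from $J$). So what the equivalence actually uses is $(\xi\circ p)|_{W'}$, and nothing in your argument guarantees this is nonzero. Concretely, take $W=F_{\Psi}\langle\Psi\rangle$, cyclic with generator $w=[x]$, $x\in J$: your $p$ is the coefficient of $[x]$, while $W'=F_{\Psi'}\langle\Psi'\rangle$ consists of elements supported on $\Psi'$ with coefficients in $F_{\Psi'}$, so $p|_{W'}=0$ and your recipe outputs the zero morphism (although nonzero morphisms, e.g.\ the augmentation $[y]\mapsto 1$, do exist). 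A second, smaller, defect: even at $w$ itself the composite need not be nonzero, since the constant term $\xi(a_1)$ of a nonzero $a_1\in F_J$ can vanish; this one is repairable (replace $p$ by $a_1^{-1}p$), but it signals that the Hahn-series map alone cannot carry the nonvanishing. In Proposition~\ref{injectivity-K} the analogous nonvanishing is anchored on the copy of $K'$ inside $E'$ coming from the essential extension, on which the maps are arranged to be the identity; in your cogenerator setting there is no such anchor inside $W'$.

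The paper closes this hole by a different route, which is precisely the ingredient you tried to do without: it first proves that every smooth simple $F_{\Psi}\langle\Sy_{\Psi}\rangle$-module is isomorphic to $F_{\Psi}$ --- the restricted simple module $M'$ must embed into one of the summands of the local decomposition, there are no simple $F_{\Psi\smallsetminus J}\langle\Sy_{\Psi|J}\rangle$-submodules in $F_{\Psi}\langle\binom{\Psi\smallsetminus J}{s}\rangle$ for $s>0$ by Lemmas~\ref{no-simple-submod} and \ref{simple-in-K}, and simple submodules of $F_{\Psi}$ are isomorphic to $F_{\Psi\smallsetminus J}$ by Lemma~\ref{simple-in-K} --- and then obtains the cogenerator property by mapping the cyclic submodule $\langle v\rangle$ onto a simple quotient, necessarily isomorphic to $F_{\Psi}$ and automatically nonzero at the generator $v$, and extending to the ambient module by injectivity. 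Your counterexample-free steps (the adjunction reduction, the invariants computation $W^{\Sy_{\Psi|J}}=F_J^{\kappa_0}$, the deduction of (i) and (ii)) are fine, but to complete the proof you need either the paper's classification of simples or some substitute argument producing a nonzero morphism $W'\to F_{\Psi'}$, which your construction does not provide.
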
 
\begin{proof} By Proposition~\ref{local-structure}, 
for any smooth simple $F_{\Psi}\langle\Sy_{\Psi}\rangle$-module $M$ there is a finite 
subset $J\subset\Psi$ and an isomorphism of $F_{\Psi}\langle\Sy_{\Psi|J}\rangle$-modules 
$\bigoplus_{s=0}^NF_{\Psi}\langle\binom{\Psi\smallsetminus J}{s}\rangle^{\kappa_s}
\stackrel{\sim}{\longrightarrow}M$ for some integer $N,\kappa_0,\dots,\kappa_N\ge 0$.
By Lemma~\ref{iso-restr}, the $F_{\Psi}\langle\Sy_{\Psi}\rangle$-module $M$ 
admits a simple $F_{\Psi\smallsetminus J}\langle\Sy_{\Psi|J}\rangle$-submodule $M'$. 
By Lemmas~\ref{no-simple-submod} and \ref{simple-in-K}, there are no simple 
$F_{\Psi\smallsetminus J}\langle\Sy_{\Psi|J}\rangle$-submodules in 
$F_{\Psi}\langle\binom{\Psi\smallsetminus J}{s}\rangle$ for $s>0$, so $M'$ is isomorphic to 
$F_{\Psi\smallsetminus J}$, again by Lemma \ref{simple-in-K}, and thus, 
$M$ is isomorphic to $F_{\Psi}$. 

We have to show that for any smooth simple $F_{\Psi}\langle\Sy_{\Psi}\rangle$-module 
$V$ and any non-zero $v\in V$ there is a morphism $V\to F_{\Psi}$ non-vanishing at $v$. 
The $F_{\Psi}\langle\Sy_{\Psi}\rangle$-submodule $\langle v\rangle$ of $V$ generated by 
$v$ admits a simple quotient, which is just shown to be isomorphic to $F_{\Psi}$, i.e., 
there is a non-zero morphism $\varphi:\langle v\rangle\to F_{\Psi}$. As $F_{\Psi}$ is 
injective (Proposition \ref{injectivity-K}), $\varphi$ extends to $V$. \end{proof} 

\begin{corollary} \label{all-irr-in-triv} Let $k$ be a field and $\Psi$ be an 
infinite set. Let $\Sy_{\Psi}$ be the group of all permutations of the set 
$\Psi$ acting naturally on the field $F_{\Psi}$. Let $K\subset F_{\Psi}$ be an 
$\Sy_{\Psi}$-invariant subfield over $k$. Then any smooth $K$-semilinear irreducible 
representation of $\Sy_{\Psi}$ can be embedded into $F_{\Psi}$. \end{corollary}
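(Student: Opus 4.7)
The plan is to deduce this immediately from Theorem~\ref{smooth-simple} applied to the smooth irreducible $K$-semilinear representation $V$, using the fact that $F_{\Psi}$ is a cogenerator.

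More concretely, I would proceed as follows. First, by Theorem~\ref{smooth-simple}(i) applied to the $\Sy_{\Psi}$-invariant subfield $K \subseteq F_{\Psi}$, the object $V$ can be embedded into a direct product $\prod_{i \in I} F_{\Psi}$ of copies of $F_{\Psi}$ in the category of smooth $K$-semilinear representations of $\Sy_{\Psi}$. Let $\iota : V \hookrightarrow \prod_{i \in I} F_{\Psi}$ denote this embedding.

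Next, since $\iota$ is non-zero, at least one component $\pi_i \circ \iota : V \to F_{\Psi}$ is non-zero, where $\pi_i$ is the projection onto the $i$-th factor. Its kernel is a proper $K\langle\Sy_{\Psi}\rangle$-submodule of $V$, so irreducibility of $V$ forces this kernel to vanish. Hence $\pi_i \circ \iota$ is the desired embedding $V \hookrightarrow F_{\Psi}$. \qed

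There is no serious obstacle: everything has been set up by Proposition~\ref{injectivity-K} (injectivity of $F_{\Psi}$) and by the simple-quotient argument in Theorem~\ref{smooth-simple} showing that $F_{\Psi}$ is a cogenerator of the larger category of smooth $K$-semilinear representations. The only point worth noting is that one uses the full cogenerator statement of Theorem~\ref{smooth-simple} rather than just the classification of simple $F_{\Psi}\langle\Sy_{\Psi}\rangle$-modules, because a priori a simple smooth $K\langle\Sy_{\Psi}\rangle$-module need not be an $F_{\Psi}$-semilinear representation.
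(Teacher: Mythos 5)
Your deduction is formally valid: Theorem \ref{smooth-simple} as stated asserts that $F_{\Psi}$ is a cogenerator of the category of smooth $K\langle\Sy_{\Psi}\rangle$-modules, and composing an embedding $V\hookrightarrow\prod_{i}F_{\Psi}$ with a projection at which it does not vanish gives an injection $V\hookrightarrow F_{\Psi}$ by simplicity of $V$. However, this is not the route the paper takes, and the difference matters. The paper never invokes the cogenerator property over the subfield $K$; it uses only the part of Theorem \ref{smooth-simple} actually established via Proposition \ref{local-structure}, namely that every smooth simple $F_{\Psi}\langle\Sy_{\Psi}\rangle$-module is isomorphic to $F_{\Psi}$, and then runs the scalar-extension argument of Remark \ref{irred-contained-irred-semilin}: the smooth $F_{\Psi}\langle\Sy_{\Psi}\rangle$-module $V\otimes_{K}F_{\Psi}$ is cyclic, hence admits a simple quotient, which must be $F_{\Psi}$; the composite $V\to V\otimes_{K}F_{\Psi}\to F_{\Psi}$ is non-zero (the image of $V$ generates $V\otimes_{K}F_{\Psi}$ over $F_{\Psi}$), hence injective since $V$ is simple. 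The payoff of the paper's argument is that it rests only on the $F_{\Psi}$-semilinear half of Theorem \ref{smooth-simple}, whose proof (via Propositions \ref{local-structure} and \ref{injectivity-K}) is the part spelled out in detail, whereas the injective-cogenerator claim over an arbitrary $\Sy_{\Psi}$-invariant $K$, which you cite wholesale, is precisely the kind of statement one would bootstrap \emph{from} this corollary (embed a simple quotient of a cyclic submodule into $F_{\Psi}$ and extend by injectivity); leaning on it here therefore risks circularity, and in any case proves nothing beyond what the cited sentence already contains. Your closing remark, that a simple smooth $K\langle\Sy_{\Psi}\rangle$-module need not carry an $F_{\Psi}$-structure, identifies exactly the difficulty that the paper's tensoring-up step is designed to resolve.
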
 
\begin{proof} For any smooth simple $K\langle\Sy_{\Psi}\rangle$-module 
$V$ the $F_{\Psi}\langle\Sy_{\Psi}\rangle$-module $V\otimes_KF_{\Psi}$ admits 
a simple quotient isomorphic, by Theorem \ref{smooth-simple}, to $F_{\Psi}$. 
This means that $V$ can be embedded into $F_{\Psi}$. \end{proof} 

\begin{corollary} \label{some-injective} Let $k$ be a field and $\Psi$ be an infinite set. Let 
$\Sy_{\Psi}$ be the group of all permutations of the set $\Psi$ acting naturally on the field 
$k(\Psi)$. Then the smooth $k(\Psi)$-semilinear representation $k(\Psi)\langle\binom{\Psi}{s}\rangle$ 
of $\Sy_{\Psi}$ is indecomposable and injective for any integer $s\ge 0$. \end{corollary}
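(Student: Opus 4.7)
I plan to verify indecomposability and injectivity in turn. Throughout, $k(\Psi) = F_{\Psi}$ for $F = k(x)$, of transcendence degree $1$ over $k$, so Proposition~\ref{injectivity-K} and Theorem~\ref{smooth-simple} apply: $k(\Psi)$ is an injective cogenerator of the ambient category.

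For \emph{indecomposability}, I invoke Corollary~\ref{def-binom}(\ref{any-is-essential}): every non-zero $k(\Psi)\langle\Sy_{\Psi}\rangle$-submodule of $M := k(\Psi)\langle\binom{\Psi}{s}\rangle$ is essential, so a putative non-trivial direct sum decomposition $M = A\oplus B$ with both summands non-zero is precluded by $A\cap B = 0$.

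For \emph{injectivity}, the plan is to show $M$ has no proper essential extension. Given such an $E \supsetneq M$, I reduce to the case $E$ finitely generated by replacing $E$ with $M + k(\Psi)\langle\Sy_{\Psi}\rangle\cdot e$ for some $e\in E\smallsetminus M$. Proposition~\ref{local-structure} then yields a finite $J\subset\Psi$ and an isomorphism $E \cong \bigoplus_{r=0}^{N} k(\Psi)\langle\binom{\Psi\smallsetminus J}{r}\rangle^{\kappa_r}$ of $k(\Psi)\langle\Sy_{\Psi|J}\rangle$-modules. A direct inspection of the $\Sy_{\Psi|J}$-orbit structure of $\binom{\Psi}{s}$ identifies $M$ with the $k(\Psi)\langle\Sy_{\Psi|J}\rangle$-submodule $\bigoplus_{r=0}^{s} k(\Psi)\langle\binom{\Psi\smallsetminus J}{r}\rangle^{\binom{\#J}{s-r}}$ of $E$, of maximal level $s$.

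The main obstacle is to force $N = s$ and to match the multiplicities. By Corollary~\ref{def-binom}(\ref{level}) combined with the level lower bound of Lemma~\ref{growth}, there are no non-zero $k(\Psi)\langle\Sy_{\Psi|J}\rangle$-morphisms between $k(\Psi)\langle\binom{\Psi\smallsetminus J}{r}\rangle$ and $k(\Psi)\langle\binom{\Psi\smallsetminus J}{r'}\rangle$ for $r\neq r'$, so the embedding $M\hookrightarrow E$ decomposes level-by-level into embeddings $M_r\hookrightarrow E_r$. Any extra level-$r$ summand of $E$ with $r>s$ then lies outside the image of $M$, and the $k(\Psi)\langle\Sy_{\Psi}\rangle$-submodule of $E$ generated by a non-zero element of this summand has $\Sy_{\Psi|J}$-growth polynomial of degree $r>s$ by Lemma~\ref{growth}; it cannot meet $M$ non-trivially (since every non-zero submodule of $M$ has growth polynomial of degree exactly $s$), contradicting essentiality. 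Once $N = s$, matching the growth counts of Lemma~\ref{growth} level by level yields $\kappa_r = \binom{\#J}{s-r}$ for all $r\le s$, whence $E = M$ as $k(\Psi)\langle\Sy_{\Psi|J}\rangle$-modules, and therefore as $k(\Psi)\langle\Sy_{\Psi}\rangle$-modules by the equivalence in Lemma~\ref{iso-restr}.
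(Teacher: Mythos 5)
The indecomposability part is correct and, if anything, more explicit than the paper, which doesn't separately argue it; Corollary~\ref{def-binom}(\ref{any-is-essential}) does the job.

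The injectivity part, however, is where you diverge sharply from the paper, and your route has genuine gaps. The paper's argument is a short indirect trick: let $K\subset k(\Psi)$ be the subfield generated over $k$ by the \emph{squares} $\{x^2\}_{x\in\Psi}$. This $K$ is an $\Sy_{\Psi}$-invariant subfield of $F_\Psi=k(\Psi)$, so Theorem~\ref{smooth-simple} gives that $k(\Psi)$ is injective in smooth $K\langle\Sy_\Psi\rangle$-modules; moreover $k(\Psi)=\bigoplus_{s\ge 0}K\langle\binom{\Psi}{s}\rangle$ as $K\langle\Sy_\Psi\rangle$-modules (the $K$-basis $\prod_{t\in S}t$), so each $K\langle\binom{\Psi}{s}\rangle$ is a direct summand of an injective, hence injective. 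Finally $x\mapsto x^2$ is an $\Sy_\Psi$-equivariant field isomorphism $k(\Psi)\to K$, so the category of smooth $K$-semilinear representations is isomorphic to that of smooth $k(\Psi)$-semilinear representations, carrying $K\langle\binom{\Psi}{s}\rangle$ to $k(\Psi)\langle\binom{\Psi}{s}\rangle$. You do not use this and instead try to prove directly that $M$ has no proper essential extension.

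Three concrete problems with your direct argument. First, the assertion that ``there are no non-zero $k(\Psi)\langle\Sy_{\Psi|J}\rangle$-morphisms between $k(\Psi)\langle\binom{\Psi\smallsetminus J}{r}\rangle$ and $k(\Psi)\langle\binom{\Psi\smallsetminus J}{r'}\rangle$ for $r\neq r'$'' is false in the direction $r>r'$: for instance, $[S]\mapsto\sum_{T\subset S,\,\#T=r'}[T]$ is a non-zero $\Sy_\Psi$-morphism from level $r$ to level $r'<r$. Corollary~\ref{def-binom}(\ref{level}) only forbids isomorphic \emph{submodules}, and Lemma~\ref{growth} only kills morphisms from lower to higher level; so the embedding $M\hookrightarrow E$ is merely block lower-triangular, not level-diagonal. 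Second, for the step excluding levels $r>s$: the summand $E_r$ is only a $\Sy_{\Psi|J}$-submodule of $E$, not a $\Sy_\Psi$-submodule, so you must pass to the $\Sy_\Psi$-submodule it generates; that larger submodule need not stay inside $\bigoplus_{r''>s}E_{r''}$, and you have not shown its intersection with $M$ is zero. (The paper handles the analogous difficulty in Proposition~\ref{injectivity-K} by first replacing $E$ with $E'=\varinjlim_{I\subset\Psi\smallsetminus J}E^{\Sy_{\Psi|I}}$ via Lemma~\ref{iso-restr}, which you would need here too.) Third, even granting $N=s$ and matching multiplicities $\kappa_r=\binom{\#J}{s-r}$, you cannot conclude $E=M$ merely from the count: an injection between two modules with identical Krull--Schmidt decompositions need not be surjective. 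You would need an extra argument, e.g.\ exploiting that the endomorphism rings of the $k(\Psi)\langle\binom{\Psi\smallsetminus J}{r}\rangle$ are fields (Lemma~\ref{indecomp}) together with the lower-triangular structure, plus an identification of the diagonal blocks — none of which is in your sketch.
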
 
\begin{proof} Let $K\subset k(\Psi)$ be the subfield generated over $k$ by squares of 
the elements of $\Psi$. By Theorem \ref{smooth-simple}, $k(\Psi)$ is an injective object 
of the category of smooth $K\langle\Sy_{\Psi}\rangle$-modules. On the other hand, 
there is an isomorphism $\bigoplus_{s\ge 0}K\langle\binom{\Psi}{s}\rangle
\stackrel{\sim}{\longrightarrow}k(\Psi)$, $[S]\mapsto\prod_{t\in S}t\cdot K$, 
so each $K\langle\binom{\Psi}{s}\rangle$ is isomorphic to a direct summand of 
the injective smooth $K$-semilinear representation $k(\Psi)$ of $\Sy_{\Psi}$. \end{proof} 

\begin{proof}[Proof of Theorem \ref{Gabriel-spectrum}] 
Recall that the points of the Gabriel spectrum $\mathrm{Zar}(C)$ of a Grothendieck 
category $C$ are isomorphism classes of indecomposable injectives. Base of opens consists of 
sets of the form $[F]:=\{E\in\mathrm{Zar}(C)~|~\Hom(F,E)=0\}$ as $F$ ranges over the finitely 
presented objects. As $[F]\cap[G]=[F\oplus G]$, these sets are closed under finite intersection, 
so an arbitrary open set will have the form $\bigcup_i[F_i]$ with some finitely presented $F_i$. 

By Corollary \ref{some-injective}, we only have to show that any smooth finitely generated 
$F_{\Psi}\langle\Sy_{\Psi}\rangle$-module $V$ can be embedded into a direct sum of 
$F_{\Psi}\langle\binom{\Psi}{s}\rangle$ for several integer $s\ge 0$. 

By Proposition \ref{local-structure}, there is a subset $\Psi'\subset\Psi$ with finite 
complement $J$ and an isomorphism of $F_{\Psi}\langle\Sy_{\Psi|J}\rangle$-modules 
$\bigoplus_{s=0}^NF_{\Psi}\langle\binom{\Psi'}{s}\rangle^{\kappa_s}
\stackrel{\sim}{\longrightarrow}V$ for some integer $N,\kappa_0,\dots,\kappa_N\ge 0$. 
In particular, $V':=\mathop{\underrightarrow{\lim}}\limits_{I\subset\Psi'}V^{\Sy_{\Psi|I}}$, 
where $I$ runs over the finite subsets of $\Psi'$, can be embedded into 
$\bigoplus_{s=0}^NF_{\Psi}\langle\binom{\Psi'}{s}\rangle^{\kappa_s}$. 

By Lemma \ref{iso-restr}, it suffices to show that the $F_{\Psi'}\langle\Sy_{\Psi|J}\rangle$-module 
$F_{\Psi}\langle\binom{\Psi'}{s}\rangle$ is isomorphic to a direct sum of modules 
$F_{\Psi'},F_{\Psi'}\langle\Psi'\rangle,F_{\Psi'}\langle\binom{\Psi'}{2}\rangle,\dots$ 

We proceed by induction on the order of $J$, the case of empty $J$ being trivial. 
Suppose that this is known in the case $s=0$. As $F_{\Psi}\langle\binom{\Psi'}{s}\rangle
=F_{\Psi}\otimes_{F_{\Psi'}}F_{\Psi'}\langle\binom{\Psi'}{s}\rangle$, we only have to 
check that $F_{\Psi'}\langle\binom{\Psi'}{n}\times\binom{\Psi'}{s}\rangle\cong 
\bigoplus_{j=0}^{n+s}F_{\Psi'}\langle\binom{\Psi'}{j}\rangle^{\oplus N_j}$. It 
is clear that $F_{\Psi'}\langle\binom{\Psi'}{n}\times\binom{\Psi'}{s}\rangle\cong 
\bigoplus_{j=0}^{\min(n,s)}F_{\Psi'}\langle\binom{\Psi'}{j,n-j,s-j}\rangle$, where 
$\binom{\Psi'}{j,n-j,s-j}$ denotes the triples of disjoint subsets of $\Psi'$ of 
orders $j,n-j,s-j$. By Lemma~\ref{finite-index-split}, $F_{\Psi'}\langle\binom{\Psi'}{j,n-j,s-j}\rangle$ 
is isomorphic to a direct sum of copies of $F_{\Psi'}\langle\binom{\Psi'}{n+s-j}\rangle$. 

For the induction step when $s=0$, fix some $t\in J$ and set $L:=F_{\Psi'\sqcup(J\setminus\{t\})}$. 
Then, according to partial fraction decomposition, $L(t)=\bigoplus_{n=0}^{\infty}L\cdot t^n\oplus
\bigoplus_{m=1}^{\infty}\bigoplus_O\bigoplus_{j=0}^{\deg O-1}t^j\bigoplus_{P\in O}L\cdot P(t)^{-m}$, 
where $O$ runs over the $\Sy_{\Psi'}$-orbits of (non-constant) irreducible monic polynomials over $L$. In 
other words, $L(t)$ is a direct sum of summands isomorphic to $L$ and to $L\langle\Sy_{\Psi'}/U_{P,m}\rangle$ 
for some open subgroups $U_{P,m}\subseteq\Sy_{\Psi'}$. Applying Lemmas \ref{open-subgrps-descr} 
and \ref{finite-index-split} completes the induction step. \end{proof} 

\begin{proof}[Proof of Theorem \ref{smooth-simple-degree-0}] 
By Theorem \ref{smooth-simple}, $k(\Psi)$ is an injective cogenerator of the category of 
smooth $V_0\langle\Sy_{\Psi}\rangle$-modules. To show that the subobjects $V_d\subset k(\Psi)$ 
form a system of injective cogenerators, it suffices to verify that they are direct summands 
of $k(\Psi)$ and that $k(\Psi)$ embeds into $\prod_{d\in\mathbb Z}V_d$. 

There is a unique discrete valuation $v:k(\Psi)^{\times}\to\mathbb Z$ trivial on $V_0^{\times}$ 
and such that $v(x)=-1$ for some (equivalently, any) $x\in\Psi$. The valuation $v$ is 
$\Sy_{\Psi}$-invariant and completion of $k(\Psi)$ with respect to $v$ is isomorphic to the field 
of Laurent series $V_0((x^{-1}))=\mathop{\underrightarrow{\lim}}\limits_n\prod_{d\le n}V_0\cdot x^d
=\mathop{\underrightarrow{\lim}}\limits_n\prod_{d\le n}V_d\subset\prod_{d\in\mathbb Z}V_d$, 
so for each $d\in\mathbb Z$ there is a morphism of $V_0\langle\Sy_{\Psi}\rangle$-modules 
$k(\Psi)\to V_d$ splitting the inclusion $V_d\subset k(\Psi)$. This implies that all $V_d$ 
are direct summands of $k(\Psi)$, and thus, they are injective. \end{proof} 

\begin{remark} It follows from the above that the maximal semisimple 
$V_0\langle\Sy_{\Psi}\rangle$-submodule in $k(\Psi)$ coincides with 
$\bigoplus_{d\in\mathbb Z}V_d\subset k(\Psi)$. \end{remark} 

\begin{proof}[Proof of Theorem \ref{unip}] By Theorem \ref{smooth-simple}, $k(\Psi)$ is 
an injective cogenerator of the category of smooth $K\langle\Sy_{\Psi}\rangle$-modules. 
One has $k(\Psi)=K[x]\oplus\bigoplus_R\bigoplus_{m\ge 1}V^{(m)}_R$, 
where $R$ runs over the $\Sy_{\Psi}$-orbits of non-constant irreducible monic 
polynomials in $K[x]$ and $V^{(m)}_R$ is the $K$-linear envelope of $P(x)/Q^m$ 
for all $Q\in R$ and $P\in K[x]$ with $\deg P<\deg Q$. As $k(\Psi)$ is injective, 
its direct summand $K[x]$ is also injective, as well as $V^{(m)}_R$ for all $R$ and $m$. 

Each $V^{(m)}_R$ is filtered by $V^{(j,m)}_R$, 
$0\le j<\deg R$, where $V^{(j,m)}_R$ is the $K$-linear envelope of $P(x)/Q^m$ for all 
$Q\in R$ and $P\in K[x]$ with $\deg P\le j$. Clearly, these decomposition and filtration 
are independent of $x$. It suffices to show that the only simple 
$K\langle\Sy_{\Psi}\rangle$-submodule of $K[x]$ is $K$ and there are no simple 
$K\langle\Sy_{\Psi}\rangle$-submodules in $V^{(j,m)}_R$ for any $R$, $m$ and $j$. 

Suppose first that $V\subset K[x]$. Let $Q\in V$ be a (non-zero) monic polynomial in $x$ of 
minimal degree. Then $V$ contains $Q-\sigma Q$ for any $\sigma\in\Sy_{\Psi}$. If $\sigma Q\neq Q$ 
for some $\sigma\in\Sy_{\Psi}$ then $Q-\sigma Q\neq 0$ and $\deg(Q-\sigma Q)<\deg Q$, 
contradicting our assumption, so $\sigma Q=Q$ for any $\sigma\in\Sy_{\Psi}$, i.e., $Q\in k$. 

Suppose now that $V\subset V^{(j,m)}_R$. One has isomorphisms 
\[x^j\cdot:V^{(0,m)}_R\stackrel{\sim}{\longrightarrow}V^{(j,m)}_R/V^{(j-1,m)}_R\] 
for all $0\le j<\deg R$, so it suffices to check that $V^{(0,m)}_R$ admits no simple 
$K\langle\Sy_{\Psi}\rangle$-submodules. Fix some $Q\in R$. Then the morphism 
$K\langle\Sy_{\Psi}/\mathrm{Stab}_Q\rangle\to V^{(0,m)}_R$, $[g]\mapsto(gQ)^{-m}$, is an isomorphism. 
By Lemma \ref{no-simple-submod}, there are no simple submodules in $K\langle\Sy_{\Psi}/\mathrm{Stab}_Q\rangle$. 

Thus, any smooth $K\langle\Sy_{\Psi}\rangle$-module $V$ of finite length is a 
finite-dimensional $K$-vector space. Set $N:=\dim_KV$. By Theorem~\ref{smooth-simple}, 
the $\Sy_{\Psi}$-action on $V$ in a fixed basis is given by the 1-cocycle 
$f_{\sigma}=\Phi(I)\Phi(\sigma I)^{-1}$ for some finite $I\subset\Psi$ and 
some $\Phi(X)\in\mathrm{GL}_Nk(I)$. As $f_{\sigma}\in\mathrm{GL}_NK$, one has 
$\Phi(T_{\lambda}I)\Phi(T_{\lambda}\sigma I)^{-1}=\Phi(I)\Phi(\sigma I)^{-1}$ 
for any $\lambda\in\overline{k}$ and any $\sigma\in\Sy_{\Psi}$, where 
$T_{\lambda}x=x+\lambda$ for any $x\in\Psi\subset k(\Psi)$, and therefore, 
$\Phi(I)^{-1}\Phi(T_{\lambda}I)\in(\mathrm{GL}_Nk(I))^{\Sy_{\Psi}}=\mathrm{GL}_Nk$. 
Then $\lambda\mapsto\Phi(I)^{-1}\Phi(T_{\lambda}I)$ gives rise to a homomorphism 
of algebraic $k$-groups $\mathbb G_{a,k}\to\mathrm{GL}_{N,k}$. 
Changing the basis, we may assume that $\Phi(I)^{-1}\Phi(T_{\lambda}I)$ is block-diagonal 
with unipotent blocks corresponding to indecomposable direct summands of $V$. 
For any integer $N\ge 1$ the unique isomorphism class of smooth $K$-semilinear 
indecomposable representations of $\Sy_{\Psi}$ of length $N$ is presented by 
$\bigoplus_{j=0}^{N-1}x^jK\subset k(\Psi)$ for any $x\in\Psi$. 

To show that the object $K[x]$ is a cogenerator, it suffices to verify that for any 
smooth $K\langle\Sy_{\Psi}\rangle$-module $V$ and any non-zero $v\in V$ there is 
a morphism $V\to K[x]$ non-vanishing at $v$. The $K\langle\Sy_{\Psi}\rangle$-submodule 
in $V$ generated by $v$ admits a simple quotient, which is isomorphic, as we know, 
to $K$. So this submodule admits a morphism to $K[x]$ non-vanishing at $v$. 
By injectivity of $K[x]$, this morphism extends to $V\to K[x]$. \end{proof} 

\begin{corollary} In the setting of Theorem \ref{unip}, the smooth $K$-semilinear 
representations $K[x]$ and $K\langle\binom{\Psi}{s}\rangle$ of $\Sy_{\Psi}$ are 
indecomposable and injective for any integer $s\ge 1$. \end{corollary}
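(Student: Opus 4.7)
The indecomposable injectivity of $K[x]$ is immediate from Theorem~\ref{unip}(i), which asserts $K[x]$ is the injective envelope of the simple $K\langle\Sy_\Psi\rangle$-module $K$; any injective envelope of a simple module is an indecomposable injective.

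For $K\langle\binom{\Psi}{s}\rangle$ with $s\ge 1$, indecomposability follows at once from Corollary~\ref{def-binom}(\ref{any-is-essential}): the module is uniform (every non-zero submodule is essential), hence indecomposable. For injectivity, my plan is first to exhibit $K\langle\Psi\rangle$ as a direct summand of the injective module $k(\Psi)$: in the partial fraction decomposition $k(\Psi) = K[x]\oplus\bigoplus_R\bigoplus_m V_R^{(m)}$ established in the proof of Theorem~\ref{unip}, the summand $V_R^{(1)}$ for $R$ the $\Sy_\Psi$-orbit of the degree-one polynomials $y = x - (x-y) \in K[x]$ (for $y\in\Psi$, with $x-y\in K$) is $K\langle\Sy_\Psi\rangle$-isomorphic to $K\langle\Sy_\Psi/\Sy_{\Psi|\{y\}}\rangle = K\langle\Psi\rangle$. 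Since $k(\Psi)$ is injective over $K\langle\Sy_\Psi\rangle$ (Theorem~\ref{smooth-simple}), so is $K\langle\Psi\rangle$, settling the case $s=1$.

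For $s\ge 2$, I proceed by a tensor-product bootstrap: since $K\langle\Psi\rangle$ is a direct summand of $k(\Psi)$, the $s$-th tensor power $K\langle\Psi\rangle^{\otimes_K s} \cong K\langle\Psi^s\rangle$ (with diagonal $\Sy_\Psi$-action) is a direct summand of $k(\Psi)\otimes_K K\langle\Psi\rangle^{\otimes(s-1)}\cong k(\Psi)\langle\Psi^{s-1}\rangle$. One decomposes $k(\Psi)\langle\Psi^{s-1}\rangle$ as a $k(\Psi)\langle\Sy_\Psi\rangle$-module into its finitely many $\Sy_\Psi$-orbit summands, each of which by Lemma~\ref{finite-index-split} takes the form $k(\Psi)\langle\binom{\Psi}{j}\rangle\otimes_{k(\Psi)^{\Sy_{\Psi,T_j}}}k(\Psi)^{\Sy_{\Psi|T_j}}$ for some $T_j$ of size $j\le s-1$; each such tensor product is a finite direct sum of copies of the injective $k(\Psi)\langle\binom{\Psi}{j}\rangle$ (Corollary~\ref{some-injective}) along a finite Galois extension inside $k(\Psi)$, hence injective. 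Since restriction of scalars from $k(\Psi)\langle\Sy_\Psi\rangle$-modules to $K\langle\Sy_\Psi\rangle$-modules preserves injectives (it is right adjoint to $-\otimes_K k(\Psi)$), $k(\Psi)\langle\Psi^{s-1}\rangle$ is injective over $K\langle\Sy_\Psi\rangle$. Thus $K\langle\Psi^s\rangle$ is injective, as is its orbit-summand $K\langle\Psi^{(s)}\rangle$ of ordered $s$-tuples of distinct elements. Finally, Lemma~\ref{finite-index-split} identifies $K\langle\Psi^{(s)}\rangle\cong K\langle\binom{\Psi}{s}\rangle\otimes_{K^{\Sy_{\Psi,T}}}K^{\Sy_{\Psi|T}}$ for any fixed $T\in\binom{\Psi}{s}$, and the trace-averaging retraction $(s!)^{-1}\mathrm{tr}\colon K^{\Sy_{\Psi|T}}\to K^{\Sy_{\Psi,T}}$ exhibits $K\langle\binom{\Psi}{s}\rangle$ as a direct summand of $K\langle\Psi^{(s)}\rangle$, hence injective.

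The main obstacle I expect is the final splitting: the trace-averaging retraction requires $s!$ to be invertible in $K$, so residual characteristics dividing $s!$ demand an alternative argument. A natural fallback is a direct proof that any essential extension of $K\langle\binom{\Psi}{s}\rangle$ is trivial, invoking Proposition~\ref{local-structure} to decompose any cyclic extension over $\Sy_{\Psi|J}$ and Corollary~\ref{def-binom}(\ref{level}) (forbidding non-zero isomorphic submodules at distinct levels) to force the extension to remain within the given uniform module of pure level $s$.
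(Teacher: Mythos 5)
Your indecomposability arguments are sound, and your treatment of $s=1$ is, up to phrasing, the paper's own argument: the degree-one orbit of $y = x-(x-y)$ has stabilizer $\Sy_{\Psi|\{y\}}$, so the summand $V_R^{(1)}$ of $k(\Psi)$ is $K\langle\Psi\rangle$, and injectivity of $k(\Psi)$ passes to the summand. For $s\ge 2$, however, you take a genuinely different route from the paper. The paper argues in one line that $K\langle\binom{\Psi}{s}\rangle$ is a direct summand of an appropriate $V_R^{(m)}$ for a higher-degree orbit $R$, whereas you run a tensor-product bootstrap through $k(\Psi)\langle\Psi^{s-1}\rangle$ and the scalar-extension/restriction adjunction. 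The middle of that bootstrap is fine: $k(\Psi)\otimes_K-$ is exact with right adjoint the restriction functor, so injectivity descends, and the orbit summands of $k(\Psi)\langle\Psi^{s-1}\rangle$ really are finite direct sums of copies of the injectives $k(\Psi)\langle\binom{\Psi}{j}\rangle$ because $\Sy_T$ acts faithfully on $k(\Psi)^{\Sy_{\Psi|T}}=k(T)$ in every characteristic.

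The final step is where you correctly sense trouble, but you misdiagnose it. The obstruction is not the non-invertibility of $s!$: once Lemma~\ref{finite-index-split} gives the isomorphism $K\langle\Psi^{(s)}\rangle\cong K\langle\binom{\Psi}{s}\rangle\otimes_{K^{\Sy_{\Psi,T}}}K^{\Sy_{\Psi|T}}$ with $K^{\Sy_{\Psi|T}}$ free of rank $s!$ over $K^{\Sy_{\Psi,T}}$, \emph{any} $K^{\Sy_{\Psi,T}}$-linear projection sending $1\mapsto 1$ splits off $K\langle\binom{\Psi}{s}\rangle$, with no trace or averaging needed. The genuine problem is upstream: Lemma~\ref{finite-index-split} (through its reliance on Lemma~\ref{no-simple-submod} and Hilbert 90) needs $H/U$ to act faithfully on $K^U$, i.e.\ $\Sy_T$ to act faithfully on $K^{\Sy_{\Psi|T}}$. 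Here $K^{\Sy_{\Psi|T}}=k\bigl(\{t-t':t,t'\in T\}\bigr)$, and in characteristic $2$ with $|T|=2$ the transposition acts trivially ($t_1-t_2\mapsto t_2-t_1=t_1-t_2$). In that case $K^{\Sy_{\Psi|T}}=K^{\Sy_{\Psi,T}}$, the displayed tensor product collapses to $K\langle\binom{\Psi}{2}\rangle$, which is strictly smaller than $K\langle\Psi^{(2)}\rangle$; indeed one computes $\End_{K\langle\Sy_\Psi\rangle}(K\langle\Psi^{(2)}\rangle)\cong K^{\Sy_{\Psi|T}}[\tau]/(\tau-1)^2$, a local ring, so $K\langle\Psi^{(2)}\rangle$ is an \emph{indecomposable} injective properly containing $K\langle\binom{\Psi}{2}\rangle$, and $K\langle\binom{\Psi}{2}\rangle$ cannot then be its direct summand. (For $s\ge 3$, or in characteristic $\ne 2$, the $\Sy_T$-action on $K^{\Sy_{\Psi|T}}$ is faithful and your argument goes through cleanly.) So your fallback is not an optional precaution but is actually required in the exceptional case, and it is only sketched; as written, the proof is complete only away from characteristic $2$, $s=2$ — a case in which the very conclusion of the corollary becomes delicate and the paper's own ``appropriate $V_R^{(m)}$'' would also need to be supplied explicitly.
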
 
\begin{proof} It is shown in the proof Theorem \ref{unip} that $V^{(m)}_R$ 
is injective for all $R$ and $m$. Then $K\langle\binom{\Psi}{s}\rangle$ 
is isomorphic to a direct summand of an appropriate $V^{(m)}_R$. \end{proof} 

\subsection{Noetherian properties of smooth semilinear representations of $\Sy_{\Psi}$} 
\begin{lemma} \label{indecomp} Let $G$ be a group acting on a field $K$. 
Let $U$ be a subgroup of $G$ such that $(G/U)^U=\{[U]\}$ {\rm (}i.e., 
$\{g\in G~|~gU\subseteq Ug\}=U${\rm )} and $[U:U\cap(gUg^{-1})]=\infty$, 
unless $g\in U$. Then $\End_{K\langle G\rangle}(K\langle G/U\rangle)=K^U$ is a field, 
so $K\langle G/U\rangle$ is indecomposable. \end{lemma}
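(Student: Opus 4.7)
The plan is to identify $\End_{K\langle G\rangle}(K\langle G/U\rangle)$ with the space $(K\langle G/U\rangle)^U$ of $U$-fixed vectors and then to use the hypotheses on $U$ to collapse this space to $K^U\cdot[U]$.

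First, since the cyclic vector $[U]$ generates $K\langle G/U\rangle$ over $K\langle G\rangle$ and is fixed by $U$, any endomorphism $\phi$ is determined by its value $\phi([U])\in(K\langle G/U\rangle)^U$; conversely, any $U$-invariant vector $v$ extends to a well-defined $K\langle G\rangle$-endomorphism via $[gU]\mapsto gv$ (the $U$-invariance of $v$ is precisely what makes this independent of the choice of coset representative $g$, and semilinearity is then automatic). This sets up a $K^U$-linear bijection $\End_{K\langle G\rangle}(K\langle G/U\rangle)\cong(K\langle G/U\rangle)^U$.

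Next I would show $(K\langle G/U\rangle)^U=K^U\cdot[U]$. Given $v=\sum_{i}a_i[g_iU]$ fixed by $U$, with distinct cosets $g_iU$ and nonzero $a_i$, the identity $v=u\cdot v=\sum_i a_i^u[ug_iU]$ forces the support $\{g_iU\}$ to be stable under left multiplication by $U$, hence to be a finite disjoint union of finite $U$-orbits on $G/U$. But the $U$-orbit of $gU$ has cardinality $[U:U\cap gUg^{-1}]$, which by hypothesis is infinite unless $g\in U$. The only finite orbit is therefore $\{[U]\}$, so $v=a[U]$ for some $a\in K$, and comparing with $u\cdot v=a^u[U]$ forces $a\in K^U$. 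Combining the two steps gives $\End_{K\langle G\rangle}(K\langle G/U\rangle)\cong K^U$, which is a field since $U$ acts by field automorphisms; in particular the endomorphism ring has no nontrivial idempotent, so $K\langle G/U\rangle$ is indecomposable.

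The delicate step, and the only one that really uses the hypotheses, is the support analysis in the second paragraph. Once one knows that the only finite $U$-orbit on $G/U$ is $\{[U]\}$---a direct consequence of $[U:U\cap gUg^{-1}]=\infty$ for $g\notin U$, with the condition $(G/U)^U=\{[U]\}$ handling the fixed-point (size-one orbit) case---the remainder is routine bookkeeping.
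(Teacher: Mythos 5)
Your argument is correct and is essentially the same as the paper's: both identify $\End_{K\langle G\rangle}(K\langle G/U\rangle)$ with the $U$-invariants $(K\langle G/U\rangle)^U$ and then observe that the $U$-orbit of $[gU]$ in $G/U$ has size $[U:U\cap gUg^{-1}]$, which is infinite for $g\notin U$, so any $U$-invariant element (being a finite sum) must be supported on $\{[U]\}$, forcing $(K\langle G/U\rangle)^U=K^U$. The paper phrases this as a direct sum decomposition $K^U\oplus(K\langle(G\smallsetminus U)/U\rangle)^U$ with the second summand vanishing, but this is the same orbit-counting idea you spell out.
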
 
\begin{proof} Indeed, $\End_{K\langle G\rangle}(K\langle G/U\rangle)=(K\langle G/U\rangle)^U=K^U
\oplus(K\langle(G\smallsetminus U)/U\rangle)^U$. As $U(gUg^{-1})$ 
consists of $[U:U\cap(gUg^{-1})]$ classes in $G/(gUg^{-1})$, 
we see that $(K\langle(G\smallsetminus U)/U\rangle)^U=0$. \end{proof} 

{\sc Examples.} 1. Let $\Psi$ be an infinite set, possibly endowed with 
a structure of a projective space. Let $G$ be the group of automorphisms 
of $\Psi$, respecting the structure, if any. Let $J$ be the $G$-closure 
of a finite subset in $\Psi$, i.e., a finite subset or a finite-dimensional 
subspace. Let $U$ be the stabilizer of $J$ in $G$. Then $G/U$ is identified 
with the set of all $G$-closed subsets in $\Psi$ of the same length as $J$. 

2. By Lemma \ref{indecomp}, $K\langle G/U\rangle$ is indecomposable in the following 
examples: \begin{enumerate} \item $G$ is the group of projective 
automorphisms of an infinite projective space $\Psi$ (i.e., either 
$\Psi$ is infinite-dimensional, or $\Psi$ is defined over an infinite 
field), $U$ is the setwise stabilizer in $G$ of a finite-dimensional 
subspace $J\subseteq\Psi$. Then $G/U$ is identified with the 
Grassmannian of all subspaces in $\Psi$ of the same dimension as $J$.  
\item $G$ is the group of permutations of an infinite set $\Psi$, 
$U$ is the stabilizer in $G$ of a finite subset $J\subset\Psi$. 
Then $G/U$ is identified with the set $\binom{\Psi}{\#J}$ of all 
subsets in $\Psi$ of order $\#J$. 
\item $G$ is the automorphism group of an algebraically closed extension 
$F$ of a field $k$, $U$ is the stabilizer in $G$ of an algebraically 
closed subextension $L|k$ of finite transcendence degree. Then $G/U$ is 
identified with the set of all subextensions in $F|k$ isomorphic to $L|k$. 
\end{enumerate} 

\begin{lemma} \label{restriction} Let $G$ be a permutation group of a set, $A$ 
be an associative ring endowed with a smooth $G$-action and $U\subseteq G$ 
be an open subgroup. Then any smooth $A\langle G\rangle$-module is also 
smooth when considered as an $A\langle U\rangle$-module. Suppose that 
the set $U\backslash G/U'$ is finite for any open subgroup 
$U'\subseteq G$. Then the restriction of any smooth finitely generated 
$A\langle G\rangle$-module to $A\langle U\rangle$ 
is a finitely generated $A\langle U\rangle$-module. \end{lemma}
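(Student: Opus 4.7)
The smoothness assertion is essentially immediate. If $V$ is a smooth $A\langle G\rangle$-module, then for every $v\in V$ the $G$-stabilizer $\mathrm{Stab}_G(v)$ is open in $G$, so its intersection with the open subgroup $U$ is open in $U$. Applying the same observation to the smooth $G$-action on $A$ shows that the restricted $U$-action on $A$ is again smooth. Together these mean $V$, viewed via the inclusion $A\langle U\rangle\hookrightarrow A\langle G\rangle$, is a smooth $A\langle U\rangle$-module.

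For the second assertion the plan is to reduce generation over $A\langle U\rangle$ to a choice of double-coset representatives. Let $v_1,\dots,v_n$ be a set of generators of $V$ over $A\langle G\rangle$ and denote by $U_i\subseteq G$ the (open) stabilizer of $v_i$. By the hypothesis the set $U\backslash G/U_i$ is finite for each $i$; fix representatives $g_{i,1},\dots,g_{i,m_i}\in G$. I claim the finite family $\{g_{i,j}v_i\}_{i,j}$ generates $V$ as an $A\langle U\rangle$-module.

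The key computation is routine: any $g\in G$ can be written $g=u\,g_{i,j}\,h$ with $u\in U$ and $h\in U_i$, so $g\cdot v_i=u\,g_{i,j}\,h\cdot v_i=u\,g_{i,j}\cdot v_i$ because $h$ fixes $v_i$. Hence an arbitrary element $a[g]\cdot v_i\in A\langle G\rangle\cdot v_i$ equals $(a[u])\cdot(g_{i,j}v_i)$, which lies in $A\langle U\rangle\cdot g_{i,j}v_i$. Summing over $i$ shows that $V=\sum_{i,j}A\langle U\rangle\cdot g_{i,j}v_i$, proving finite generation. There is no real obstacle here beyond bookkeeping; the whole argument hinges on (a) openness of $U_i$ (so that the double-coset set is indexed correctly) and (b) the finiteness hypothesis $\#(U\backslash G/U_i)<\infty$, which is precisely what converts the orbit decomposition into finitely many $A\langle U\rangle$-generators.
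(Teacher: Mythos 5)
Your proof is correct and follows essentially the same route as the paper: the finiteness of $U\backslash G/U_i$ converts each $G$-orbit into finitely many $U$-orbits, which is exactly what produces the finite set of $A\langle U\rangle$-generators. The only cosmetic difference is that you argue directly with a finite generating set and the stabilizers $U_i$, whereas the paper first reduces to the permutation modules $A\langle G/U'\rangle$ and decomposes them as $\bigoplus_i A\langle U/(U\cap\alpha_iU'\alpha_i^{-1})\rangle$.
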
 
\begin{proof} The $A\langle G\rangle$-modules $A\langle G/U'\rangle$ for all open 
subgroups $U'$ of $G$ form a generating family of the category of 
smooth $A\langle G\rangle$-modules. It suffices, thus, to check 
that $A\langle G/U'\rangle$ is a finitely generated $A\langle U\rangle$-module for all open 
subgroups $U'$ of $G$. Choose representatives $\alpha_i\in G/U'$ of 
the elements of $U\backslash G/U'$. Then $G/U'=\coprod_iU\alpha_i$, 
so $A\langle G/U'\rangle\cong\bigoplus_iA\langle U/(U\cap\alpha_iU'\alpha_i^{-1})\rangle$ 
is a finitely generated $A\langle U\rangle$-module. \end{proof} 

{\sc Examples.} 1. The finiteness assumption of Lemma \ref{restriction} 
is valid for any open subgroup $G$ of $\Sy_{\Psi}$ or of the automorphism group of 
an infinite-dimensional vector space over a finite field, 
as well as for any compact group $G$. 

2. The restriction functor splits the indecomposable generators 
into finite direct sums of indecomposable generators via canonical 
isomorphisms of $A\langle G_J\rangle$-modules $A\langle\binom{\Psi}{t}_q\rangle
=\bigoplus_{\Lambda\subseteq J}M_{\Lambda}$, where $M_{\Lambda}$ is 
the free $A$-module on 
the set of all subobjects of $\Psi$ of length 
$t$ and meeting $J$ along $\Lambda$. 

\vspace{4mm} 

In the following result, our principal examples of the ring $A$ will be 
division rings endowed with an $\Sy_{\Psi}$-action, though localization of 
$\mathbb Z[x~|~x\in\Psi]$ at all non-constant indecomposable polynomials 
gives one more example. 
\begin{proposition} \label{noetherian-generators} Let $A$ be an associative 
left noetherian ring endowed with an arbitrary $\Sy_{\Psi}$-action. 
Then the left $A\langle U\rangle$-module $A\langle\Psi^s\rangle$ is noetherian for 
any integer $s\ge 0$ and any open subgroup $U\subseteq\Sy_{\Psi}$. If 
the $\Sy_{\Psi}$-action on $A$ is smooth then any smooth finitely generated 
$A\langle\Sy_{\Psi}\rangle$-module is noetherian. \end{proposition}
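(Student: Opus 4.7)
The plan is to establish the two statements of the proposition in sequence: first reduce the claim about $A\langle\Psi^s\rangle$ to the case $U=\Sy_\Psi$, then prove the reduced statement by a Higman-style well-partial-ordering argument that amounts to the Church--Ellenberg--Farb--Sam--Snowden noetherianness theorem for free FI-modules, and finally deduce the second statement from the first by a standard generators-and-relations argument.

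\textbf{Reductions for the first statement.} By Lemma \ref{open-subgrps-descr}, every open subgroup $U\subseteq\Sy_\Psi$ contains $\Sy_{\Psi|J}$ for some finite $J\subset\Psi$; since any $A\langle U\rangle$-submodule is automatically an $A\langle\Sy_{\Psi|J}\rangle$-submodule, noetherianness over the latter implies noetherianness over the former. Because $\Sy_{\Psi|J}$ fixes $J$ pointwise and acts on $\Psi\setminus J$ as $\Sy_{\Psi\setminus J}$, decomposing $\Psi^s$ according to which coordinates lie in $J$ expresses $A\langle\Psi^s\rangle$ as a finite direct sum (over $I\subseteq\{1,\dots,s\}$ and maps $I\to J$) of modules $A\langle(\Psi\setminus J)^{s-|I|}\rangle$, each carrying the natural $A\langle\Sy_{\Psi\setminus J}\rangle$-action. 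Applied to the infinite set $\Psi\setminus J$, this reduces the first statement to the following core claim: for every infinite set $\Psi$ and every $s\ge 0$, $A\langle\Psi^s\rangle$ is a noetherian $A\langle\Sy_\Psi\rangle$-module.

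\textbf{Core noetherianness (main obstacle).} Decompose $\Psi^s$ into its finitely many $\Sy_\Psi$-orbits, indexed by set partitions of $\{1,\dots,s\}$: the orbit corresponding to a partition with $r$ blocks is $\Sy_\Psi$-equivariantly isomorphic to $\Sy_\Psi/\Sy_{\Psi|T}$ for any $T\subset\Psi$ of cardinality $r\le s$. Thus $A\langle\Psi^s\rangle$ is a finite direct sum of orbit modules $A\langle\Sy_\Psi/\Sy_{\Psi|T}\rangle$, and it suffices to prove each such summand is noetherian. Via Lemma \ref{iso-restr} the module $A\langle\Sy_\Psi/\Sy_{\Psi|T}\rangle$ corresponds to the sheaf of ordered distinct tuples $S\mapsto A\langle S^{\underline{|T|}}\rangle$ on $\mathrm{FinEmb}$, and the claim becomes noetherianness of the free FI-module of degree $|T|$ over the noetherian ring $A$. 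I would proceed by induction on $|T|$: the base $|T|=0$ is noetherianness of $A$ as an $A$-module. For the step, fix once and for all a well-ordering of $\Psi$ and assign to each ordered distinct tuple the leading datum consisting of its minimal coordinate (with its position and value) together with the tuple of the remaining coordinates; the latter lives in a sheaf of the same form with $|T|$ replaced by $|T|-1$. Any ascending chain of submodules projects to an ascending chain in the smaller-degree sheaf, which stabilizes by induction, while the residual data at each level is controlled by finitely many finitely generated $A$-modules that stabilize by left noetherianness of $A$. Making this Higman--Dickson well-partial-ordering extraction rigorous in the semilinear setting (so that stabilization of leading data lifts to stabilization of the original chain) is the technical heart of the proof and the main obstacle.

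\textbf{Second statement from the first.} Let $V$ be a smooth finitely generated $A\langle\Sy_\Psi\rangle$-module with generators $v_1,\dots,v_n$. Smoothness of $V$ gives open stabilizers $U_i=\mathrm{Stab}_{\Sy_\Psi}(v_i)$, and each $U_i$ contains some $\Sy_{\Psi|T_i}$ by Lemma \ref{open-subgrps-descr}. The canonical $A\langle\Sy_\Psi\rangle$-linear surjections $A\langle\Sy_\Psi/\Sy_{\Psi|T_i}\rangle\twoheadrightarrow A\langle\Sy_\Psi/U_i\rangle\to\langle v_i\rangle\subseteq V$ assemble into a surjection $\bigoplus_{i=1}^n A\langle\Sy_\Psi/\Sy_{\Psi|T_i}\rangle\twoheadrightarrow V$. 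Each summand is a direct summand of $A\langle\Psi^{|T_i|}\rangle$ (the orbit of ordered distinct tuples), hence noetherian by the first statement; finite direct sums and quotients of noetherian modules are noetherian, so $V$ is noetherian. The smoothness hypothesis on the $\Sy_\Psi$-action on $A$ enters by ensuring that each $A\langle\Sy_\Psi/\Sy_{\Psi|T_i}\rangle$ is itself a smooth $A\langle\Sy_\Psi\rangle$-module, so that the presentation stays inside the category of smooth modules.
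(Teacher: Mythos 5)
Your framing reductions are fine and agree with the paper: restricting to $\Sy_{\Psi|J}\subseteq U$ only strengthens the noetherianity statement, and your deduction of the second assertion from the first (present a smooth finitely generated module as a quotient of finitely many $A\langle\Sy_{\Psi}/\Sy_{\Psi|T_i}\rangle$, each a direct summand of $A\langle\Psi^{\#T_i}\rangle$) is exactly how the paper argues. The problem is the core claim: noetherianity of $A\langle\Psi^s\rangle$ over $A\langle\Sy_{\Psi}\rangle$ \emph{is} the proposition, and you do not prove it. You recast it as local noetherianity of free FI-modules and then sketch a ``leading datum'' extraction which you yourself flag as the unresolved technical heart. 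As written the sketch does not work: deleting the minimal coordinate of a tuple is not an additive, let alone equivariant, operation on elements of $A\langle\Psi^s\rangle$, so an ascending chain of submodules does not ``project'' to a chain of submodules in the lower-degree sheaf, and the assertion that ``the residual data is controlled by finitely many finitely generated $A$-modules'' has no precise meaning; this is precisely where the known proofs of FI-noetherianity (Higman/Gr\"obner arguments, or the Church--Ellenberg--Farb--Nagpal shift argument) do their real work. There is also a mismatch in generality: the first statement allows an \emph{arbitrary}, possibly non-smooth, $\Sy_{\Psi}$-action on $A$, in which case $A\langle\Psi^s\rangle$ is not a smooth module, Lemma \ref{iso-restr} (which concerns smooth objects with constant or $F_{\Psi}$-coefficients) gives no dictionary with sheaves on $\mathrm{FinEmb}$, and the untwisted FI-module theorem does not literally apply to the twisted group ring $A\langle\Sy_{\Psi}\rangle$.

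For comparison, the paper's proof is elementary, self-contained, and insensitive to the action on $A$: it proves by induction on $s$ that every $A\langle U\rangle$-submodule $M\subseteq A\langle\Psi^s\rangle$, $U=\Sy_{\Psi|S}$, is finitely generated. Fix $I_0\subset\Psi\smallsetminus S$ with $\#I_0=s$ and let $\pi_0:A\langle\Psi^s\rangle\to A\langle I_0^s\rangle$ kill all tuples meeting $\Psi\smallsetminus I_0$. Since $A$ is left noetherian and $I_0^s$ is finite, $\pi_0(M)$ is generated by the images of finitely many $\alpha_1,\dots,\alpha_N\in M$, supported in $A\langle I^s\rangle$ for a finite $I$. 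Putting $J=(I\cup S)\smallsetminus I_0$ and using that $\Sy_{\Psi|S\cup J}$ acts transitively on injective $s$-tuples in $\Psi\smallsetminus J$, one gets $M\subseteq\sum_j A\langle U\rangle\alpha_j+\sum_{\gamma}A\langle\Psi^s_{\gamma}\rangle$, where $\gamma=(j,x)$ runs over $\{1,\dots,s\}\times J$ and $\Psi^s_{\gamma}=\{x_j=x\}\cong\Psi^{s-1}$ as a smooth $\Sy_{\Psi|J}$-set. Hence $M/\sum_j A\langle U\rangle\alpha_j$ is a subquotient of a finite direct sum of copies of $A\langle\Psi^{s-1}\rangle$, which is noetherian by the induction hypothesis, so $M$ is finitely generated. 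No well-ordering of $\Psi$, no Higman-type lemma, and no appeal to external FI-module theorems is needed; supplying an argument of this kind (or a complete Gr\"obner-style proof valid for the twisted ring) is what your proposal is missing.
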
 
\begin{proof} We need to show that any $A\langle U\rangle$-submodule 
$M\subset A\langle\Psi^s\rangle$ is finitely generated for all $U=\Sy_{\Psi|S}$ 
with finite $S\subset\Psi$. We proceed by induction on $s\ge 0$, 
the case $s=0$ being trivial. Assume that $s>0$ and the 
$A\langle U\rangle$-modules $A\langle\Psi^j\rangle$ are noetherian for all $j<s$. 
Fix a subset $I_0\subset\Psi\smallsetminus S$ of order $s$. 

Let $M_0$ be the image of $M$ under the $A$-linear projector 
$\pi_0:A\langle\Psi^s\rangle\to A\langle I_0^s\rangle\subset A\langle\Psi^s\rangle$ 
omitting all $s$-tuples containing elements other than those of $I_0$. As $A$ 
is noetherian and $I_0^s$ is finite, the $A$-module $M_0$ is finitely generated. 
Let the $A$-module $M_0$ be generated by the images of some elements 
$\alpha_1,\dots,\alpha_N\in M\subseteq A\langle\Psi^s\rangle$. Then 
$\alpha_1,\dots,\alpha_N$ belong to the $A$-submodule $A\langle I^s\rangle$ of 
$A\langle\Psi^s\rangle$ for some finite subset $I\subset\Psi$. 

Let $J\subset I\cup S$ be the complement to $I_0$. For each pair $\gamma=(j,x)$, where 
$1\le j\le s$ and $x\in J$, set $\Psi^s_{\gamma}:=\{(x_1,\dots,x_s)\in\Psi^s~|~x_j=x\}$. 
This is a smooth $\Sy_{\Psi|J}$-set. Then the set $\Psi^s$ is the union of the 
$\Sy_{\Psi|J}$-orbit consisting of $s$-tuples of pairwise distinct elements of 
$\Psi\smallsetminus J$ and of a finite union of $\Sy_{\Psi|J}$-orbits embeddable into 
$\Psi^{s-1}$: $\bigcup_{\gamma}\Psi^s_{\gamma}\cup\bigcup_{1\le i<j\le s}\Delta_{ij}$, 
where $\Delta_{ij}:=\{(x_1,\dots,x_s)\in\Psi^s~|~x_i=x_j\}$ are diagonals. 

As (i) $M_0\subseteq\sum_{j=1}^NA\alpha_j+
\sum_{\gamma\in\{1,\dots,s\}\times J}A\langle\Psi^s_{\gamma}\rangle$, (ii) 
$g(M_0)\subset A\langle\Psi^s\rangle$ is determined by $g(I_0)$, (iii) for any 
$g\in U$ such that $g(I_0)\cap J=\varnothing$ there exists 
$g'\in U_J$ with $g(I_0)=g'(I_0)$ ($U_J$ acts transitively 
on the $s$-configurations in $\Psi\smallsetminus J$), 
one has inclusions of $A\langle U_J\rangle$-modules 
\[\sum_{j=1}^NA\langle U\rangle\alpha_j\subseteq M\subseteq\sum_{g\in U}
g(M_0)\subseteq\sum_{g\in U_J}g(M_0)
+\sum_{\gamma\in\{1,\dots,s\}\times J}A\langle\Psi^s_{\gamma}\rangle.\] 
On the other hand, $g(M_0)\subseteq g(\sum_{j=1}^NA\alpha_j)+
\sum_{\gamma\in\{1,\dots,s\}\times J}A\langle\Psi^s_{\gamma}\rangle$ 
for $g\in U_J$, 
and therefore, the $A\langle U_J\rangle$-module 
$M/\sum_{j=1}^NA\langle U\rangle\alpha_j$ becomes a subquotient of the 
noetherian, by the induction assumption, $A\langle U_J\rangle$-module 
$\sum_{\gamma\in\{1,\dots,s\}\times J}A\langle\Psi^s_{\gamma}\rangle$, so 
the $A\langle U_J\rangle$-module $M/\sum_{j=1}^NA\langle U\rangle\alpha_j$ 
is finitely generated, and thus, $M$ is finitely generated as well. \end{proof} 

As a corollary we get the following 
\begin{theorem} \label{locally-noetherian} Let $A$ be a left noetherian associative 
ring endowed with a smooth $\Sy_{\Psi}$-action. Then any smooth finitely generated 
left $A\langle\Sy_{\Psi}\rangle$-module $W$ is noetherian if considered as a left 
$A\langle U\rangle$-module for any open subgroup $U\subseteq\Sy_{\Psi}$. \end{theorem}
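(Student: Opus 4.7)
The theorem is essentially a direct corollary of Proposition~\ref{noetherian-generators}; the plan is to reduce the noetherianity of $W$ as an $A\langle U\rangle$-module to that of the modules $A\langle\Psi^s\rangle$ by exhibiting $W$ as a subquotient of a finite direct sum of such. Since quotients, submodules, and finite direct sums of noetherian modules are noetherian, this will suffice.

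First, because $W$ is smooth and finitely generated over $A\langle\Sy_{\Psi}\rangle$, fix finitely many generators $w_1,\dots,w_n\in W$ with open stabilizers $U_1,\dots,U_n\subseteq\Sy_{\Psi}$. The evident surjection
\[\bigoplus_{i=1}^nA\langle\Sy_{\Psi}/U_i\rangle\twoheadrightarrow W,\qquad a[gU_i]\mapsto a\cdot gw_i,\]
is a map of $A\langle\Sy_{\Psi}\rangle$-modules, hence also of $A\langle U\rangle$-modules, so it suffices to show that each $A\langle\Sy_{\Psi}/U_i\rangle$ is noetherian over $A\langle U\rangle$. By Lemma~\ref{open-subgrps-descr}, each $U_i$ contains the pointwise stabilizer $\Sy_{\Psi|T_i}$ of a finite subset $T_i\subset\Psi$ of some cardinality $s_i$, so $A\langle\Sy_{\Psi}/U_i\rangle$ is in turn a quotient of $A\langle\Sy_{\Psi}/\Sy_{\Psi|T_i}\rangle$.

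Next, after fixing an ordering $T_i=\{t_1,\dots,t_{s_i}\}$, the $\Sy_{\Psi}$-equivariant map $gU_i\mapsto(gt_1,\dots,gt_{s_i})$ identifies $\Sy_{\Psi}/\Sy_{\Psi|T_i}$ with the $\Sy_{\Psi}$-orbit of injective $s_i$-tuples in $\Psi^{s_i}$. The decomposition of $\Psi^{s_i}$ into $\Sy_{\Psi}$-orbits (indexed by equality patterns of coordinates) then realises $A\langle\Sy_{\Psi}/\Sy_{\Psi|T_i}\rangle$ as an $A\langle\Sy_{\Psi}\rangle$-direct summand of $A\langle\Psi^{s_i}\rangle$, hence also as an $A\langle U\rangle$-direct summand.

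Finally, Proposition~\ref{noetherian-generators} asserts that $A\langle\Psi^{s_i}\rangle$ is noetherian as a left $A\langle U\rangle$-module. Passing successively to the direct summand $A\langle\Sy_{\Psi}/\Sy_{\Psi|T_i}\rangle$, to its quotient $A\langle\Sy_{\Psi}/U_i\rangle$, to the finite direct sum over $i$, and finally to the quotient $W$, preserves noetherianity. The genuine work is carried out in Proposition~\ref{noetherian-generators}; here one is only assembling the standard description of the generators of the smooth $A\langle\Sy_{\Psi}\rangle$-module category via Lemma~\ref{open-subgrps-descr}, and I anticipate no further obstacle beyond verifying these reductions.
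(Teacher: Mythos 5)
Your proposal is correct and takes essentially the same approach as the paper: the paper's proof is the one-liner that $W$ is a quotient of a finite direct sum of modules $A\langle\Psi^m\rangle$, which are noetherian over $A\langle U\rangle$ by Proposition~\ref{noetherian-generators}. You have spelled out the reduction the paper leaves implicit (generators with open stabilizers, Lemma~\ref{open-subgrps-descr} to pass to pointwise stabilizers, and identifying $\Sy_{\Psi}/\Sy_{\Psi|T}$ with the orbit of injective tuples so that $A\langle\Sy_{\Psi}/\Sy_{\Psi|T}\rangle$ is a direct summand of $A\langle\Psi^{|T|}\rangle$), and correctly notes that noetherianity is preserved under direct summands, finite direct sums and quotients; all steps check out.
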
 
\begin{proof} The module $W$ is a quotient of a finite direct 
sum of $A\langle\Psi^m\rangle$ for some integer $m\ge 0$, while $A\langle\Psi^m\rangle$ 
are noetherian by Proposition \ref{noetherian-generators}. \end{proof} 

In particular, the category of smooth $A\langle\Sy_{\Psi}\rangle$-modules 
is locally noetherian, i.e., any smooth finitely generated left 
$A\langle\Sy_{\Psi}\rangle$-module is noetherian. 

\section{Relation between representations of automorphism groups of universal 
domains and of symmetric groups: some examples} 
\subsection{0-cycles and representations} \label{prehist} We keep notations of \S\S\ref{motiv} 
and \ref{substructures}, so $C$ is an algebraically closed extension of infinite transcendence degree 
of an algebraically closed field $k$ of characteristic 0, $\Psi\subset C$ is an infinite transcendence 
base of $C$ over $k$, $G$ is the group of all automorphisms of the field $C$ leaving $k$ fixed and 
$G_{\Psi}$ is the subgroup of $G$ consisting of elements identical on $\Psi$ (or equivalently, on 
$k(\Psi)$). 

Denote by $\mathcal I_G(k)$ the category of smooth $k$-linear representations $V$ of $G$ such that 
$V^{G_L}=V^{G_{L'}}$ for any purely transcendental field subextension $L'|L$ in $C|k$. 

There are some reasons to expect that the following holds 
(\cite[Conjecture on p.513]{pgl}). 
\begin{conjecture} \label{main-conj} Any simple object of $\mathcal I_G(k)$ can be embedded 
into the tensor algebra $\bigotimes_C^{\bullet}\Omega^1_{C|k}$. \end{conjecture}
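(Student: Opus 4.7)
The plan is to reduce Conjecture \ref{main-conj} to the structural results already established for $\Sy_{\Psi}$, by passing back and forth between $k$-linear representations of $G$, $C$-semilinear representations of $G$, and $k(\Psi)$-semilinear representations of $\Sy_{\Psi}$. Given a simple object $W \in \mathcal I_G(k)$, I would first apply Remark \ref{irred-contained-irred-semilin} (with $K = C$) to embed $W$ into some irreducible quotient $V$ of the $C$-semilinear representation $W \otimes_k C$. The condition defining $\mathcal I_G(k)$, namely $W^{G_L} = W^{G_{L'}}$ for purely transcendental extensions $L'|L$, should propagate to $V$ because taking $W$-invariants is a left exact functor and the $C$-coefficients may be absorbed; this is the step that needs the most care, since it requires showing that the ``purely transcendental invariance'' is preserved under the semilinearization-and-quotient operation.

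Once $V$ is obtained, I would consider its Galois-invariants $M := V^{G_{\Psi}}$, which by the discussion in \S\ref{motiv} is a smooth $k(\Psi)$-semilinear representation of $\Sy_{\Psi}$; moreover $V$ is recovered from $M$ by the natural isomorphism $C \otimes_{k(\Psi)} M \stackrel{\sim}{\to} V$ (Hilbert 90 for the algebraically closed extension $C|k(\Psi)$). I would then invoke Theorem \ref{Gabriel-spectrum} (applicable since $k$ is algebraically closed in $k(x)$ and $\mathrm{trdeg}(C|k)$ is arbitrary but the result is used at the level of $\Psi$ through the Galois invariants, with $F = k(x)$ so that $F_{\Psi} = k(\Psi)$): the object $k(\Psi)$ is an injective cogenerator, and every indecomposable injective has the form $k(\Psi)\langle\binom{\Psi}{s}\rangle$ for some integer $s \ge 0$. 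Since $M$ is (nonzero and) finitely generated as it comes from a simple irreducible, Proposition \ref{local-structure} pins down a level $s$, and $M$ embeds into a finite direct sum of $k(\Psi)\langle\binom{\Psi}{s_i}\rangle$'s; simplicity of $M$ should then force the embedding into a single such $k(\Psi)\langle\binom{\Psi}{s}\rangle$.

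Tensoring up, $V$ embeds into $C\langle\binom{\Psi}{s}\rangle = C \otimes_{k(\Psi)} k(\Psi)\langle\binom{\Psi}{s}\rangle$, and by the identification $k(\Psi)\langle\binom{\Psi}{s}\rangle \cong \Omega^s_{k(\Psi)|k}$ of Example \ref{Omega-binom}, this target becomes $C \otimes_{k(\Psi)} \Omega^s_{k(\Psi)|k}$, which maps naturally (and injectively, using the transcendence-base choice) into $\Omega^s_{C|k} = \bigwedge_C^s \Omega^1_{C|k}$. In characteristic zero the antisymmetrization projector realizes $\bigwedge_C^s \Omega^1_{C|k}$ as a direct summand of $(\Omega^1_{C|k})^{\otimes_C s}$, so $V$ embeds into $\bigotimes_C^{\bullet} \Omega^1_{C|k}$, and restricting this embedding to the $k$-linear subrepresentation $W \subseteq V$ completes the proof.

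The main obstacle is the very first step: showing that the defining invariance property of $\mathcal I_G(k)$ survives semilinearization, so that the passage $W \leadsto V \leadsto V^{G_{\Psi}}$ actually lands in something nonzero and tractable. Indeed, one needs to verify that $V^{G_{\Psi}}$ is nonzero (and ideally simple) and that the natural map $W \hookrightarrow V$ meets this invariant subspace nontrivially in a way compatible with the $\Sy_{\Psi}$-structure; this is where the $\mathcal I_G(k)$-hypothesis, rather than arbitrary smoothness, is essential and where the known results in the excerpt fall short of providing an automatic answer. A secondary difficulty is controlling the behavior of simple $C$-semilinear representations: the general theory (Theorem \ref{smooth-simple}) gives cogenerator statements rather than a classification of simples outside the setting of Theorem \ref{Gabriel-spectrum}, so one must check that simplicity is preserved by the Galois-invariants functor when restricted to $\mathcal I_G(k)$-originated objects.
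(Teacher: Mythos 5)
You are trying to prove Conjecture \ref{main-conj}, which the paper does not prove: it is stated as an open conjecture (imported from \cite{pgl}) and motivates the whole study; none of the results in this paper (Theorems \ref{Gabriel-spectrum}, \ref{smooth-simple}, Proposition \ref{local-structure}, etc.) are claimed to settle it. So there is no proof in the paper to compare yours against, and your argument must be judged on its own terms --- and it has a fatal gap beyond the one you flag. The decisive step ``tensoring up'' is not available: an embedding of $M=V^{G_{\Psi}}$ into $k(\Psi)\langle\binom{\Psi}{s}\rangle\cong\Omega^s_{k(\Psi)|k}$ is only a morphism of $k(\Psi)\langle\Sy_{\Psi}\rangle$-modules (at best $\Upsilon$-equivariant, where $\Upsilon=\{g\in G\mid g(\Psi)=\Psi\}$), and the target $C\otimes_{k(\Psi)}\Omega^s_{k(\Psi)|k}$ is not a $G$-stable construction --- $G$ does not preserve the transcendence base $\Psi$. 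To get a $G$-equivariant map $V\to\Omega^s_{C|k}$ you would need the functor $H^0(G_{\Psi},-)$ to be full on the relevant objects, and the paper explicitly shows it is not: $\mathrm{Sym}^2_C\Omega^1_{C|k}$ is a simple smooth $C\langle G\rangle$-module with $H^0(G_{\Psi},\mathrm{Sym}^2_C\Omega^1_{C|k})\cong\Omega^1_{k(\Psi)|k}\oplus\Omega^2_{k(\Psi)|k}$. This same example also kills your intermediate claim that simplicity of $V$ forces $M$ to be simple (or to embed into a single $k(\Psi)\langle\binom{\Psi}{s}\rangle$): here $V$ is simple but $M$ is decomposable, and the $\Sy_{\Psi}$-level projection of $M$ onto $\Omega^2_{k(\Psi)|k}$ does not come from any $G$-map out of $\mathrm{Sym}^2_C\Omega^1_{C|k}$ into $\Omega^2_{C|k}$.

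There is also a structural reason your outline cannot close: as you admit, it never uses the defining condition of $\mathcal I_G(k)$ in an essential way, only smoothness/admissibility of a simple quotient of $W\otimes_kC$. But the paper's remark in \S\ref{prehist} records that there is a continuum of irreducible smooth $C$-semilinear representations of $G$ besides the direct summands of $\bigotimes_C^{\bullet}\Omega^1_{C|k}$; hence any argument that would embed an arbitrary such simple quotient into the tensor algebra proves too much and must break somewhere --- concretely, at the lifting step above. What the available results do give is only the $\Sy_{\Psi}$-level statement (the proposition on admissible modules, showing $H^0(G_{\Psi},V)$ is a sum of $\Omega^i_{k(\Psi)|k}$'s); bridging from that back to a $G$-equivariant embedding over $C$ is precisely the open content of the conjecture, expected to require the $\mathcal I_G(k)$ hypothesis (e.g.\ via the connection formalism of the last subsection), and is not supplied by your proposal.
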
 

This conjecture has consequences for the Chow groups $CH_0(-)^0$ of 0-cycles of degree 0. 
\begin{corollary}[\cite{pgl}, Corollary 7.9; \cite{MMJ}, Corollary 3.2] Assume that 
Conjecture~\ref{main-conj} holds and a rational map $f:Y\dasharrow X$ of smooth proper 
$k$-varieties induces injections $\Gamma(X,\Omega^q_{X|k})\to\Gamma(Y,\Omega^q_{Y|k})$ 
for all $q\ge 0$. Then $f$ induces a surjection $CH_0(Y)\to CH_0(X)$. 

If $\Gamma(X,\Omega^q_{X|k})=0$ for all $q\ge 2$ then the Albanese map induces an isomorphism 
between $CH_0(X)^0$ and the group of $k$-points of the Albanese variety of $X$. (The converse 
for `big' fields $k$, due to Mumford, is well-known.) \end{corollary}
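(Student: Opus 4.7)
The overall strategy is to translate each geometric conclusion about $CH_0$ into a statement about smooth $k$-linear $G$-representations in $\mathcal I_G(k)$, and then to invoke Conjecture~\ref{main-conj} to relate those representations to algebraic differential forms on the varieties in question. I would first verify that for any smooth proper $k$-variety $X$, the Chow group $CH_0(X_C)^0$ of degree-zero $0$-cycles on $X_C:=X\times_kC$ is a smooth $k$-linear representation of $G$ lying in $\mathcal I_G(k)$: smoothness is immediate because each cycle class is defined over a finitely generated subextension, and the invariance $V^{G_L}=V^{G_{L'}}$ under purely transcendental $L'|L$ reduces to the $\mathbb A^1$-invariance of $CH_0$, i.e.\ to the rational equivalence of any two $L$-points of $\mathbb A^1_L$ via the projection $X\times\mathbb A^1\to X$.

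For the first assertion, set $V:=\mathrm{coker}\bigl(f_*\colon CH_0(Y_C)^0\to CH_0(X_C)^0\bigr)$, which inherits membership in $\mathcal I_G(k)$. Supposing $V\ne 0$, pick a simple quotient $S$; by Conjecture~\ref{main-conj} there is a non-zero $G$-equivariant embedding $\iota\colon S\hookrightarrow\bigotimes_C^N\Omega^1_{C|k}$. The pivotal step is an identification
\[
\Hom_{k\langle G\rangle}\bigl(CH_0(X_C)^0,\textstyle\bigotimes_C^N\Omega^1_{C|k}\bigr)
\;\cong\;\Gamma\bigl(X,(\Omega^1_{X|k})^{\otimes N}\bigr),
\]
given by evaluation on the generic $0$-cycle $\mathrm{Spec}\,C\to X_C$ together with $G$-descent to pass from a $C$-valued form back to a $k$-algebraic form. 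Under this identification, the composition $CH_0(X_C)^0\twoheadrightarrow V\twoheadrightarrow S\hookrightarrow\bigotimes_C^N\Omega^1_{C|k}$ corresponds to a non-zero form $\omega$ on $X$ whose pullback $f^*\omega$ to $Y$ vanishes, because the composition vanishes on the image of $f_*$; injectivity of $f^*$ on the relevant tensor powers of global forms, which follows from the hypothesis, then forces $\omega=0$ and hence $V=0$.

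For the second assertion, I would apply the same framework to the kernel $T:=\ker\bigl(CH_0(X_C)^0\to\mathrm{Alb}(X)(C)\bigr)$, which again lies in $\mathcal I_G(k)$. Any simple quotient of $T$ embeds by the conjecture into some $\bigotimes_C^N\Omega^1_{C|k}$, and the previous identification, applied now to $X$ itself, shows such maps are controlled by $\Gamma\bigl(X,(\Omega^1_{X|k})^{\otimes N}\bigr)$. The hypothesis $\Gamma(X,\Omega^q_{X|k})=0$ for $q\ge 2$, together with the $\Sy_N$-decomposition of $(\Omega^1)^{\otimes N}$ by Young symmetrizers (each isotypic component being a direct summand of some $\Omega^q$), forces $N\le 1$; the $N=0$ contribution gives only constant maps that vanish on degree-zero cycles, while the $N=1$ contribution factors through the Albanese morphism by its universal property and hence kills $T$. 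Consequently $T=0$, as desired.

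The main obstacle is the Hom-identification itself: showing that every $G$-equivariant morphism from $CH_0(X_C)^0$ into a tensor power of $\Omega^1_{C|k}$ arises from a single $k$-algebraic form on $X$. This is the technical heart of the passage between the representation-theoretic and geometric pictures developed in~\cite{pgl,MMJ}; the subtlety is that $G$-equivariance over the huge group $G=\Aut(C/k)$ must be strong enough to rigidify an a priori $C$-valued assignment on $0$-cycles into a single algebraic form defined over $k$, and it is precisely the strength of this rigidification that converts the conjectural representation-theoretic input into the desired geometric conclusions.
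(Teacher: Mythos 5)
You should note first that the paper does not actually prove this corollary: it is quoted from \cite{pgl} (Cor.\ 7.9) and \cite{MMJ} (Cor.\ 3.2), and your overall plan --- realize $CH_0(X_C)^0$ as an object of $\mathcal I_G(k)$, apply Conjecture~\ref{main-conj} to a simple (sub)quotient, and convert morphisms into tensor powers of $\Omega^1_{C|k}$ into algebraic forms on $X$ --- is indeed the strategy of those cited proofs. However, two of your pivotal steps are wrong as stated. First, the identification $\Hom_{k\langle G\rangle}\bigl(CH_0(X_C)^0,(\Omega^1_{C|k})^{\otimes N}\bigr)\cong\Gamma\bigl(X,(\Omega^1_{X|k})^{\otimes N}\bigr)$ is unjustified for general tensor type: evaluation of a non-alternating form on a $0$-cycle is not known (or expected) to respect rational equivalence, because the trace/residue argument that makes the pairing with $CH_0$ well defined exists only for the exterior powers $\Omega^q$; the identification actually available in \cite{pgl,MMJ} is $\Hom(CH_0(X_C)^0,\Omega^q_{C|k})\cong\Gamma(X,\Omega^q_{X|k})$, and the non-exterior Schur components must be handled separately (this is precisely the ``Galois descent'' issue in \cite{MMJ}). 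Second, your claim that in the Young-symmetrizer decomposition of $(\Omega^1_{C|k})^{\otimes N}$ each component is a direct summand of some $\Omega^q$ is false: for instance $\mathrm{Sym}^2_C\Omega^1_{C|k}$ is a simple $C\langle G\rangle$-module distinct from every $\Omega^q_{C|k}$ --- the present paper uses exactly this fact in its discussion of $H^0(G_{\Psi},-)$. Consequently the deduction ``$N\le 1$'' in your Albanese argument collapses: on your own identification you would need the vanishing of $\Gamma(X,\mathbb S_\lambda\Omega^1_{X|k})$ for all non-exterior $\lambda$ (e.g.\ spaces of symmetric differentials, which can be nonzero even when all $\Gamma(X,\Omega^q_{X|k})$, $q\ge2$, vanish), and this does not follow from the hypothesis. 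So the two errors do not cancel; the real content of the cited results is exactly the control of the non-exterior components, which your sketch assumes away.

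There are further gaps. A nonzero smooth module need not admit a simple quotient (only finitely generated ones do), so from $V\neq0$ or $T\neq0$ you only obtain a simple quotient of a cyclic submodule, i.e.\ a simple subquotient of $CH_0(X_C)^0$; but your Hom-identification requires a morphism defined on all of $CH_0(X_C)^0$ and vanishing on the image of $f_*$ (resp.\ on cycles Albanese-equivalent to zero), so you need either an injectivity property of the targets $\mathbb S_\lambda\Omega^1_{C|k}$ in the relevant category or some other device from the cited papers --- this extension step is not formal. You also leave unaddressed: that $\mathcal I_G(k)$ is closed under subquotients (needed so that Conjecture~\ref{main-conj} applies to $V$, $T$ and their simple subquotients), the descent of the surjectivity conclusion from $C$ back to $k$, and coefficients --- the representation-theoretic argument naturally yields statements with $\mathbb Q$-coefficients, and recovering the integral assertions (in particular killing the torsion in the kernel of the Albanese map) requires an extra input such as Roitman's theorem.
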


\begin{example} Let $r\ge 1$ be an integer and $X$ be a smooth proper $k$-variety with 
$\Gamma(X,\Omega^j_{X|k})=0$ for all $r<j\le\dim X$. Let $Y$ be a sufficiently general 
$r$-dimensional plane section of $X$. Then the inclusion $Y\to X$ induces an injection 
$\Gamma(X,\Omega^{\bullet}_{X|k})\to\Gamma(Y,\Omega^{\bullet}_{Y|k})$. \end{example}

\begin{remark} Though the direct summands of $\bigotimes_C^{\bullet}\Omega^1_{C|k}$ 
are the only known explicit irreducible smooth $C$-semilinear representations of 
$G$, there is a continuum of others, at least if $C$ is countable, cf. \cite[Prop.3.5.2]{RMS}. 
However, Conjecture \ref{main-conj} relates the `interesting' irreducible 
smooth semilinear representations of $G$ to K\"ahler differentials. \end{remark}

\subsection{The functor $H^0(G_{\Psi},-)$} 
As it follows from Proposition~\ref{Satz1}, the functor $H^0(G_{\Psi},-)$ from the category 
of smooth $C\langle G\rangle$-modules to the category of smooth 
$k(\Psi)\langle\Sy_{\Psi}\rangle$-modules is a faithful and exact. 
However, it is not full: $\Omega^1_{C|k}$ and $\mathrm{Sym}^2_C\Omega^1_{C|k}$ are distinct simple 
smooth $C\langle G\rangle$-modules, while $H^0(G_{\Psi},\Omega^1_{C|k})=\Omega^1_{k(\Psi)|k}$ 
and $H^0(G_{\Psi},\mathrm{Sym}^2_C\Omega^1_{C|k})\cong\Omega^1_{k(\Psi)|k}\oplus\Omega^2_{k(\Psi)|k}$. 

Set $\Upsilon:=\{g\in G~|~g(\Psi)=\Psi\}$. 
For any smooth $k(\Psi)$-semilinear representation $V$ of $\Sy_{\Psi}$, $V\otimes_{k(\Psi)}C$ is 
naturally a smooth $C$-semilinear representation of $\Upsilon$. Assume that $W$ is $V\otimes_{k(\Psi)}C$ 
endowed with a smooth $C$-semilinear $G$-action extending the $\Upsilon$-action. It follows from 
\cite[Proposition 2.5]{max} that any open subgroup of $G$ containing $\Upsilon_T$ 
contains the subgroup $G_T$. 

1. Let $W_0:=V^{\Sy_{\Psi}}\otimes_kC$ correspond to the sum of all copies of $k(\Psi)$ in $V$. 
Then $W_0=W^{\Upsilon}\otimes_kC$. On the other hand, any vector of $W^{\Upsilon}$ is 
fixed by an open subgroup of $G$ containing $\Upsilon$, i.e., $W^{\Upsilon}=W^G$, 
and thus, $W_0=W^G\otimes_kC$ is a direct sum of copies of $C$. 

2. If $H^0(G_{\Psi},W)\cong\Omega^i_{k(\Psi)|k}$ then $W\cong\Omega^i_{C|k}$, at least if $k$ 
is the field of algebraic numbers. 
\begin{proof} Indeed, $W$ is admissible and irreducible, so we can apply \cite{adm}. \end{proof} 

\begin{proposition} For any {\sl admissible} $C\langle G\rangle$-module $V$ (i.e., $\dim_{C^U}V^U<\infty$ 
for any open subgroup $U\subset G$) one has $H^0(G_{\Psi},V)\cong
\bigoplus_{i=0}^{\infty}(\Omega^i_{k(\Psi)|k})^{\oplus m_i}$ as 
$k(\Psi)\langle\Sy_{\Psi}\rangle$-modules for some integer $m_i\ge 0$. \end{proposition}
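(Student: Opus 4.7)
The plan is to identify $W := V^{G_\Psi} = H^0(G_\Psi, V)$ as an injective object of the category of smooth $k(\Psi)\langle\Sy_\Psi\rangle$-modules, and then apply the structure theorem coming from Theorem~\ref{Gabriel-spectrum}.

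First I would verify that $W$ is an admissible smooth $k(\Psi)\langle\Sy_\Psi\rangle$-module. Since $G_\Psi = \mathrm{Gal}(C|k(\Psi))$ is profinite (hence precompact), Proposition~\ref{Satz1} yields $V \cong C \otimes_{k(\Psi)} W$ together with the exactness and faithfulness of $H^0(G_\Psi, -)$; in particular $W$ is smooth. For a finite $T \subset \Psi$ one has $W^{\Sy_{\Psi|T}} = V^{\Upsilon_T} = V^{G_T}$, the second equality being the already-cited result of [max, Prop.~2.5] that any open subgroup of $G$ containing $\Upsilon_T$ contains $G_T$. Moreover $C^{G_T} = k(T)$: any $\alpha \in \overline{k(T)} \setminus k(T)$ is moved by some element of $\mathrm{Gal}(\overline{k(T)}|k(T))$, which extends to an element of $G_T$ because $C|\overline{k(T)}$ has infinite transcendence degree. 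Admissibility of $V$ therefore gives $\dim_{k(T)} W^{\Sy_{\Psi|T}} = \dim_{C^{G_T}} V^{G_T} < \infty$.

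By Example~\ref{Omega-binom}, $k(\Psi)\langle\binom{\Psi}{s}\rangle \cong \Omega^s_{k(\Psi)|k}$ for all $s \ge 0$, so Theorem~\ref{Gabriel-spectrum} identifies the $\Omega^s_{k(\Psi)|k}$ with the points of the Gabriel spectrum. Since $k(\Psi)$ is a division ring, Theorem~\ref{locally-noetherian} makes the category of smooth $k(\Psi)\langle\Sy_\Psi\rangle$-modules locally noetherian, so by Gabriel's structure theorem every injective object decomposes uniquely as $\bigoplus_{s \ge 0}(\Omega^s_{k(\Psi)|k})^{\oplus m_s}$ for suitable cardinals $m_s$. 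Once $W$ is shown to be injective, the admissibility bound $\dim_{k(T)} W^{\Sy_{\Psi|T}} = \sum_s m_s \binom{|T|}{s}$ for every finite $T \subset \Psi$ forces each $m_s$ to be a finite non-negative integer (take $|T| = s$ to bound $m_s$ from above).

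The main step---and the main obstacle---is thus proving that $W$ is injective. My plan is to proceed inductively. By Theorem~\ref{smooth-simple} the unique smooth simple $k(\Psi)\langle\Sy_\Psi\rangle$-module is $k(\Psi) = \Omega^0_{k(\Psi)|k}$, and admissibility forces $\mathrm{socle}(W) \cong k(\Psi)^{m_0}$ with $m_0 = \dim_k V^G < \infty$; by Proposition~\ref{injectivity-K} this socle splits off as an injective direct summand. For the socle-free complement I would extract level-$s$ summands one by one using the local structure from Proposition~\ref{local-structure}: a finitely generated submodule is a sum of copies of $k(\Psi)\langle\binom{\Psi \setminus J}{s}\rangle$ after restriction to some $\Sy_{\Psi|J}$, and these pieces assemble $\Sy_\Psi$-equivariantly into copies of $\Omega^s_{k(\Psi)|k}$, which are themselves injective by Corollary~\ref{some-injective} and hence split off. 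The delicate point is that for $s \ge 1$ the module $\Omega^s_{k(\Psi)|k}$ has zero socle, so the standard socle filtration does not detect higher-level summands; an alternative is to show directly that $\mathrm{Ext}^1(W', W) = 0$ for every finitely generated smooth $W'$, reducing via Proposition~\ref{local-structure} to explicit $\mathrm{Ext}^1$ computations on the generators $k(\Psi)\langle\binom{\Psi \setminus J}{s}\rangle$.
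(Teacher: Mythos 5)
Your preliminary reductions are fine: $W:=H^0(G_{\Psi},V)$ is smooth, $W^{\Sy_{\Psi|T}}=V^{G_T}$ is finite-dimensional over $k(T)$, the indecomposable injectives are the $k(\Psi)\langle\binom{\Psi}{s}\rangle\cong\Omega^s_{k(\Psi)|k}$, and once $W$ is known to be a direct sum of these the admissibility bound does force finite multiplicities. But the entire content of the proposition sits in the step you leave open, and the routes you sketch for it cannot close it, because they use nothing about $W$ beyond its being a smooth \emph{admissible} $k(\Psi)\langle\Sy_{\Psi}\rangle$-module --- and that is not enough. Let $M$ be the kernel of the augmentation $k(\Psi)\langle\Psi\rangle\to k(\Psi)$, $[x]\mapsto 1$. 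It is finitely generated (Theorem~\ref{locally-noetherian}), hence admissible by the corollary to Proposition~\ref{local-structure}, and its restriction to $\Sy_{\Psi|\{x_0\}}$ is isomorphic to $k(\Psi)\langle\Psi\smallsetminus\{x_0\}\rangle$, i.e.\ it has exactly the local shape you invoke; yet $\dim_{k(T)}M^{\Sy_{\Psi|T}}=\#T-1$, which cannot be written as $\sum_s m_s\binom{\#T}{s}$ with $m_s\in\mathbb Z_{\ge 0}$, so $M$ is neither a direct sum of the $\Omega^s_{k(\Psi)|k}$ nor injective (the extension $0\to M\to k(\Psi)\langle\Psi\rangle\to k(\Psi)\to 0$ is non-split since $k(\Psi)\langle\Psi\rangle^{\Sy_{\Psi}}=0$). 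Hence the claim that the local pieces ``assemble $\Sy_{\Psi}$-equivariantly into copies of $\Omega^s$'' is false at this level of generality, and the fallback $\mathrm{Ext}^1(W',W)=0$ fails for the same reason: any correct proof must exploit that $W$ comes from an admissible module over the full group $G$, not merely the dimension bounds this provides.

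That extra input is exactly what the paper uses and what your plan is missing. After reducing to cyclic $V$ (Theorem~\ref{locally-noetherian} together with the injectivity of the $k(\Psi)\langle\binom{\Psi}{i}\rangle$, Corollary~\ref{some-injective}), the paper invokes the structure theorem of \cite{adm} for the admissible semilinear representations $V^{G_T}$ of the groups $N_G(G_T)/G_T$ for finite $T\subset\Psi$: each $V^{G_T}$ carries a filtration whose subquotients are direct summands of $k(T)$-tensor powers of $\Omega^1_{k(T)|k}$, compatibly as $T$ grows, so $W=V^{G_{\Psi}}$ acquires an ascending filtration whose subquotients are direct summands of tensor powers of $\Omega^1_{k(\Psi)|k}$. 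An explicit combinatorial isomorphism $k(\Psi)\langle\Psi\rangle^{\otimes_{k(\Psi)}N}\cong\bigoplus_{i=0}^{N}k(\Psi)\langle\binom{\Psi}{i}\rangle^{\oplus a_{i,N}}$, built from elementary symmetric functions, then shows these subquotients are injective, so the filtration splits; the stated decomposition, and a posteriori the injectivity of $W$ you were aiming for, follow. Without this input on $V^{G_T}$ as a representation of the larger groups, nothing in your argument rules out summands such as $M$ above.
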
 
\begin{proof} 
By Corollary \ref{some-injective}, the objects 
$k(\Psi)\langle\binom{\Psi}{i}\rangle$ are injective, so using Theorem \ref{locally-noetherian} and 
identifications $k(\Psi)\langle\binom{\Psi}{i}\rangle\stackrel{\sim}{\longrightarrow}\Omega^i_{k(\Psi)|k}$ 
of Example \ref{Omega-binom}, 
we may assume that $V$ is cyclic. For any finite $T\subset\Psi$ consider the 
$k(T)$-semilinear representation $V^{G_T}$ of the group of $k$-linear automorphisms of 
the $k$-linear span of $T$. As it follows from \cite{adm}, $V^{G_T}$ admits a filtration whose 
quotients are direct summands of $k(T)$-tensor powers (Schur functors) of $\Omega^1_{k(T)|k}$. 
Moreover, for $T\subset T'$ one has $V^{G_T}\otimes_{k(T)}k(T')\subseteq V^{G_{T'}}$ and these 
filtrations are compatible, thus, giving rise to an ascending filtration on $V^{G_{\Psi}}$ whose 
quotients are direct summands of $k(\Psi)$-tensor powers of $\Omega^1_{k(\Psi)|k}$, so it remains 
to show that $k(\Psi)\langle\Psi\rangle^{\otimes_{k(\Psi)}^N}$ is isomorphic to 
$\bigoplus_{i=0}^Nk(\Psi)\langle\binom{\Psi}{i}\rangle^{\oplus a_{i,N}}$ for any 
integer $N\ge 0$, where 
$t^N=:\sum_{i=0}^Na_{i,N}\binom{t}{i}\in\mathbb Z[t]$. 

We proceed by induction on $N$, the cases $N\le 1$ being trivial. For the induction step it suffices 
to construct a bijective morphism \[\alpha:k(\Psi)\langle\binom{\Psi}{N}\rangle^{\oplus N}\oplus 
k(\Psi)\langle\binom{\Psi}{N+1}\rangle^{\oplus(N+1)}\longrightarrow k(\Psi)\langle\binom{\Psi}{N}
\rangle\otimes_{k(\Psi)}k(\Psi)\langle\Psi\rangle=k(\Psi)\langle\binom{\Psi}{N}\times\Psi\rangle.\] 
Denote by $\sigma_s$ the elementary symmetric polynomials and set 
$\alpha([S]_s):=\sum_{t\in S}\sigma_s(S\setminus\{t\})[S,t]$, $0\le s<N$, 
and $\alpha([T]_s):=\sum_{t\in T}\sigma_s(T\setminus\{t\})[T\setminus\{t\},t]$, $0\le s\le N$. 
As the elementary symmetric polynomials are algebraically independent, $\alpha$ is injective. 
The surjectivity follows from the coincidence of $k(T)$-dimensions of $k(T)\langle\binom{T}{N}
\rangle^{\oplus N}\oplus k(T)\langle\binom{T}{N+1}\rangle^{\oplus(N+1)}$ and of $k(T)\langle\binom{T}{N}
\rangle\otimes_{k(T)}k(T)\langle T\rangle$ for all finite subsets $T\subset\Psi$. \end{proof} 

\begin{proposition} Let $W\in\mathcal I_G(k)$. 
Then $H^0(G_{\Psi},W\otimes_kC)$ is injective. \end{proposition}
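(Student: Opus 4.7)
The plan is to combine the preceding proposition---which identifies $H^0(G_{\Psi},V)$ with a direct sum of copies of the modules $\Omega^i_{k(\Psi)|k}$ for admissible $V$---with the injectivity of $\Omega^i_{k(\Psi)|k}\cong k(\Psi)\langle\binom{\Psi}{i}\rangle$ established in Corollary~\ref{some-injective} and with the locally noetherian property of the ambient category from Theorem~\ref{locally-noetherian}. In a locally noetherian Grothendieck category, arbitrary direct sums of injectives are injective and filtered colimits of monomorphisms between injective subobjects are injective; these two facts allow one to assemble finitely generated cases into the general statement.

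First I would reduce to the case of a finitely generated $W$. The defining condition $V^{G_L}=V^{G_{L'}}$ of $\mathcal I_G(k)$ is inherited by subrepresentations, so $W$ is a filtered union of finitely generated subrepresentations $W_{\alpha}\in\mathcal I_G(k)$. Since $(-)\otimes_kC$ preserves filtered colimits and the smooth invariants functor $H^0(G_{\Psi},-)$ commutes with filtered colimits of smooth modules, one obtains $H^0(G_{\Psi},W\otimes_kC)$ as a filtered union of the subobjects $H^0(G_{\Psi},W_{\alpha}\otimes_kC)$; the faithful exactness of $H^0(G_{\Psi},-)$ recalled above ensures that the transition maps are monomorphisms.

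For each finitely generated $W_{\alpha}\in\mathcal I_G(k)$ the aim is to verify that $W_{\alpha}\otimes_kC$ is admissible as a $C\langle G\rangle$-module, so that the preceding proposition yields an isomorphism of smooth $k(\Psi)\langle\Sy_{\Psi}\rangle$-modules $H^0(G_{\Psi},W_{\alpha}\otimes_kC)\cong\bigoplus_{i\ge 0}(\Omega^i_{k(\Psi)|k})^{\oplus m_{i,\alpha}}$. Each $\Omega^i_{k(\Psi)|k}$ is injective by Corollary~\ref{some-injective} and Example~\ref{Omega-binom}, and Theorem~\ref{locally-noetherian} applied with $A=k(\Psi)$ then gives injectivity of their direct sum. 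Passing to the filtered union over $\alpha$, the same local noetherianity yields injectivity of $H^0(G_{\Psi},W\otimes_kC)$.

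The hard part will be the admissibility of $W_{\alpha}\otimes_kC$ for finitely generated $W_{\alpha}\in\mathcal I_G(k)$: a bound $\dim_{C^U}(W_{\alpha}\otimes_kC)^U<\infty$ for each open $U\subset G$ does not follow from finite generation alone, and establishing it will require exploiting the birational-invariance property $W_{\alpha}^{G_L}=W_{\alpha}^{G_{L'}}$ defining $\mathcal I_G(k)$, presumably by invoking the structure results for admissible $C\langle G\rangle$-modules from \cite{adm} already used in the proof of the preceding proposition.
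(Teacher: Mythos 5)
Your reduction steps (filtered union of finitely generated subobjects, which stay in $\mathcal I_G(k)$; commutation of $H^0(G_{\Psi},-)$ with such unions; injectivity of direct sums and filtered unions of injectives in the locally noetherian category) are fine, but the step you yourself flag as ``the hard part'' is a genuine gap, and it is the whole content of the statement. Nothing in the paper, nor in the cited tools, gives admissibility of $W_{\alpha}\otimes_kC$ for a finitely generated $W_{\alpha}\in\mathcal I_G(k)$: \cite{adm} classifies representations already known to be admissible and cannot be used to establish admissibility, and finite generation over $C\langle G\rangle$ gives no bound on $\dim_{C^U}(-)^U$ because for $G=\Aut(C|k)$ the double coset spaces $U\backslash G/U'$ are infinite. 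Whether the condition $W^{G_L}=W^{G_{L'}}$ forces admissibility of the semilinearization is precisely the kind of deep structural question the surrounding conjectures are about (note the paper's remark that beyond the admissible examples there is a continuum of further irreducible smooth $C$-semilinear representations), so your plan reduces the proposition to an unestablished, possibly open, claim rather than proving it.

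The paper's proof avoids admissibility altogether and does not reduce to finitely generated modules. It decomposes $W$ into its $G_{\Psi}$-isotypic components $W_{\overline{\rho}}$ and uses the defining property of $\mathcal I_G(k)$ together with \cite[Prop.~2.5]{max} to show that the pointwise stabilizer of each $W_{\overline{\rho}}$ is open in $G$; hence the image $\mathrm{St}_{\overline{\rho}}$ in $\Sy_{\Psi}$ is an open subgroup containing some $\Sy_{\Psi|T_{\overline{\rho}}}$ on which the corresponding piece of $H^0(G_{\Psi},W\otimes_kC)$ is ``trivial''. Lemma~\ref{triv-on-open-subgrp} then upgrades triviality from $\Sy_{\Psi|T_{\overline{\rho}}}$ to $\mathrm{St}_{\overline{\rho}}$, so each orbit contribution $V_O$ is a direct sum of copies of $k(\Psi)\langle\Sy_{\Psi}/\mathrm{St}_{\overline{\rho}}\rangle$, which is injective (by Lemma~\ref{finite-index-split} it is a finite direct sum of copies of $k(\Psi)\langle\binom{\Psi}{s}\rangle$, injective by Corollary~\ref{some-injective}), and local noetherianity (Theorem~\ref{locally-noetherian}) makes the total direct sum injective. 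If you want to salvage your outline, you would have to replace the admissibility claim by an argument of this kind exploiting the openness of stabilizers of isotypic components; as written, the proposal does not close.
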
 
\begin{proof} Let $\Pi$ be the set of isomorphism classes of smooth irreducible representations of 
$G_{\Psi}$. For any $\overline{\rho}\in\Pi$ the subgroup $\ker\overline{\rho}\subset G_{\Psi}$ is 
open, so the subfield $C^{\ker\overline{\rho}}$ is a finite extension of $k(\Psi)$, and thus, it is 
a purely transcendental extension of a subfield $L_{\overline{\rho}}$ finitely generated over $k$. 

Denote by $W_{\overline{\rho}}=\rho\otimes_k\Hom_{G_{\Psi}}(\rho,W)$ the 
$\overline{\rho}$-isotypical part, where $\rho$ is a representation in $\overline{\rho}$, 
so $W=\bigoplus_{\overline{\rho}\in\Pi}W_{\overline{\rho}}$. Then $H^0(G_{\Psi},W\otimes_kC)=
\bigoplus_{\overline{\rho}\in\Pi}H^0(G_{\Psi},W_{\overline{\rho}}\otimes_kC_{\overline{\rho}^{\vee}})
=\bigoplus_OV_O$, where $O$ runs over the $\Sy_{\Psi}$-orbits in $\Pi$ and 
$V_O=\bigoplus_{\overline{\rho}\in O}H^0(G_{\Psi},W_{\overline{\rho}}\otimes_kC_{\overline{\rho}^{\vee}})$. 
For any $\overline{\rho}\in\Pi$ and $g\in G$ one has $g(W_{\overline{\rho}})\subseteq 
g(W^{\ker\overline{\rho}})\subseteq g(W^{G_{L_{\overline{\rho}}}})=W^{G_{g(L_{\overline{\rho}})}}$, 
so the pointwise stabilizer $\mathrm{Stab}_{\overline{\rho}}$ of $W_{\overline{\rho}}$ is open. 

Denote by $\mathrm{St}_{\overline{\rho}}$ the image of $\mathrm{Stab}_{\overline{\rho}}\cap\Upsilon$ 
in $\Sy_{\Psi}$. Then $\mathrm{St}_{\overline{\rho}}$ is an open subgroup of $\Sy_{\Psi}$, i.e., 
$\mathrm{St}_{\overline{\rho}}\supseteq\Sy_{\Psi|T_{\overline{\rho}}}$ for a finite 
$T_{\overline{\rho}}\subset\Psi$ such that $\overline{k(T_{\overline{\rho}})}\supseteq 
L_{\overline{\rho}}$, so $H^0(G_{\Psi},W_{\overline{\rho}}\otimes_kC_{\overline{\rho}^{\vee}})$ 
is a smooth $k(\Psi)$-semilinear representation of $\mathrm{St}_{\overline{\rho}}$ with ``trivial'' 
restriction to $\Sy_{\Psi|T_{\overline{\rho}}}$, i.e., $H^0(G_{\Psi},W_{\overline{\rho}}\otimes_k
C_{\overline{\rho}^{\vee}})=H^0(G_{\Psi},W_{\overline{\rho}}\otimes_k
C_{\overline{\rho}^{\vee}})^{\Sy_{\Psi|T_{\overline{\rho}}}}\otimes_{k(T_{\overline{\rho}})}k(\Psi)$. 
By Lemma~\ref{triv-on-open-subgrp}, the $k(\Psi)\langle\mathrm{St}_{\overline{\rho}}\rangle$-module 
$H^0(G_{\Psi},W_{\overline{\rho}}\otimes_kC_{\overline{\rho}^{\vee}})$ is ``trivial'', i.e., 
$H^0(G_{\Psi},W_{\overline{\rho}}\otimes_kC_{\overline{\rho}^{\vee}})=
H^0(G_{\Psi},W_{\overline{\rho}}\otimes_kC_{\overline{\rho}^{\vee}})^{\Sy_{\Psi|T_{\overline{\rho}}}}
\otimes_{k(\Psi)^{\Sy_{\Psi|T_{\overline{\rho}}}}}k(\Psi)$, 
and therefore, $V_O=H^0(G_{\Psi},W_{\overline{\rho}}\otimes_kC_{\overline{\rho}^{\vee}})
^{\mathrm{St}_{\overline{\rho}}}\otimes_{k(\Psi)^{\mathrm{St}_{\overline{\rho}}}}
k(\Psi)\langle\Sy_{\Psi}/\mathrm{St}_{\overline{\rho}}\rangle$ is a direct sum of 
several copies of $k(\Psi)\langle\Sy_{\Psi}/\mathrm{St}_{\overline{\rho}}\rangle$. \end{proof} 

\begin{lemma} \label{triv-on-open-subgrp} Let $U\subseteq\Sy_{\Psi}$ and $U'\subseteq U$ 
be open subgroups, $V$ be a smooth $k(\Psi)\langle U\rangle$-module such that 
$V=V^{U'}\otimes_{k(\Psi)^{U'}}k(\Psi)$. Then $V=V^U\otimes_{k(\Psi)^U}k(\Psi)$. \end{lemma}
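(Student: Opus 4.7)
The plan is to reduce to Proposition~\ref{Satz1} (finite-group Hilbert~90) via a two-step procedure, combining the finite quotient structure of $U$ with the sheaf description of Lemma~\ref{iso-restr}.

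\textbf{Reductions.} Since $U'$ is open, Lemma~\ref{open-subgrps-descr} gives a finite $T'\subset\Psi$ with $\Sy_{\Psi|T'}\subseteq U'$, and shrinking $U'$ down to $\Sy_{\Psi|T'}$ preserves the hypothesis: the map $V^{U'}\otimes_{k(\Psi)^{U'}}k(\Psi)\to V$ is always injective by Lemma~\ref{inject}, while its image can only grow as $U'$ shrinks. By Lemma~\ref{open-subgrps-descr} again, $U$ corresponds to a pair $(T,H)$ with $T$ finite and $H\subseteq\Aut(T)$; after enlarging $T'$ if necessary we may assume $T\subseteq T'$. Write $U_0:=\Sy_{\Psi|T}$: a normal open subgroup of $U$ of finite index, with $U/U_0\cong H$, and $U'\subseteq U_0$.

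\textbf{Endgame.} Granting the intermediate statement $V=V^{U_0}\otimes_{k(\Psi)^{U_0}}k(\Psi)$, the module $V^{U_0}$ is a smooth semilinear representation of the finite group $U/U_0$ over $k(\Psi)^{U_0}=k(T)$, whose fixed subfield equals $k(\Psi)^U$. By Proposition~\ref{Satz1}, $V^{U_0}=V^U\otimes_{k(\Psi)^U}k(T)$, and tensoring with $k(\Psi)$ over $k(T)$ gives the conclusion.

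\textbf{Main step: $V=V^{U_0}\otimes_{k(T)}k(\Psi)$.} Restrict $V$ to $U_0\cong\Sy_{\Psi\setminus T}$ acting semilinearly on $k(\Psi)=F_{\Psi\setminus T}$ for $F:=k(T)(x)$, and apply Lemma~\ref{iso-restr}: $V$ corresponds to a sheaf $\mathcal F$ of $\mathcal O$-modules on $\mathrm{FinEmb}$ with $\mathcal F(S)=V^{\Sy_{(\Psi\setminus T)|S}}$, so that $\mathcal F(\emptyset)=V^{U_0}$ and $\mathcal F(T'\setminus T)=V^{U'}$. For every finite $S\supseteq T'\setminus T$, the stabilizer $\Sy_{(\Psi\setminus T)|S}$ lies inside $U'$ and therefore fixes $V^{U'}$ pointwise; combining this with the hypothesis $V=V^{U'}\otimes_{k(T')}k(\Psi)$ gives $\mathcal F(S)=V^{U'}\otimes_{k(T')}k(T\cup S)$, i.e.\ $\mathcal F$ is the constant extension of $\mathcal F(T'\setminus T)$ beyond the object $T'\setminus T$. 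The sheaf structure automatically endows $\mathcal F(T'\setminus T)$ with a $\Sy_{T'\setminus T}=\Aut(T'\setminus T)$-action, semilinear over the corresponding action on $k(T')=\mathcal O(T'\setminus T)$ with fixed subfield $k(T)^{\Sy_{T'\setminus T}}\cdot k(T)=k(T)$-algebra of elementary symmetric polynomials in the elements of $T'\setminus T$. Applying Proposition~\ref{Satz1} to this finite-group action yields $\mathcal F(T'\setminus T)=\mathcal F(T'\setminus T)^{\Sy_{T'\setminus T}}\otimes_{k(T')^{\Sy_{T'\setminus T}}}k(T')$. Finally, the sheaf axiom for the covering $\emptyset\leftarrow T'\setminus T$ in $\mathrm{FinEmb}$ identifies the $\Sy_{T'\setminus T}$-invariants of $\mathcal F(T'\setminus T)$ with $\mathcal F(\emptyset)\otimes_{k(T)}k(T')^{\Sy_{T'\setminus T}}$ (the equalizer condition at the pushout $(T'\setminus T)\sqcup(T'\setminus T)$ being forced by the constancy of $\mathcal F$). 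Combining, $\mathcal F(T'\setminus T)=V^{U_0}\otimes_{k(T)}k(T')$, and tensoring with $k(\Psi)$ over $k(T')$ gives the desired equality.

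The main obstacle is the final sheaf-axiom identification: one must carefully unwind the equalizer describing $\mathcal F(\emptyset)$ inside $\mathcal F(T'\setminus T)$, and verify that after base change the constancy of $\mathcal F$ past $T'\setminus T$ forces the image of $\mathcal F(\emptyset)\otimes_{k(T)}k(T')^{\Sy_{T'\setminus T}}$ to exhaust the $\Sy_{T'\setminus T}$-invariants of $\mathcal F(T'\setminus T)$.
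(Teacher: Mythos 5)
Your reductions and your endgame are fine (and the endgame coincides with the paper's last line: once $V=V^{\Sy_{\Psi|T}}\otimes_{k(T)}k(\Psi)$, Proposition~\ref{Satz1} for the finite quotient $U/\Sy_{\Psi|T}$ finishes). The gap is in your ``main step'', precisely at the point you yourself flag: the identification $\bigl(\mathcal F(T'\smallsetminus T)\bigr)^{\Sy_{T'\smallsetminus T}}=\mathcal F(\emptyset)\otimes_{k(T)}k(T')^{\Sy_{T'\smallsetminus T}}$ is not something the sheaf axiom gives you. The sheaf condition for the covering $T'\smallsetminus T\to\emptyset$ only says that $\mathcal F(\emptyset)=V^{\Sy_{\Psi|T}}$ is the equalizer of the two maps $\mathcal F(T'\smallsetminus T)\rightrightarrows\mathcal F\bigl((T'\smallsetminus T)\sqcup(T'\smallsetminus T)\bigr)$, i.e.\ it identifies which vectors of $V^{U'}$ are invariant under elements moving $T'\smallsetminus T$ off itself; it says nothing about those invariants spanning anything. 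Invariance under $\Aut(T'\smallsetminus T)$ (automorphisms of the object) and the equalizer condition (comparison of the two embeddings into the disjoint double) are different conditions, and ``constancy of $\mathcal F$ past $T'\smallsetminus T$'' does not force the one to be generated by the other. The circularity is visible in the simplest case $U=\Sy_{\Psi}$, $T=\emptyset$, $T'=\{a\}$: there $\Sy_{T'\smallsetminus T}$ is trivial, so Proposition~\ref{Satz1} contributes nothing, and your claimed identification reads $V^{\Sy_{\Psi|a}}=V^{\Sy_{\Psi}}\otimes_k k(a)$ --- which is exactly the lemma you are trying to prove. So the whole nontrivial content has been pushed into an asserted step with no argument behind it.

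That missing content is genuinely nontrivial and is where the paper spends its effort: the paper reduces to a cyclic $k(\Psi)\langle U\rangle$-submodule $V'$, uses Roelcke precompactness of $\Sy_{\Psi}$ to see that $V'$ is finitely generated over $k(\Psi)\langle U'\rangle$, hence (being a sum of copies of $k(\Psi)$ over $U'$, by the hypothesis) finite-dimensional over $k(\Psi)$, and then invokes Theorem~\ref{smooth-simple}(ii) --- every finite-length smooth $k(\Psi)$-semilinear representation of $\Sy_{\Psi|T}\cong\Sy_{\Psi\smallsetminus T}$ is a direct sum of copies of $k(\Psi)$ --- to get $V'=(V')^{\Sy_{\Psi|T}}\otimes_{k(T)}k(\Psi)$. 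That theorem rests on the injectivity of $k(\Psi)$ (Proposition~\ref{injectivity-K}, the Hahn-series argument) together with Proposition~\ref{local-structure}; it is not a formal consequence of the sheaf description of Lemma~\ref{iso-restr} plus Hilbert~90 for finite groups. To repair your proof you would either have to reprove this triviality statement for the infinite group $\Sy_{\Psi\smallsetminus T}$ (e.g.\ show the relevant cocycle with values in $\mathrm{GL}_n(k(\Psi))$, trivial on $\Sy_{\Psi|T'}$, is a coboundary), or simply cite Theorem~\ref{smooth-simple} at that point --- which is what the paper does.
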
 
\begin{proof} It suffices to show that 
any cyclic $k(\Psi)\langle U\rangle$-submodule $V'$ of $V$ is a sum of submodules isomorphic 
to $k(\Psi)$. But $V'$ is a finitely generated $k(\Psi)\langle U'\rangle$-module, since 
$\Sy_{\Psi}$ is `{\sl Roelcke precompact}': $V$ is a quotient of 
$k(\Psi)\langle U/U''\rangle=\bigoplus_O(\bigoplus_{x\in O}k(\Psi)\cdot x)$ for an open subgroup 
$U''\subset U$, where $O$ runs over the (finite) set of $U'$-orbits on the set $U/U''$. 
As the finitely generated $k(\Psi)\langle U'\rangle$-module $V'$ is a sum of copies of 
$k(\Psi)$, it is finite-dimensional over $k(\Psi)$. By Lemma~\ref{open-subgrps-descr}, 
$U$ admits a normal subgroup of finite index of type $\Sy_{\Psi|T}$ for a finite 
$T\subset\Psi$. By Theorem~\ref{smooth-simple}, $V'=(V')^{\Sy_{\Psi|T}}\otimes_{k(T)}k(\Psi)$, 
and therefore, $V=V^{\Sy_{\Psi|T}}\otimes_{k(T)}k(\Psi)$; 
by Proposition~\ref{Satz1}, $V^{\Sy_{\Psi|T}}=V^U\otimes_{k(\Psi)^U}k(T)$. \end{proof} 

\begin{remark} It is not true in general that $H^0(G_{\Psi},W)$ is injective, even if $W=V\otimes C$ 
for a $\mathbb Q$-linear smooth representation $V$ of $G$. E.g., if $V$ is the kernel of the degree 
morphism $\mathbb Q\langle C\setminus k\rangle\to\mathbb Q$ then one has an exact sequence 
$0\to H^0(G_{\Psi},W)\to H^0(G_{\Psi},C\langle C\setminus k\rangle)\to k(\Psi)\to 0$, 
which is not split, since $H^0(G_{\Psi},C\langle C\setminus k\rangle)^{\Sy_{\Psi}}=
H^0(G,C\langle C\setminus k\rangle)=0$. 
\end{remark} 

\subsection{Smooth semilinear representations of symmetric groups with quasi-trivial connections} 
For field extensions $K|k$ and $L|K$, a $K$-vector space $V$ and a connection 
$\nabla:V\to V\otimes_K\Omega_{K|k}$, denote by 
$\nabla_L:V\otimes_KL\to V\otimes_K\Omega_{L|k}$ the unique extension of $\nabla$. 
If $V$ is endowed with an action of a group $H$ then a connection on $V$ is called 
a $H$-{\sl  connection} if it commutes with the $H$-action. 

A connection $\nabla:V\to V\otimes_K\Omega_{K|k}$ is called {\sl trivial} 
(resp., {\sl quasi-trivial}) if the natural map $\ker\nabla\otimes_kK\to V$ 
is surjective (resp., if $\nabla_{\overline{K}}$ is trivial). 

If $k$ is algebraically closed and $H$ is a group of automorphisms of $K$ then the functor 
of horizontal sections $\ker\nabla_{\overline{K}}:(V,\nabla)\mapsto\ker\nabla_{\overline{K}}$ 
is an equivalence of categories 
\[\begin{array}{ccc}\left\{\begin{array}{c}\mbox{smooth}\\ 
\mbox{$k\langle\widetilde{H}\rangle$-modules}\end{array}\right\}&
\mathop{\stackrel{\sim}{\longleftarrow}}\limits_{\ker\nabla_{\overline{K}}}&
\left\{\begin{array}{c}\mbox{smooth $K\langle H\rangle$-modules with}\\ 
\mbox{quasi-trivial $H$-connection over $k$}\end{array}\right\}\end{array},\]
where $\widetilde{H}$ is the group of all field automorphisms of $\overline{K}$ inducing elements 
of $H$ on $K$, so $\widetilde{H}$ the extension of $H$ by 
$\mathrm{Gal}(\overline{K}|K)$. The inverse 
functor is given by $W\mapsto((W\otimes_k\overline{K})^{\mathrm{Gal}(\overline{K}|K)},\nabla_W)$, 
where $\nabla_W$ is restriction of the connection on $W\otimes_k\overline{K}$ vanishing on $W$. 

Consider the following diagram of functors. 
\[\begin{array}{ccccc} \left\{\begin{array}{c}\mbox{smooth}\\ 
\mbox{$k\langle G\rangle$-modules}\end{array}\right\}&
\mathop{\stackrel{\sim}{\longrightarrow}}\limits_{\otimes_kC}&
\left\{\begin{array}{c}\mbox{smooth}\\ 
\mbox{$C\langle G\rangle$-modules with}\\ 
\mbox{trivial $G$-connection}\end{array}\right\}&
\stackrel{\mbox{for}}{\longrightarrow}&
\left\{\begin{array}{c}\mbox{smooth}\\ 
\mbox{$C\langle G\rangle$-modules}\end{array}\right\}\\ 
\downarrow\lefteqn{\mbox{restriction}}&&\downarrow\lefteqn{H^0(G_{\Psi},-)}
&&\downarrow\lefteqn{H^0(G_{\Psi},-)}\\ 
\left\{\begin{array}{c}\mbox{smooth}\\ 
\mbox{$k\langle\Upsilon\rangle$-modules}\end{array}\right\}&
\mathop{\stackrel{\sim}{\longleftarrow}}\limits_{\ker\nabla_C}&
\left\{\begin{array}{c}\mbox{smooth $k(\Psi)\langle\Sy_{\Psi}\rangle$-modules}\\ 
\mbox{with quasi-trivial}\\ 
\mbox{$\Sy_{\Psi}$-connection}\end{array}\right\}&
\stackrel{\mbox{for}}{\longrightarrow}&
\left\{\begin{array}{c}\mbox{smooth}\\ 
\mbox{$k(\Psi)\langle\Sy_{\Psi}\rangle$-modules}\end{array}\right\}\\
\downarrow\lefteqn{\mbox{restriction}}&&\\
\left\{\begin{array}{c}\mbox{smooth}\\ 
\mbox{$k\langle G_{\Psi}\rangle$-modules}\end{array}\right\}&&\end{array}\]
By \cite[Lemma 4.14]{max}, restriction to $\mathcal I_G(k)$ of the composition of 
the upper row is fully faithful. 

It is explained in \cite[\S4.5]{RMS}, that some conjectures (a conjectural relation to Chow groups 
of 0-cycles of projective generators of the category $\mathcal I_G(k)$ and the motivic conjectures) 
imply that there are only finitely many (or no) isomorphism classes of simple objects of 
$\mathcal I_G(k)$ containing a given irreducible representation of $G_{\Psi}$. In fact, (in the 
spirit of Howe--Bushnell--Kutzko--et al.) one can expect that any simple object of 
$\mathcal I_G(k)$ is determined uniquely by its restriction to $G_{\Psi}$. 

From this point of view, the smooth $k(\Psi)$-semilinear representations 
of $\Sy_{\Psi}$ with quasi-trivial $\Sy_{\Psi}$-connection should carry 
interesting information on the corresponding simple objects of $\mathcal I_G(k)$. 

\vspace{5mm}

\noindent
{\sl Acknowledgements.} {\small The article was prepared within the framework 
of the Academic Fund Program at the National Research University Higher School 
of Economics (HSE) in 2015--2016 (grant no. 15-01-0100) and supported within 
the framework of a subsidy granted to the HSE by the Government of the Russian 
Federation for the implementation of the Global Competitiveness Program.} 
 
\vspace{5mm}

\end{document}